\numberwithin{equation}{section}
\newcommand{\rr}{\mathbb{R}}
\newcommand{\ga}{\Gamma}
\newcommand{\be}{\begin{eqnarray*}}
\newcommand{\bel}{\begin{eqnarray}}
\newcommand{\ee}{\end{eqnarray*}}
\newcommand{\eel}{\end{eqnarray}}
\newcommand{\ba}{\begin{aligned}}
\newcommand{\ea}{\end{aligned}}
\newcommand{\de}{\Delta}
\newcommand{\al}{\alpha}
\newcommand{\na}{\nabla}
\newcommand{\ep}{\epsilon}
\newcommand{\pa}{\partial}
\newcommand{\CC}{C_{1,\infty}}
\newtheorem{thm}{Theorem}
\newtheorem{cor}{Corollary}
\newtheorem{lem}{Lemma}
\newtheorem{rmk}{Remark}
\numberwithin{remark}{section}
\numberwithin{lem}{section}
\newcommand{\norm}[1]{\left\lVert#1\right\rVert}
\newcommand{\abs}[1]{\left\vert#1\right\vert}
\newcommand\Torus{{\mathbb T}}
\newcommand{\dss}{\displaystyle}
\newcommand{\cF}{E}
\title{On the $8\pi$-critical mass threshold of a Patlak-Keller-Segel-Navier-Stokes system}
\date{\today}
\author{Yishu Gong\footnote{\textit{
yishu.gong@duke.edu}, Department of Mathematics, Duke University}\quad 
Siming He\footnote{\textit{simhe@math.duke.edu}, Department of Mathematics, Duke University}}
\begin{document}
\maketitle
\abstract{In this paper, we proposed a coupled Patlak-Keller-Segel-Navier-Stokes system, which has dissipative free energy. On the plane $\rr^2$, if the total mass of the cells is strictly less than $8\pi$, classical solutions exist for any finite time, and their $H^s$-Sobolev norms are almost uniformly bounded in time. For the radially symmetric solutions, this $8\pi$-mass threshold is critical. On the torus $\mathbb{T}^2$, the solutions are uniformly bounded in time under the same mass constraint. 
}
\section{Introduction}
We consider the coupled Navier-Stokes-Patlak-Keller-Segel  equation modeling chemotaxis in moving fluid:
\begin{align}\label{pePKS-NS}
\left\{\begin{array}{rrrrr}\ba
\pa_t n+&u\cdot \na n+\na \cdot( n\na c)=\de n,\\
-\de& c=n,\\
\pa_t u+&(u\cdot \na )u+\na p=\de u +n\na c,\quad\na\cdot u=0,\\
n(t=&0,x)=n_{0}(x),\quad u(t=0,x)=u_{0}(x), \quad x\in  \rr^2.\ea\end{array}\right.
\end{align}
Here $n,c$ denote the cell density and the chemical density, respectively. The divergence-free vector field $u$ indicates the ambient fluid velocity. The first equation describes the time evolution of the cell density subject to chemotaxis-induced aggregation, diffusion caused by random Brownian motion, and transportation by ambient fluid flow $u$. Since the cells secrete the chemo-attractants, there exists a deterministic relation between the two distributions, $n$ and $c$.  The second equation specifies this connection. The assumption behind is that the chemo-attractant diffuses much faster than the fluid advection and cell aggregation,  and reaches equilibrium in a faster time-scale. The Newtonian potential is applied  to  determine $c$ uniquely, i.e., $c=-\frac{1}{2\pi}\log|\cdot|*n$. The third equation on the divergence-free vector field $u$ describes the fluid motion subject to forcing induced by the cells. The reasoning behind the coupling $n\na c$ is that in order to make the cells move without acceleration, the fluid exerts friction force on the moving cells, so reaction forces act on the fluid. The force $n \na c$ in the Navier-Stokes equation matches the aggregation nonlinearity in the cell density evolution. The same forcing appears in the {N}ernst-{P}lanck-{N}avier-{S}tokes system, see, e.g., \cite{ConstantinIgnatova19}.

If the ambient fluid velocity is identically equal to zero, i.e., $u\equiv 0$, the system \eqref{pePKS-NS} is the classical Patlak-Keller-Segel equation, which is first derived by C. Patlak \cite{Patlak}, and E. Keller and L. Segel \cite{KS}. The literature on the classical PKS model is extensive, and we refer the interested readers to the representative works, \cite{Biler06}, \cite{BlanchetEJDE06}, \cite{BlanchetCarrilloMasmoudi08}, \cite{Hortsmann} and the references therein. The classical PKS model preserves the total mass $M:=||n(t)||_1=||n_0||_1$ and is $L^1$ critical. If the initial data $n_0$ has total mass $M$ strictly less than $8\pi$, then the smooth solution exists for all time. Whereas if the initial data has total mass strictly larger than $8\pi$ and has a finite second moment, then the solution blows up in finite time, see, e.g., \cite{BlanchetEJDE06} and \cite{JagerLuckhaus92}. 

If the ambient fluid flow is not identically zero, i.e., $u\not\equiv 0$, the analysis of the long-time dynamics of the systems \eqref{pePKS-NS} are delicate. There is no heuristic arguments to rule out global solutions with large masses. Moreover, the underlying fluid flow might suppress the potential chemotactic blow-up in the system. This assertion is based on a series of work on the suppression of chemotactic blow-up through \emph{passive} fluid flows initiated by the work by A. Kiselev and X. Xu  \cite{KiselevXu15}. To simplify the analysis, in these models, the ambient fluid velocity fields $u$ are assumed to be independent of the time evolution of the cell densities. In these works, there are two main fluid machenisms to suppress the blow-up. The first mechanism is the fluid-mixing induced enhanced dissipation effect. The works in this direction are  \cite{BedrossianHe16}, \cite{He}, \cite{IyerXuZlatos}. The other mechanism to suppress the blow-up is the fast splitting scenario introduced in the paper  \cite{HeTadmor172}.
        
{The model \eqref{pePKS-NS} takes into account the \emph{active} chemotaxis-fluid interaction.  
The literature concerning coupled chemotaxis-fluid systems is vast. We refer the interested readers to the papers \cite{Lorz10}, \cite{Lorz12}, \cite{LiuLorz11}, \cite{DuanLorzMarkowich10},  \cite{FrancescoLorzMarkowich10}, \cite{Tuval05}, \cite{Winkler12}, \cite{TaoWinkler}, \cite{ChaeKangLee13}, \cite{KozonoMiuraSugiyama}, \cite{Winkler14, Winkler16, Winkler17, Winkler20, Winkler21} and the references therein. A lot of works are devoted to the study of parabolic-parabolic Patlak-Keller-Segel equations subject to active fluid motions. The coupling between the chemotaxis and the fluid in these models is through the gravity-buoyancy relation. The closest models to ours are proposed by A. Lorz \cite{Lorz12} and H. Kozono et al. l \cite{KozonoMiuraSugiyama}. The chemical densities $c$ in these models are also determined through elliptic-type equations. On the other hand, these models consider buoyancy forcing instead of the reaction force from the cells.
} 

{Another biologically relevant coupled Patlak-Keller-Segel-Navier-Stokes models was introduced by I. Tuval et al. \cite{Tuval05},
\begin{align}
\left\{\begin{array}{rrrrr}\ba
\pa_t n+&u\cdot \na n+\na \cdot( n\na c)=\de n,\\
\pa_t c+&u\cdot \na c=\de c-n f(c),\\
\pa_t u+&(u\cdot \na )u+\na p=\de u +n\na \phi,\quad\na\cdot u=0.\ea\end{array}\right.
\end{align}
Here the chemicals (oxygen) are transported by the fluid stream $u$ and get consumed with rate $f(c)>0$. Due to buoyancy, the cells exert force $n\na \phi$ on the fluid.  Since the chemicals are consumed along the dynamics, one expects that the cell density will not concentrate to form finite-time singularities.  However, the parabolic nature of the chemical evolution makes the analysis  challenging. In the papers \cite{Winkler14}, \cite{Winkler16}, \cite{Winkler17}, \cite{Winkler21}, global regularity, long time behaviors and Leray structure of the system are explored in detail. }

In this paper, we study the 
{critical} mass threshold, below which, the solutions of the system \eqref{pePKS-NS} are guaranteed to exist for all finite time. The main advantage of the proposed model \eqref{pePKS-NS} is that it possesses a naturally decreasing free energy, 
\begin{align}
E[n,u]:=&\int_{\rr^2} n\log n -\frac{1}{2}nc +\frac
{1}{2} |u|^2dx.\label{Free_energy_PKSNS}
\end{align} 
Moreover, since the vector field $u$ is divergence-free, the density equation for $n$ possesses a divergence structure and hence preserves the $L^1$ norm. 

On the whole plane, we prove the following theorem.
\begin{thm}[{Plane $\rr^2$ case}]\label{thm:R2}
Consider solutions $(n,u)$ to the equation \eqref{pePKS-NS} subject to  initial conditions $(n_0, u_0)\in H^s(\mathbb{R}^2)\times (H^s(\mathbb{R}^2))^2,\, s\geq3$ and $n_0(1+|x|^2)\in L^1(\rr^2)$. If the initial mass is strictly less than $8\pi$, 
\begin{align}
M:=||n_0||&_{L^1(\mathbb{R}^2)}<8\pi,
\end{align} 
then there exists a constant $C$, which depends on the initial data, such that the following estimate holds 
\begin{align}
||n(t)||_{H^s}+||u(t)||_{H^s}\leq C(n_0,u_0,\delta)e^{\delta t},\quad \forall t\in[0,\infty), \label{Exponential_H_s}
\end{align}
where $0<\delta$ is an arbitrary small constant. Therefore, the strong solutions $(n, u)$ exist on arbitrary finite time interval $[0,T],\, \forall T<\infty$. 
\end{thm}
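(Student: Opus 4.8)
The plan is to argue by continuation. Standard parabolic theory gives a unique strong solution of \eqref{pePKS-NS} on a maximal interval $[0,T^\ast)$, $T^\ast\in(0,\infty]$, with the blow-up alternative that $T^\ast<\infty$ implies $\|n(t)\|_{H^s}+\|u(t)\|_{H^s}\to\infty$ as $t\uparrow T^\ast$; so it suffices to establish the a priori estimate \eqref{Exponential_H_s} on $[0,T^\ast)$, which then forces $T^\ast=\infty$. The engine of the argument is that the free energy \eqref{Free_energy_PKSNS} is a Lyapunov functional. Differentiating $E[n,u]$ along the flow, using $-\de c=n$, integration by parts, and $\na\cdot u=0$, one finds that the drift $u\cdot\na n$ produces no net contribution to the entropy and chemical-interaction terms, and that the work term $\int n\,u\cdot\na c$ arising from the kinetic energy exactly cancels the one coming from $-\tfrac12\int nc$; what survives is
\[
\frac{d}{dt}E[n,u]=-\int_{\rr^2}n\,\bigl|\na(\log n-c)\bigr|^2\,dx-\int_{\rr^2}|\na u|^2\,dx\le 0 .
\]
Hence $E(t)\le E[n_0,u_0]$, and $\int_0^{T}\int n\,|\na(\log n-c)|^2\,dx\,dt+\int_0^{T}\|\na u\|_{L^2}^2\,dt$ is finite for every $T<T^\ast$.

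I would next turn the energy inequality into quantitative bounds. Writing $-\tfrac12\int nc=\tfrac1{4\pi}\iint n(x)n(y)\log|x-y|\,dx\,dy$ and decomposing, for $M<8\pi$,
\[
\int n\log n-\tfrac12\!\int nc=\Bigl(1-\tfrac{M}{8\pi}\Bigr)\int n\log n+\tfrac{M}{8\pi}\Bigl[\int n\log n+\tfrac2M\iint n(x)n(y)\log|x-y|\,dx\,dy\Bigr],
\]
the logarithmic Hardy--Littlewood--Sobolev inequality bounds the bracket below by a constant $-C(M)$, so $E(t)\le E_0$ and $\|u\|_{L^2}^2\ge0$ give $\int n(t)\log n(t)\,dx\le K_1$ uniformly in time. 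Together with the moment identity $\frac{d}{dt}\int n|x|^2\,dx=4M-\frac{M^2}{2\pi}+2\int n\,u\cdot x\,dx$ — whose last term is absorbed by Young's inequality against a small multiple of $\int n|x|^2$ and a multiple of $\int n|u|^2$ — one controls the second moment, hence also $\int n|\log n|\,dx$, on $[0,T^\ast)$ at worst with slow (at most $e^{\delta t}$) growth, and likewise $\|u(t)\|_{L^2}^2$ from the kinetic-energy balance $\frac{d}{dt}\|u\|_{L^2}^2+2\|\na u\|_{L^2}^2=2\int u\cdot n\na c$.

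With $\int n\log n$ under control, the next stage propagates higher integrability of $n$. Testing the density equation against $p\,n^{p-1}$, the divergence-free transport disappears and
\[
\frac{d}{dt}\int n^p\,dx+\frac{4(p-1)}{p}\int|\na n^{p/2}|^2\,dx=(p-1)\int n^{p+1}\,dx ,
\]
and following the Blanchet--Dolbeault--Perthame scheme — in which the subcriticality $M<8\pi$ is used sharply through the two-dimensional Gagliardo--Nirenberg and logarithmic HLS inequalities to absorb the chemotactic terms, in combination with the integrated entropy dissipation from the first step — one obtains $\|n(t)\|_{L^p}$ bounds for every $p<\infty$ and then, by an Alikakos--Moser iteration, $\|n(t)\|_{L^\infty}$ bounds on $[0,T^\ast)$. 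Since $n$ is bounded and has finite moments, $\na c$ is bounded and $n\na c\in L^\infty_tL^q$ for suitable $q$, and Navier--Stokes regularity estimates upgrade this to control of $\|u\|$ in $L^q_tW^{1,q}$ and of $\int_0^{T}\|\na u(t)\|_{L^\infty}\,dt$.

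Finally I would close the $H^s$ estimate. Applying $\na^\al$, $|\al|\le s$, to both evolution equations, pairing with $\na^\al n$ and $\na^\al u$, and using Kato--Ponce commutator and product estimates together with the $L^\infty$ and $W^{1,\infty}$ bounds for $n$, $c$, $\na c$ and $u$ established above, one arrives at a differential inequality of the form $\frac{d}{dt}\bigl(\|n\|_{H^s}^2+\|u\|_{H^s}^2\bigr)\le A(t)\bigl(\|n\|_{H^s}^2+\|u\|_{H^s}^2\bigr)+B(t)$, with $A,B\ge0$ built only from the lower-order quantities already bounded (up to slow growth); a Grönwall argument, with the constants in Young's inequality tuned so that $\int_0^tA\le\delta t+C(n_0,u_0,\delta)$, yields \eqref{Exponential_H_s} and contradicts $T^\ast<\infty$. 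I expect the main difficulty to be the third stage: carrying the sharp $8\pi$-dependent absorptions of the classical Patlak--Keller--Segel theory through to global-in-time $L^p$ and $L^\infty$ bounds for $n$ in the presence of the new active transport $u\cdot\na n$ — which is benign for the $L^p$ energies precisely because $u$ is divergence-free, but must be handled carefully together with the moment estimates — while the loss of compactness on $\rr^2$ (no Poincaré inequality, only polynomial spatial decay) is what prevents time-uniform bounds and accounts for the $e^{\delta t}$ factor.
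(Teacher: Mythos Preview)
Your overall architecture---free-energy dissipation, log-HLS, $L^p$ bootstrap, then $H^s$ energy estimates---matches the paper's, but there is a genuine gap at the second-moment step, and the paper explicitly identifies it as the obstacle. You propose to control $V[n]=\int n|x|^2\,dx$ from
\[
\frac{d}{dt}V[n]=4M-\tfrac{M^2}{2\pi}+2\int n\,u\cdot x\,dx
\]
by Young's inequality, producing $\epsilon V[n]+\epsilon^{-1}\int n|u|^2\,dx$. But $\int n|u|^2\,dx$ is not an a-priori quantity: every bound for it (say $\|n\|_2\|u\|_4^2$ or $\|n\|_\infty\|u\|_2^2$) requires $L^p$ control of $n$ with $p>1$ or $L^\infty$ control of $u$, both of which pass through the $S^+[n]$ bound, which in turn needs $S^-[n]\lesssim V[n]$. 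The loop does not close. The same circularity undermines your claim that $\|u(t)\|_{L^2}$ is controlled ``from the kinetic-energy balance'': the free-energy inequality plus log-HLS only gives $(1-\tfrac{M}{8\pi})S[n]+\tfrac12\|u\|_2^2\le C$, and this yields nothing on $\|u\|_2$ once $S[n]\to-\infty$, which is exactly what happens when the second moment is not controlled. The paper states the issue directly: ``controlling second moment requires $\|u\|_\infty$ information, which is typically missing in the a-priori estimates.''

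The paper's fix is to drop the second-moment route entirely and instead modify the free energy: replace $\log n$ by a $C^2$ function $\Gamma(n)$ that equals $\log n$ for $n\ge\eta$ but is the degree-two Taylor polynomial of $\log$ at $\eta$ for $n<\eta$. Then $-\int_{\{n<1\}} n\,\Gamma(n)\,dx$ is bounded purely by $M$ and $\eta$, so no moment is needed; the price is that the modified functional $E_\Gamma$ is only almost-dissipative, $\frac{d}{dt}E_\Gamma\le\delta$, with $\eta=\eta(\delta,M)$. This yields $S^+[n(t)]+\|u(t)\|_2^2\le C(\delta)+\delta t$ directly, after which the truncation argument is run with a time-dependent level $K(T)\sim e^{C\delta T}$ to get $\|n(t)\|_{L^2}\lesssim e^{\delta t}$; that---not the absence of a Poincar\'e inequality per se---is the actual source of the exponential factor in \eqref{Exponential_H_s}.
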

\begin{rmk}To our knowledge, this is the first critical-mass result in the coupled Patlak-Keller-Segel-Navier-Stokes systems. 
\end{rmk}
\begin{rmk}
The exponential bounds stated in the theorem might not be  optimal. We conjecture that the solutions subject to subcritical mass are uniformly bounded in time. 

In the radially symmetric setting, the long-time behavior of the solutions are better understood. We will show that the chemotactic blow-up occurs if the initial density $n_0$ has total mass $||n_0||_{L^1}>8\pi$ and has  finite second moment (Corollary \ref{cor:radially_symmetric_case}). On the other hand, if the total mass is strictly less than $8\pi$, and the initial second moment is finite, then the $L^2$-norm of the  solutions $(n, \mathrm{curl}u)$ decay to zero as time approaches infinity with algebraic  rate (Theorem \ref{thm:radial_long_time}).
\end{rmk}
\begin{rmk}
Extending Theorem \ref{thm:R2}, which concerns the parabolic-elliptic Patlak-Keller-Segel-Navier-Stokes system, to the fully parabolic setting is both interesting and challenging.
\end{rmk}
One of the main obstacles to uniform in time bounds on the solutions is the lack of control over the second moment. To properly illustrate that this is the only obstacle, we choose to study the model \eqref{pePKS-NS} on torus $\mathbb{T}^2$, and show that under the same subcritical mass constraint, the solutions are uniformly bounded in time. To this end, due to compatibility with the boundary conditions involved, we have to adjust the equation \eqref{pePKS-NS} accordingly. Here we specified the equation on the torus $\mathbb{T}^2$:
\begin{align}\label{pePKS-NS-Torus}
\left\{\begin{array}{rrrrr}\ba
\pa_t n+&u\cdot \na n+\na \cdot(n\na c )=\de n,\\
-\de& c=n-\overline{n},\quad \overline{n}=\frac{1}{|\mathbb{T}^2|}\int_{\mathbb{T}^2}ndx\\
\pa_t u+&(u\cdot \na )u+\na p=\de u +n\na c,\quad\na\cdot u=0,\\
n(t=&0,x)=n_{0}(x),\quad u(t=0,x)=u_{0}(x), \quad x\in \mathbb{T}^2.\ea\end{array}\right.
\end{align}
Without loss of generality, we assume that the size of the torus is $|\mathbb{T}|=1$. The chemical $c$ is determined by $\dss c(x)=-\int_{\Torus^2} B_{\Torus^2}(x,y)n(y)dy$, where $B_{\Torus^2}(x,y)$ is the Green's function of the Laplacian $\de$ on the torus $\Torus^2$.

The second main theorem of the paper describes the 
{global well-posedness} of the equations \eqref{pePKS-NS-Torus}. 
\begin{thm}[Torus $\mathbb{T}^2$ case]\label{thm:Torus}
Consider the solution to the equation \eqref{pePKS-NS-Torus} subject to $H^s$ initial data $(n_0, u_0)\in H^s(\mathbb{T}^2)\times (H^s(\mathbb{T}^2))^2,\, s\geq 3$. If the initial mass $M:=||n_0||_{L^1(\mathbb{T}^2)}$ is strictly less than $8\pi$, i.e., $M<8\pi$, then the solution $(n, u)$ has uniform-in-time bounded $H^s$ Sobolev norm, i.e.,
\begin{align}
||n||_{L_t^\infty([0,\infty);H^s)}+||u||_{L_t^\infty([0,\infty);H^s)}\leq C_{H^s}(||n_0||_{H^s},||u_0||_{H^s})<\infty.
\end{align}  
\end{thm}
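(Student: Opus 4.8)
The plan is to run the energy method driven by the dissipative free energy $E[n,u]$, exploiting that on $\Torus^2$ the bad second-moment term of the $\rr^2$ analysis is absent. First I would verify that along smooth solutions of \eqref{pePKS-NS-Torus} the functional $E[n,u]=\int_{\Torus^2} n\log n-\tfrac12 nc+\tfrac12|u|^2\,dx$ is non-increasing, with dissipation rate
\begin{align}
\frac{d}{dt}E[n,u]=-\int_{\Torus^2} n\,\bigl|\na(\log n-c)\bigr|^2\,dx-\int_{\Torus^2}|\na u|^2\,dx\le 0,
\end{align}
where the cross terms $\int n\na c\cdot u$ coming from the transport of $n$ and from the forcing in the $u$-equation cancel exactly (this cancellation is the structural reason the model has a free energy). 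Since $\|n(t)\|_{L^1}=M$ is conserved, the subcritical logarithmic Hardy--Littlewood--Sobolev inequality on $\Torus^2$ (equivalently the Moser--Trudinger/Onofri inequality, valid because $M<8\pi$) bounds $-\tfrac12\int nc$ from below by $-\tfrac{M}{8\pi}\int n\log n - C(M)$, so the entropy $\int n\log n$, the $L^2$ norm $\|u\|_{L^2}$, and the full dissipation integral $\int_0^\infty\!\!\int n|\na(\log n-c)|^2+\int_0^\infty\|\na u\|_{L^2}^2$ are all controlled uniformly in time by $E[n_0,u_0]$ and $M$.

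Next I would upgrade this to a uniform $L^p$ bound on $n$ for some $p>1$, and then to all $p<\infty$, by a bootstrap on $\frac{d}{dt}\|n\|_{L^p}^p$. The transport term $\int u\cdot\na n\, n^{p-1}$ vanishes by incompressibility, so the $L^p$ evolution is identical to that of the fluid-free parabolic-elliptic PKS equation: the chemotactic term is handled with the Gagliardo--Nirenberg--Sobolev inequality and absorbed into the diffusion $-\tfrac{4(p-1)}{p}\|\na n^{p/2}\|_{L^2}^2$ provided $\|n\|_{L^1}<8\pi$ (again the subcritical threshold enters here, through the sharp constant). Uniform-in-time $\|n(t)\|_{L^p}$ follows once one also uses the uniform entropy bound to control the zero mode; feeding $n\in L_t^\infty L^p$ with $p>1$ into the elliptic equation gives $c\in L_t^\infty W^{2,p}$, hence $\na c\in L_t^\infty L^\infty$ by Sobolev embedding after boosting $p$. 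Then standard parabolic regularity for $n$ (with the drift $u+\na c$) and for the 2D Navier--Stokes equation with the bounded forcing $n\na c\in L_t^\infty L^2$ yields uniform-in-time bounds on $\|n\|_{L^\infty}$ and on $\|u\|_{H^1}$ (the 2D energy/enstrophy estimates are globally controlled once the forcing is in $L_t^\infty L^2$, with the enstrophy inequality closing by the logarithmic or Ladyzhenskaya inequality and the uniform $L^2$ bound on $u$).

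Finally I would propagate the $H^s$ regularity, $s\ge 3$, by a Grönwall argument on $\tfrac{d}{dt}(\|n\|_{H^s}^2+\|u\|_{H^s}^2)$: the commutator and nonlinear terms $[\Lambda^s,u\cdot\na]n$, $\na\cdot\Lambda^s(n\na c)$, $[\Lambda^s,u\cdot\na]u$, and $\Lambda^s(n\na c)$ are estimated by the Kato--Ponce product and commutator inequalities, producing a bound of the form $\frac{d}{dt}(\|n\|_{H^s}^2+\|u\|_{H^s}^2)\le P\bigl(\|n\|_{L^\infty},\|u\|_{L^\infty},\|\na c\|_{L^\infty},\|\na u\|_{L^\infty}\bigr)(\|n\|_{H^s}^2+\|u\|_{H^s}^2)$. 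Since all the low-order quantities appearing in $P$ were already shown to be uniformly bounded in time and since the diffusion terms $-\|\na n\|_{H^s}^2-\|\na u\|_{H^s}^2$ provide genuine damping, the resulting differential inequality is of the form $\dot y\le Cy - cy$ type (after spending part of the dissipation to absorb the top-order terms), which gives uniform-in-time boundedness of $y=\|n\|_{H^s}^2+\|u\|_{H^s}^2$ rather than merely exponential growth — this is exactly why the torus statement is stronger than Theorem \ref{thm:R2}. The main obstacle is the step establishing the \emph{uniform} (not just locally-in-time) $L^p$ bound on $n$: one must show the chemotactic production is strictly dominated by diffusion using the sharp subcritical constant, and simultaneously control the mean-zero projection and the coupling through $u$; once that uniform $L^p$ bound is in hand, the rest is a routine, if lengthy, parabolic-and-Navier--Stokes bootstrap. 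I would also take care, throughout, with the fact that $c$ is defined via the torus Green's function $B_{\Torus^2}$, so $c$ has zero mean and the relevant functional inequalities (Onofri, GNS) must be applied to mean-zero data.
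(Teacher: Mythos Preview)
Your proposal follows essentially the same route as the paper: free-energy dissipation, log-HLS on the torus to bound the entropy and $\|u\|_{L^2}$ uniformly, then the parabolic $L^p$ bootstrap (which is identical to the fluid-free case since $\int u\cdot\na n\,n^{p-1}=0$), and finally the $H^s$ energy estimate with the dissipation at level $s+1$ interpolated back via GNS to close a uniform-in-time ODE for $\|n\|_{\dot H^s}^2+\|u\|_{\dot H^s}^2$.

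Two points are worth flagging. First, your claim that the $8\pi$ threshold enters \emph{again} in the $L^p$ step ``through the sharp constant'' is not how the paper (or the standard argument) runs: the sharp GNS constant in $\int n^{p+1}\le C\|\na n^{p/2}\|_2^2\|n\|_1$ does not give $8\pi$. Instead the paper uses the already-established entropy bound to truncate, writing $n=(n-K)_++\min\{n,K\}$ and choosing $K=K(C_{L\log L})$ so large that $\|(n-K)_+\|_{L^1}\le C_{L\log L}/\log K$ is small; then GNS absorbs the cubic term with \emph{any} constant, no sharp-constant analysis needed. The mass condition $M<8\pi$ enters only once, in the free-energy/log-HLS step. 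Second, you gloss over a technical but non-trivial point: the interaction energy $-\tfrac12\int(n-\bar n)c$ on $\Torus^2$ is defined through the torus Green's function $B_{\Torus^2}$, not through $\log d(x,y)$, and the paper devotes a separate lemma (proved in the appendix via a cut-off and the $\rr^2$ fundamental solution) to showing these differ only by an $O(M^2)$ error, so that the compact-manifold log-HLS inequality of Shafrir--Wolansky can be applied. Your closing remark about mean-zero data is in the right direction, but the actual work is this Green's-function comparison, not merely subtracting means.
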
 
\begin{rmk}
We comment that similar uniform-in-time bounds are obtained in the parabolic-parabolic setting given that the total mass is small enough, \cite{Winkler20}.
\end{rmk}
\subsection{Ideas of the Proof}
We discuss the idea behind Theorem \ref{thm:R2}. Recall the free energy $E$ for the system \eqref{pePKS-NS} and the second moment $V$
\begin{align}
V[n]:=&\int_{\rr^2} n|x|^2dx.\label{Second_moment}
\end{align} 
The existence of a decreasing free energy is crucial in obtaining sharp critical mass results in Patlak-Keller-Segel type equations. We recall that for the classical PKS equation ($u\equiv 0$), there exists a dissipative free energy, 
\begin{align*}
E_{\mathrm{classic}}=\int_{\rr^2} n\log n -\frac{1}{2}nc dx.
\end{align*} 
However, if  the fluid transport structure is introduced in the cell density evolution equation, the classical free energy will no longer decay in general. This is one of the main difficulties in analysing the coupled Patlak-Keller-Segel-Navier-Stokes systems. However, our coupled system \eqref{pePKS-NS} possesses a new dissipative free energy \eqref{Free_energy_PKSNS}. This is the main content of the next lemma.

\begin{lem}\label{Lem:Free_Energy_Decay}
Consider regular solutions $(n,u)$ to the equation \eqref{pePKS-NS}. Further assume that 
{$(n,u)\in \mathrm{Lip}_t([0,T];H_x^s(\rr^2)\times (H_x^s(\rr^2))^2)$,\, $s\geq 3$}, and $n(1+|x|^2)\in L_t^\infty([0,T];L_x^1(\rr^2))$. Then the free energy \eqref{Free_energy_PKSNS} is dissipated along the dynamics \eqref{pePKS-NS}, i.e.,
\begin{align}\label{Free_Energy_dissipation}
E[n(t), u(t)]= E[n_0,u_0]-\int_0^t\int_{\rr^2} n|\na \log n-\na c|^2 dxds-\int_0^t\int_{\rr^2}|\na u|^2dxds, \quad \forall t\in[0,T]. 
\end{align}
\end{lem}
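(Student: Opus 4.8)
The plan is to differentiate the three pieces of $E[n,u]$ in time and combine the resulting identities, using the structure of \eqref{pePKS-NS} to produce the two nonnegative dissipation integrals. The regularity assumption $(n,u)\in \mathrm{Lip}_t([0,T];H^s)$ with $s\geq 3$ guarantees all the derivatives below are classical and that integration by parts is justified; the decay hypothesis $n(1+|x|^2)\in L^\infty_t L^1_x$ controls the boundary terms at spatial infinity when I integrate by parts, and also makes $\int n\log n$ finite and differentiable. First I would compute $\frac{d}{dt}\int \tfrac12 |u|^2\,dx$: testing the Navier-Stokes equation against $u$, the pressure term and the transport term $(u\cdot\na)u$ drop by $\na\cdot u=0$, the viscous term gives $-\int|\na u|^2\,dx$, and the forcing gives $+\int (n\na c)\cdot u\,dx$. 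Next, $\frac{d}{dt}\int \tfrac12 n c\,dx$: using $-\de c=n$ one has the symmetry $\int n_t c\,dx=\int n c_t\,dx$ (each equals $\int \na c_t\cdot\na c\,dx$ up to sign), so $\frac{d}{dt}\int\tfrac12 nc = \int n_t c\,dx$. Then I substitute the $n$-equation $n_t=-u\cdot\na n-\na\cdot(n\na c)+\de n$ and integrate by parts.

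The key step is the cancellation of the fluid-transport contributions. When I expand $\frac{d}{dt}\int n\log n\,dx = \int (1+\log n)n_t\,dx$ and $\frac{d}{dt}\int\tfrac12 nc\,dx = \int c\,n_t\,dx$ and subtract, the $u$-dependent term is $-\int (\log n - c)\,(u\cdot\na n)\,dx = -\int u\cdot\na\big(n\log n - n - nc\big)\,dx - \int u\cdot(n\na c)\,dx$ after rewriting $(\log n-c)\na n = \na(n\log n - n) - n\na c + \cdots$; more carefully, $-\int(\log n - c)u\cdot\na n\,dx$: the term $-\int \log n\,(u\cdot\na n) = -\int u\cdot\na(n\log n - n)=0$ by $\na\cdot u=0$, while $+\int c\,(u\cdot\na n)\,dx = -\int n\,u\cdot\na c\,dx$ after integrating by parts (again using $\na\cdot u=0$). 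So the net fluid term coming from $\frac{d}{dt}(\int n\log n - \tfrac12\int nc)$ is exactly $-\int (n\na c)\cdot u\,dx$, which cancels the $+\int(n\na c)\cdot u\,dx$ produced by the velocity equation. The remaining (non-fluid) terms are the classical Patlak-Keller-Segel free-energy dissipation: $\int(1+\log n - c)\big(\de n - \na\cdot(n\na c)\big)\,dx = -\int n|\na\log n - \na c|^2\,dx$, obtained by writing $\de n - \na\cdot(n\na c) = \na\cdot\big(n(\na\log n - \na c)\big)$ and integrating by parts against $\na(\log n - c) = \na\log n - \na c$.

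Collecting the three contributions yields $\frac{d}{dt}E[n,u] = -\int n|\na\log n-\na c|^2\,dx - \int|\na u|^2\,dx$, and integrating in time from $0$ to $t$ gives \eqref{Free_Energy_dissipation}. The main obstacle, and the only place the hypotheses are really used beyond formal manipulation, is justifying the integrations by parts on $\rr^2$ — i.e., the vanishing of boundary terms at infinity for quantities like $n\log n\, u$, $nc\,u$, $n\na c\cdot u$, and $n(\na\log n-\na c)(\log n - c)$. For these I would use that $n\in H^s$ with $s\geq 3$ (so $n$ and its derivatives decay), that $\na c = -\frac{1}{2\pi}\frac{x}{|x|^2}\ast n$ has the standard decay $|\na c(x)|\lesssim M/|x|$ for large $|x|$ coming from the mass being finite, that $c$ grows at most logarithmically (which is where $n|x|^2\in L^1$, hence finiteness of moments, helps bound $\int nc$ and related terms via the logarithmic HLS-type estimates), and that $u\in H^s$ decays at infinity. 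A cutoff-function argument, multiplying by $\chi(x/R)$ and sending $R\to\infty$ with the decay bounds above, makes every integration by parts rigorous; I would also note that $t\mapsto E[n(t),u(t)]$ is absolutely continuous under the stated Lipschitz-in-time regularity so that the pointwise-in-$t$ identity integrates to \eqref{Free_Energy_dissipation}.
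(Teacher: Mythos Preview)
Your proposal is correct and follows essentially the same approach as the paper: differentiate $E$ directly, use $\nabla\cdot u=0$ to kill the transport terms, and observe that the coupling term $\int n\,u\cdot\nabla c\,dx$ produced by the fluid equation exactly cancels the one produced by the advection of $n$ in the entropy/interaction part. Your write-up is in fact more detailed than the paper's (which simply states the outcome of the computation and the identity $\int u\cdot((u\cdot\nabla)u)\,dx=0$), and your added discussion of justifying the integrations by parts at infinity via the regularity and moment hypotheses is a welcome elaboration that the paper leaves implicit.
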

\begin{proof}
Direct calculation using integration by parts and divergence free condition of $u$ yields that
\begin{align*}
\frac{d}{dt}E
=&-\int_{\rr^2} n|\na \log n-\na c|^2 dx-\int_{\rr^2} n u\cdot \na c dx\\&-\int_{\rr^2}|\na u|^2dx-\int_{\rr^2}u\cdot ((u\cdot \na)u) dx-\int_{\rr^2} u\cdot\na p dx+\int _{\rr^2}n u\cdot\na cdx \\
=&-\int_{\rr^2} n|\na \log n-\na c|^2 dx-\int_{\rr^2}|\na u|^2dx\leq 0.
\end{align*} Here in the last line, we apply the relation that \[\int_{\rr^2} u\cdot ((u\cdot \nabla)u)dx=\int_{\rr^2} u \cdot \nabla\left(\frac{|u|^2}{2}\right)dx=0.\] 
Now integration in time yields the equation  \eqref{Free_Energy_dissipation}. 
\end{proof}
Before utilizing the dissipative free energy to derive global well-posedness of the solutions, we present the following local-wellposedness result, whose proof will be postponed to the appendix.
\begin{thm}\label{thm:local_well_posedness}[Local well-posedness] 
Consider the solutions to the equation \eqref{pePKS-NS} subject to $H^s$ initial data, i.e., $(n_0, u_0)\in H^s(\mathbb{R}^2)\times (H^s(\mathbb{R}^2))^2,\, s\geq3$. 
There exists a small constant $\ep=\ep(||n_0||_{L^1\cap H^1}, ||u_0||_{H^1})$  such that the Sobolev $H^s$ norms of the solutions are bounded on the time interval $[0,\ep]$
\begin{align}
||n(t)||_{H^s}+ ||u(t)||_{H^s}<\infty,\quad \forall t\in[0, \ep]. 
\end{align}
\end{thm}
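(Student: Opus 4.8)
\emph{Idea of the proof.} The plan is to combine a routine construction of a local $H^s$ solution with a lower-order a priori estimate showing that the length of the existence interval is in fact dictated only by $\|n_0\|_{L^1\cap H^1}$ and $\|u_0\|_{H^1}$; the mechanism behind this improvement is the parabolic smoothing in the $n$- and $u$-equations, which propagates $H^s$ regularity for as long as the $L^1\cap H^1$ norm stays finite.

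First I would build an $H^s$ solution on some interval $[0,\ep_0]$ by successive approximations (equivalently, by mollifying the nonlinearities and passing to a limit by compactness). Given $(n^{k},u^{k})$, define $(n^{k+1},u^{k+1})$ by the \emph{linear} problems
\begin{align*}
\pa_t n^{k+1}+u^{k}\cdot\na n^{k+1}+\na\cdot(n^{k+1}\na c^{k})=\de n^{k+1},\qquad -\de c^{k}=n^{k},\\
\pa_t u^{k+1}+(u^{k}\cdot\na)u^{k+1}+\na p^{k+1}=\de u^{k+1}+n^{k}\na c^{k},\qquad\na\cdot u^{k+1}=0,
\end{align*}
started from $(n_0,u_0)$; each step is a linear advection--reaction--diffusion equation for $n^{k+1}$ together with a linear Stokes-type system for $u^{k+1}$, solvable in $C([0,T];H^s)\cap L^2([0,T];H^{s+1})$. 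That the iterates stay in a fixed ball follows from the standard $H^s$ energy estimate, using Kato--Ponce commutator and product bounds for $[\pa^\alpha,u^k\cdot\na]n^{k+1}$ and $\pa^\alpha\na\cdot(n^{k+1}\na c^k)$ with $|\alpha|\le s$, the elliptic bounds $\|\na^2 c^k\|_{H^{s-1}}\lesssim\|n^k\|_{H^{s-1}}$ (boundedness of Riesz transforms) and $\|\na c^k\|_{L^\infty}\lesssim\|n^k\|_{L^1\cap H^1}$ (splitting the kernel $x/|x|^2$ into a local $L^q$, $q<2$, piece and an $L^\infty$ tail), and the bound $\|n^{k+1}(t)\|_{L^1}\le\|n_0\|_{L^1}$, which comes from the divergence form of the equation and $\na\cdot u^k=0$. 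A difference estimate in the weaker norm $C([0,\ep_0];L^2)$ makes the iteration a contraction after shrinking $\ep_0$; interpolating the limit with the uniform $H^s$ bound gives convergence in $C([0,\ep_0];H^{s'})$ for every $s'<s$, which is enough to pass to the limit in the equations, and lower semicontinuity places the solution in $L^\infty_t H^s$. Uniqueness follows from the same difference estimate. I would defer these routine details to the appendix.

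Next I would show the existence time is controlled by the low norms. Testing the $n$-equation against $n$ and against $-\de n$ kills the transport terms (by $\na\cdot u=0$), while the chemotactic terms, after integration by parts and $-\de c=n$, leave contributions like $\int n^3$ and $\int|\na n|^2 n$ that in two dimensions are absorbed into $\|\na n\|_{L^2}^2+\|\de n\|_{L^2}^2$ up to a polynomial in $\|n\|_{L^1\cap H^1}$ (Gagliardo--Nirenberg together with $\|n(t)\|_{L^1}\le\|n_0\|_{L^1}$); the Navier--Stokes part uses the usual two-dimensional energy identities with forcing bounded by $\|n\na c\|_{L^2}\le\|\na c\|_{L^\infty}\|n\|_{L^2}\lesssim\|n\|_{L^1\cap H^1}\|n\|_{L^2}$. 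This gives $\frac{d}{dt}\big(\|n\|_{H^1}^2+\|u\|_{H^1}^2\big)\le C\big(1+\|n\|_{H^1}^2+\|u\|_{H^1}^2\big)^{q}$ with $C,q$ absolute, hence $\|n(t)\|_{H^1}^2+\|u(t)\|_{H^1}^2$ stays bounded on $[0,\ep]$ for some $\ep=\ep(\|n_0\|_{L^1\cap H^1},\|u_0\|_{H^1})$. Finally, the continuation criterion for the scheme above says the $H^s$ solution extends as long as $\|n\|_{L^1\cap H^1}+\|u\|_{H^1}$ is finite: the top-order estimate closes as $\frac{d}{dt}\big(\|n\|_{H^s}^2+\|u\|_{H^s}^2\big)\le Q\big(\|n\|_{L^1\cap H^1},\|u\|_{H^1}\big)\big(\|n\|_{H^s}^2+\|u\|_{H^s}^2\big)+(\text{l.o.t.})$, linear in the top-order norm because the parabolic dissipation absorbs the genuinely highest derivatives; with $Q$ bounded on $[0,\ep]$ by the previous step, Gronwall keeps $\|n(t)\|_{H^s}+\|u(t)\|_{H^s}$ finite there, which is the claim.

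I expect the main difficulty to be the chemotactic term $\na\cdot(n\na c)$ with $c=(-\de)^{-1}n$: in two dimensions it sits at the same scaling as the diffusion (the system is $L^1$-critical), so the estimates only close by exploiting the (almost) conserved mass --- through Hardy--Littlewood--Sobolev-type bounds on $\na c$ --- in tandem with the smoothing from $\de n$. One must also handle the low-frequency behaviour of the Newtonian potential on $\rr^2$: $\na c$ need not lie in $L^2$ even for Schwartz $n$ unless $n$ has zero mean, so only $\|\na c\|_{L^\infty}$ and second-order quantities such as $\|\na^2 c\|_{H^{s-1}}=\|n\|_{H^{s-1}}$ are available, and the whole argument has to be organized around this restriction.
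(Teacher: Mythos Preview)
Your proposal is correct and follows essentially the same route as the paper: derive a closed differential inequality for $\|n\|_{L^2}^2$ and then for $\|n\|_{\dot H^1}^2+\|u\|_{\dot H^1}^2$ so that the existence time $\ep$ depends only on $\|n_0\|_{L^1\cap H^1}$ and $\|u_0\|_{H^1}$, and then propagate $H^s$ by an estimate linear in the top norm once the low norms are controlled. The only noteworthy differences are packaging: the paper defers the construction to a standard reference and works directly with a-priori estimates on smooth solutions, whereas you spell out an explicit linearized iteration with a contraction in $C_tL^2$; and the paper controls the chemical gradient via $\|\na c\|_{L^4}\lesssim_{\mathrm{HLS}}\|n\|_{L^{4/3}}$ in the $L^2$/$H^1$ step rather than your kernel-splitting bound $\|\na c\|_{L^\infty}\lesssim\|n\|_{L^1\cap H^1}$, but both choices close the same inequalities.
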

Next we recall from the classical PKS literature that, to propagate higher regularities of solutions, the entropy bound of the solution is essential, see, e.g., \cite{BlanchetEJDE06}, \cite{BlanchetCarrilloMasmoudi08}. We present here a similar criteria which guarantees propagation of regularity. 
\begin{thm}\label{thm:S+n_condition}
Consider solution $(n,u)$ to the equation \eqref{pePKS-NS} subject to initial conditions $(n_0, u_0)\in H^s(\mathbb{R}^2)\times (H^s(\mathbb{R}^2))^2,\, s\geq3, \, n_0(1+|x|^2)\in L^1(\rr^2)$. If the positive part of the entropy is bounded, i.e., 
\begin{align}\label{S+n_condition}
S^+[n(t)]:=\int_{\rr^2} n(t,x)\log ^+ n(t,x)dx\leq C_{L\log L}<\infty,\quad\forall t\in [0,T],
\end{align}
and the energy of the fluid $u$ is bounded, i.e.,
\begin{align}\label{Energy_u_condition}
||u(t)||_2^2\leq C_{u;L^2}^2<\infty, \quad\forall t\in[0,T],
\end{align}
then the solution has bounded $H^s,\, s\geq 3$ norms on the same time interval 
 \begin{align}
 ||n(t)||_{H^s}+||u(t)||_{H^s}\leq C_{H^s}(C_{L\log L},C_{u;L^2}, ||n_0||_{H^s},||u_0||_{H^s})<\infty, \quad \forall t\in [0,T].
 \end{align}
\end{thm}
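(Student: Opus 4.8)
The plan is to upgrade the two weak quantities controlled by hypothesis — the logarithmic integrability \eqref{S+n_condition} and the kinetic energy \eqref{Energy_u_condition} — into a full scale of $L^p$ bounds on $n$, and then to run a single Gr\"onwall estimate for $\|n\|_{H^s}^2+\|u\|_{H^s}^2$ in which the parabolic dissipation dominates every nonlinearity. Note that the mass $M=\|n(t)\|_{L^1}=\|n_0\|_{L^1}$ is conserved (the $n$-equation is in divergence form and $\na\cdot u=0$), and that the assumption $n_0(1+|x|^2)\in L^1$ together with \eqref{S+n_condition} controls the \emph{full} entropy $\int n|\log n|\,dx$ (the negative part being dominated by the second moment and the mass). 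No subcriticality of $M$ is used here: the $8\pi$ threshold enters only elsewhere, in \emph{producing} the bound \eqref{S+n_condition}.

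\emph{Step 1: $L^p$ bounds on $n$ for every $p<\infty$.} Testing the density equation against $p\,n^{p-1}$, using $\na\cdot u=0$ to annihilate the transport term and $-\de c=n$, gives for $p\ge2$
\[
\frac{d}{dt}\int n^p\,dx+\frac{4(p-1)}{p}\int\big|\na n^{p/2}\big|^2\,dx=(p-1)\int n^{p+1}\,dx .
\]
The point is to absorb the $L^1$-critical right-hand side with no smallness of $M$. Estimate \eqref{S+n_condition} forces \emph{uniform non-concentration}: since $\int_{\{n>K\}}n\,dx\le C_{L\log L}/\log K$, for every $\eta>0$ there is a level $K_\eta$ with $\sup_{t\in[0,T]}\int_{\{n(t)>K_\eta\}}n(t)\,dx\le\eta$. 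Split $\int n^{p+1}=\int_{\{n\le K_\eta\}}+\int_{\{n>K_\eta\}}$; the first piece is $\le K_\eta\|n\|_p^p$. For the second, write $n^{p+1}=n^{(p+1)\lambda}\,n^{(p+1)(1-\lambda)}$ with $\lambda<1/(p+1)$, apply H\"older to pull out the factor $\big(\int_{\{n>K_\eta\}}n\big)^{(p+1)\lambda}$, and bound the remaining $L^b$ norm, $b=(p+1)(1-\lambda)/(1-(p+1)\lambda)$, by the two-dimensional Gagliardo--Nirenberg--Nash interpolation anchored at the conserved $L^1$ mass, namely $\int n^b\lesssim \|\na n^{p/2}\|_2^{2(b-1)/p}\,\|n\|_{L^1}$. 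Since $(b-1)(1-(p+1)\lambda)=p$, the dissipation $\|\na n^{p/2}\|_2$ ends up raised to the power exactly $2$, multiplied by $\big(\int_{\{n>K_\eta\}}n\big)^{(p+1)\lambda}M^{\,1-(p+1)\lambda}\le C\eta^{(p+1)\lambda}M^{\,1-(p+1)\lambda}$. Choosing $\eta$ small so that this prefactor is $<\tfrac{2(p-1)}{p}$, and then interpolating the residual $K_\eta\|n\|_p^p$ between $\|\na n^{p/2}\|_2^2$ and $\|n\|_{L^1}$, one arrives at a differential inequality of the form
\[
\frac{d}{dt}\int n^p\,dx\le -c_p\big\|\na n^{p/2}\big\|_2^2+C_p\le -\kappa_p\Big(\int n^p\,dx\Big)^{\frac{p}{p-1}}+C_p ,
\]
which is of logistic type, so $\|n(t)\|_{L^p}$ is bounded on $[0,T]$ in terms of $p,M,C_{L\log L}$ and $\|n_0\|_{L^p}$ (finite since $n_0\in H^s$). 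A Moser-type iteration in $p$ then yields $n\in L^\infty_t([0,T];L^\infty_x)$, and hence $\na c\in L^\infty_tL^\infty_x$ by splitting the Newtonian kernel and using $\|n\|_{L^1}+\|n\|_{L^p}$ for some $p>2$.

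\emph{Step 2: $H^s$ estimates for $(n,u)$.} With $n$ and $\na c$ bounded in $L^\infty_tL^\infty_x$, and using \eqref{Energy_u_condition} as the starting point of the fluid hierarchy — a preliminary enstrophy estimate $\frac{d}{dt}\|\mathrm{curl}\,u\|_2^2+2\|\na\,\mathrm{curl}\,u\|_2^2\le C$, using $|\mathrm{curl}(n\na c)|\le|\na n||\na c|$ and the $L^2_t$ bound on $\na n$ from Step~1, bridges \eqref{Energy_u_condition} to $\|u\|_{H^1}$ — one closes a Gr\"onwall inequality for $\mathcal E_s(t):=\|n(t)\|_{H^s}^2+\|u(t)\|_{H^s}^2$. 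Applying $D^\al$, $|\al|\le s$, to both equations, pairing with $D^\al n$ and $D^\al u$, and using $\na\cdot u=0$ to kill the top-order contributions of the transport terms, the remaining terms — the chemotactic term $\na\cdot(n\na c)$, the Navier--Stokes forcing $n\na c$ (controlled via elliptic regularity $\|c\|_{H^{s+2}}\lesssim\|n\|_{H^s}$ and the $L^\infty$ bounds on $n,\na c$), and the transport commutators (controlled by Kato--Ponce estimates together with $\|\na u\|_\infty\lesssim\|u\|_{H^s}$) — are each dominated, after interpolation against the dissipation $\|\na D^s n\|_2^2+\|\na D^s u\|_2^2$ and Young's inequality, by $C\mathcal E_s+C$ with $C$ depending only on the constants already produced. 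Gr\"onwall on $[0,T]$ gives the asserted bound; combined with the local well-posedness Theorem~\ref{thm:local_well_posedness} and a standard continuation argument, the $H^s$ solution persists and stays bounded on $[0,T]$.

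\emph{Main obstacle.} The whole difficulty is the absorption in Step~1: showing that the \emph{merely logarithmic} control \eqref{S+n_condition}, without any smallness of the mass, suffices to beat the $L^1$-critical nonlinearity $\int n^{p+1}$. The resolution is that \eqref{S+n_condition} is precisely a uniform-in-time non-concentration statement, and the H\"older--Gagliardo--Nirenberg splitting converts it into an arbitrarily small constant in front of the scaling-critical quantity $\|\na n^{p/2}\|_2^2$; pinning down the H\"older exponent $\lambda$ so that this quantity appears to the power exactly $2$, while the complementary mass factor is made small by choosing the truncation level $K_\eta$ large, is the crux of the argument. The logistic closure, the $L^p\to L^\infty$ iteration, and the coupled $H^s$ energy estimates are thereafter obtained by well-established techniques.
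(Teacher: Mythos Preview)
Your strategy matches the paper's: use \eqref{S+n_condition} as a uniform non-concentration statement to absorb the $L^1$-critical nonlinearity in the $L^p$ energy estimates, Moser-iterate to $L^\infty$, then run $H^s$ energy estimates. Step~1 is correct; your integral-splitting variant is equivalent to the paper's truncation $(n-K)_+$, and your H\"older--GNS exponent bookkeeping (in particular $(b-1)(1-(p+1)\lambda)=p$) is right.

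The gap is in Step~2. You assert that the transport commutators, after Kato--Ponce and interpolation against the dissipation, are bounded by $C\mathcal E_s+C$ with $C$ depending only on already-produced constants. But Kato--Ponce gives $\|[D^\alpha,u\cdot\nabla]n\|_2\lesssim\|\nabla u\|_\infty\|n\|_{H^s}+\|u\|_{H^s}\|\nabla n\|_\infty$, and since you only have $\|\nabla u\|_\infty,\|\nabla n\|_\infty\lesssim\mathcal E_s^{1/2}$, the contribution is \emph{cubic}, of order $\mathcal E_s^{3/2}$, not linear. A bare Gr\"onwall then fails to close --- and even if it did, it would produce an $e^{CT}$ bound, whereas the theorem asserts $C_{H^s}$ depending only on $C_{L\log L},C_{u;L^2}$ and the initial norms, \emph{not} on $T$; this $T$-independence is essential downstream (it is what makes the growth rate in Theorem~\ref{thm:R2} be $\delta$ rather than $O(1)$). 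The paper closes this by induction on $s$: with $\|n\|_{H^{s-1}}+\|u\|_{H^{s-1}}\le C_{H^{s-1}}$ already in hand, every factor $\|\pa_x^\beta u\|_{L^p}$, $\|\na\pa_x^{\al-\beta}n\|_{L^q}$ is interpolated via GNS between $\dot H^{s+1}$ and $\dot H^1$, so the commutator contributes only $C(C_{H^{s-1}})\,\|n\|_{\dot H^s}(\|n\|_{\dot H^{s+1}}+\|u\|_{\dot H^{s+1}})$. After Young and the Nash-type bound $-\|f\|_{\dot H^{s+1}}^2\le -c\|f\|_{\dot H^s}^{2+2/s}/\|f\|_2^{2/s}$ (using the uniform $L^2$ bounds from Step~1 and \eqref{Energy_u_condition}), one obtains a logistic ODE $\tfrac{d}{dt}X\le -cX^{1+1/s}+CX+C$, which has a uniform-in-$T$ bound. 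Your one-shot scheme can be repaired along the same lines --- interpolate against the preliminary $\dot H^1$ bound you establish, and retain enough dissipation for a logistic rather than linear closure --- but as written, ``$C\mathcal E_s+C$, then Gr\"onwall'' does not deliver the stated conclusion.
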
  
We recall the standard procedure to check the criterion \eqref{S+n_condition} for the classical PKS equations. In the subcritical regime, i.e., $||n_0||_1<8\pi$, combining the decaying free energy \eqref{Free_Energy_dissipation} and the logarithmic-Hardy-Littlewood-Sobolev inequality \eqref{log HLS}  yields the uniform-in-time bound on the entropy
\begin{align}
\sup_t S[n(t)]:=&\sup_t\int_{\rr^2} n(t,x)\log n(t,x)dx=\sup_t\left(\int_{\rr^2} n(t,x)\log^+ n(t,x)dx-\int_{\rr^2}n(t,x)\log ^-n(t,x) dx\right)\\
=:& \sup_{t}(S^+[n(t)]-S^-[n(t)])<\infty.\label{Entropy}
\end{align} Here $\log^+,\, \log^-$ denote the positive part and the negative part of the logarithmic function, respectively. As a result, we observe that as long as the negative component of the entropy $S^-[n]$ is bounded, then criterion \eqref{S+n_condition} is checked. It is classical to apply the second moment $V$ \eqref{Second_moment} bound to estimate the negative part of the entropy $S^-[n]$ (see, e.g., inequality \eqref{S-n_control}). We summarize the above heuristics in the next theorem, with our system in consideration.  
\begin{thm}\label{thm:Second_moment_criterion}
Consider solutions $(n, u)$ to \eqref{pePKS-NS}  on the time interval $[0,T]$, subject to initial conditions $(n_0,u_0)\in (H^s(\rr^2),(H^s(\rr^2))^2), \, s\geq 3$, $n_0(1+|x|^2)\in L^1(\rr^2)$. If the initial mass is strictly less than $8\pi$, 
\begin{align}
M:=||n_0||&_{L^1(\mathbb{R}^2)}<8\pi,
\end{align} 
and the second moment is bounded on the time interval $[0,T]$, 
\begin{align}
V[n(t)]
\leq C_V<\infty,\quad\forall t\in [0,T],\label{Second_Moment_Bound}
\end{align}  
then the entropy bound \eqref{S+n_condition} and the energy bound \eqref{Energy_u_condition} hold, i.e.,
\begin{align}
\int_{\rr^2}n(t,x)\log^+n(t,x) dx+||u(t)||_2^2\leq C(C_V,M, E[n_0,u_0])<\infty, \quad \forall t\in[0,T].
\end{align} 
\end{thm}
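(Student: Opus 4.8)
The plan is to run the standard Patlak--Keller--Segel free-energy argument, adapted to the present coupled system: combine the dissipation of the free energy \eqref{Free_energy_PKSNS} furnished by Lemma \ref{Lem:Free_Energy_Decay} with the sharp logarithmic Hardy--Littlewood--Sobolev inequality \eqref{log HLS} to extract a time-uniform \emph{upper} bound on the entropy $\int n\log n$, and then invoke the assumed second-moment bound \eqref{Second_Moment_Bound} to control the negative part $S^-[n]$, which upgrades the entropy upper bound to the sought bound on $S^+[n]$ and, at the same time, delivers the energy bound on $u$.

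Concretely, the hypotheses $(n_0,u_0)\in H^s\times(H^s)^2$, $n_0(1+|x|^2)\in L^1$, together with \eqref{Second_Moment_Bound}, place $(n,u)$ in the class to which Lemma \ref{Lem:Free_Energy_Decay} applies (note $\int n(t)(1+|x|^2)\,dx = M+V[n(t)]\le M+C_V$ on $[0,T]$), so \eqref{Free_Energy_dissipation} gives $E[n(t),u(t)]\le E[n_0,u_0]$, i.e.
\begin{align*}
\int_{\rr^2} n(t)\log n(t)\,dx-\frac12\int_{\rr^2} n(t)c(t)\,dx+\frac12\|u(t)\|_2^2\le E[n_0,u_0],\qquad \forall t\in[0,T].
\end{align*}
The only term with an unfavorable sign is the interaction term. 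Using $c=-\frac{1}{2\pi}\log|\cdot|\ast n$ one has $\int nc\,dx=-\frac{1}{2\pi}\iint n(x)n(y)\log|x-y|\,dx\,dy$, and inserting this into \eqref{log HLS} with total mass $M$ produces an estimate of the form $\tfrac12\int nc\,dx\le \tfrac{M}{8\pi}\int n\log n\,dx+C(M)$, where $C(M)$ is the explicit (sharp) log-HLS constant. Substituting back and discarding the nonnegative term $\tfrac12\|u\|_2^2$,
\begin{align*}
\Bigl(1-\frac{M}{8\pi}\Bigr)\int_{\rr^2} n(t)\log n(t)\,dx\le E[n_0,u_0]+C(M),\qquad \forall t\in[0,T],
\end{align*}
and since $M<8\pi$ the coefficient $1-\frac{M}{8\pi}$ is strictly positive, which yields $\int_{\rr^2} n(t)\log n(t)\,dx\le K$ for a constant $K=K(M,E[n_0,u_0])$.

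Next I would control the negative entropy by the second moment. Splitting $\{n<1\}$ into $\{e^{-|x|^2}\le n<1\}$ (where $0\le-\log n\le|x|^2$, so that piece is bounded by $V[n]$) and $\{n<e^{-|x|^2}\}$ (where, using that $s\mapsto -s\log s$ is increasing on $(0,1/e)$, the integrand is dominated by $|x|^2 e^{-|x|^2}$ on $|x|\ge 1$ and by $1/e$ on the bounded set $|x|<1$), one obtains the classical estimate \eqref{S-n_control}, namely $S^-[n(t)]\le V[n(t)]+C_\ast\le C_V+C_\ast$ with $C_\ast$ an absolute constant; this is where \eqref{Second_Moment_Bound} is used. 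Then $S^+[n(t)]=\int n(t)\log n(t)\,dx+S^-[n(t)]\le K+C_V+C_\ast$, which establishes \eqref{S+n_condition}. Finally, since $\int n(t)\log n(t)\,dx\ge -S^-[n(t)]\ge -(C_V+C_\ast)$, feeding this lower bound back into the first displayed inequality (together with the log-HLS estimate on $\tfrac12\int nc$) gives $\tfrac12\|u(t)\|_2^2\le E[n_0,u_0]+C(M)+(1-\tfrac{M}{8\pi})(C_V+C_\ast)$, which is \eqref{Energy_u_condition}. All constants produced depend only on $C_V$, $M$ and $E[n_0,u_0]$.

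I expect no conceptual obstacle here -- the argument is the usual free-energy/log-HLS dichotomy of critical-mass PKS theory -- but two points deserve care. First, the positivity of $1-\frac{M}{8\pi}$ must be extracted using the \emph{sharp} log-HLS constant: this is precisely what singles out $8\pi$ as the threshold, and any non-optimal constant would shrink the admissible mass. Second, one should check that the stated hypotheses, plus the assumed second-moment bound on $[0,T]$, genuinely guarantee the integrability and regularity needed to apply Lemma \ref{Lem:Free_Energy_Decay} and the log-HLS inequality on all of $[0,T]$ -- in particular the finiteness of $\int n\log n$ and of $\int nc$ -- which is the only mildly technical ingredient.
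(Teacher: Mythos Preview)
Your proposal is correct and follows essentially the same route as the paper: free-energy dissipation (Lemma~\ref{Lem:Free_Energy_Decay}) combined with the sharp log-HLS inequality \eqref{log HLS} yields, under $M<8\pi$, a uniform upper bound on $S[n]$ and on $\|u\|_2^2$; then the second-moment hypothesis \eqref{Second_Moment_Bound} controls $S^-[n]$ via \eqref{S-n_control}, upgrading the entropy bound to a bound on $S^+[n]$. The only cosmetic difference is that the paper keeps $\tfrac12\|u\|_2^2$ in the inequality from the start and reads off both bounds at once, whereas you first discard it to isolate the entropy bound and then return for the energy bound in a second pass; both presentations give the same constants depending only on $C_V$, $M$, and $E[n_0,u_0]$.
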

The condition \eqref{Second_Moment_Bound} can be easily checked for the following two cases: a) solutions on the bounded domain $\mathbb{T}^2$ (Theorem \ref{thm:Torus}); b) radially symmetric solutions on $\rr^2$: 
\begin{cor}[{Plane $\rr^2$, Radially symmetric solutions}]\label{cor:radially_symmetric_case}
Consider the equation \eqref{pePKS-NS} subject to $H^s$ radially symmetric initial data $(n_0, u_0)\in H^s(\mathbb{R}^2)\times (H^s(\mathbb{R}^2))^2,\, s\geq 3$. 
{Further assume that the second moment is finite $\dss \int_{\rr^2}n_0|x|^2dx<\infty$.} If the initial mass $M:=||n_0||_{L^1(\mathbb{R}^2)}$ is strictly less than $8\pi$, i.e., $M<8\pi$, then the solution $(n, u)$ has bounded $H^s$ Sobolev norm for any finite time $t<\infty$. 
{On the other hand, if the total mass of the initial density $n_0$ is greater than $8\pi$, i.e., $||n_0||_{L^1(\rr^2)}>8\pi$, then the solution $(n, u)$ blows up in finite time.}
\end{cor}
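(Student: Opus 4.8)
The plan is to split into the subcritical and supercritical cases, and in both cases to work with the second moment $V[n(t)]=\int_{\rr^2} n|x|^2\,dx$ and exploit the key structural fact that, even with the active fluid coupling, the density equation still conserves mass and the quantity $V$ obeys a favorable ODE. First I would compute $\frac{d}{dt}V[n(t)]$ along \eqref{pePKS-NS}. The diffusion term contributes $4M$, the chemotactic term contributes the classical $-\frac{M^2}{2\pi}$ (using $c=-\frac{1}{2\pi}\log|\cdot|*n$ and symmetrizing the double integral), and the new term is the fluid transport $-\int_{\rr^2} (u\cdot\na n)|x|^2\,dx = \int_{\rr^2} n\,u\cdot\na|x|^2\,dx = 2\int_{\rr^2} n\,(u\cdot x)\,dx$. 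The crucial observation is that for radially symmetric data this last term \emph{vanishes}: radial symmetry is preserved by the system, $n(t,\cdot)$ stays radial, and the induced velocity field $u$ is then purely rotational (or zero) — more precisely $u\cdot x=0$ pointwise because the Biot--Savart/pressure structure forces $u$ to be tangential for radial forcing $n\na c$ (which is itself radial, hence curl-free up to the rotational component). Thus one recovers exactly the classical identity
\begin{align}
\frac{d}{dt}V[n(t)] = 4M - \frac{M^2}{2\pi} = 4M\left(1-\frac{M}{8\pi}\right).
\end{align}

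In the subcritical case $M<8\pi$ the right-hand side is a positive constant, so $V[n(t)]=V[n_0]+4M(1-\frac{M}{8\pi})t$ grows at most linearly and in particular is bounded on every finite interval $[0,T]$. Then I would invoke Theorem \ref{thm:Second_moment_criterion} to obtain the entropy bound \eqref{S+n_condition} and the energy bound \eqref{Energy_u_condition} on $[0,T]$, and finally Theorem \ref{thm:S+n_condition} to upgrade these to bounded $H^s$ norm on $[0,T]$; since $T<\infty$ is arbitrary this gives the claimed finite-time regularity. In the supercritical case $M>8\pi$ the right-hand side is a negative constant, so $V[n(t)]\le V[n_0]-c_0 t$ with $c_0>0$ would force $V$ to become negative in finite time, contradicting $V\ge 0$; hence the smooth solution cannot exist past time $V[n_0]/c_0$, i.e. it blows up in finite time. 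One should also note $V[n_0]<\infty$ is guaranteed by hypothesis and that local existence (Theorem \ref{thm:local_well_posedness}) plus a continuation criterion make the blow-up statement precise.

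The main obstacle I anticipate is justifying rigorously that the fluid transport contribution to $\frac{d}{dt}V$ truly vanishes — i.e. that radial symmetry of $n_0,u_0$ propagates and forces $u\cdot x\equiv 0$. This requires checking that the Navier--Stokes flow with a radial (gradient-type) forcing $n\na c$ preserves the class of rotationally symmetric, divergence-free vector fields with vanishing radial component; the forcing $n\na c$ is a radial vector field $f(|x|)\,x/|x|$, which is a gradient, so it is absorbed entirely into the pressure and exerts no net effect on $u$ beyond what the initial $u_0$ and viscosity dictate — and a rotational initial field stays rotational. A secondary technical point is ensuring all the integration-by-parts manipulations in the $\frac{d}{dt}V$ computation are licensed by the decay afforded by $n_0(1+|x|^2)\in L^1$ together with the regularity from Theorem \ref{thm:local_well_posedness}; this is routine but should be stated. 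For the blow-up half, the only subtlety is the standard one: translating "$V$ would go negative" into genuine finite-time blow-up via the local theory and a continuation criterion, exactly as in \cite{BlanchetEJDE06,JagerLuckhaus92}.
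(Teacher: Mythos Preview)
Your proposal is correct and follows essentially the same route as the paper: compute $\frac{d}{dt}V[n(t)]$, show the fluid contribution $2\int n\,(u\cdot x)\,dx$ vanishes for radially symmetric solutions, recover the classical identity $\frac{d}{dt}V=4M-\frac{M^2}{2\pi}$, and then invoke Theorem~\ref{thm:Second_moment_criterion} (subcritical) or the standard second-moment blow-up argument (supercritical). The only difference concerns your anticipated obstacle: rather than arguing via pressure absorption of the radial forcing, the paper writes $u=\nabla^\perp\phi$ for the (radially symmetric) stream function $\phi$ and observes directly that $x\cdot\nabla^\perp\phi=(x_1\partial_{x_2}-x_2\partial_{x_1})\phi\equiv 0$ --- a one-line resolution that sidesteps any discussion of how the Navier--Stokes forcing interacts with the pressure.
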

However, it is difficult to apply Theorem \ref{thm:Second_moment_criterion} to general solutions to \eqref{pePKS-NS} on the plane $\rr^2 $, since controlling second moment \eqref{Second_Moment_Bound} requires $||u||_\infty$ information, which is typically missing in the a-priori estimates. Here we develop a new method to check criterion \eqref{S+n_condition}:
 
We modify the free energy $E$ \eqref{Free_energy_PKSNS} so that the new negative component of the entropy $S^-[n]$ is bounded in terms of  the $L^1$ norm of the density $n$. As a result, there is no need for the second moment control. To this end, we replace the logarithmic function by its degree two Taylor approximation when the argument $n$ is smaller than designated threshold. The drawback is that the modified free energy can potentially grow slowly. However, this is enough to derive the $S^+[n]$ bound for any finite time. As a result, we end up with the exponential bounds with arbitrarily small growth rate in the $H^s$ Sobolev norms. Uniform-in-time bounds on the solutions are still open.  Details of this modified free energy can be found in Section \ref{Sec:R2}.
\begin{thm}\label{thm:modified_free_energy_result}
Consider regular solutions to the equation \eqref{pePKS-NS}, 
{subject to initial conditions $(n_0, u_0)\in H^s(\rr^2)\times( H^s(\rr^2))^2,\, s\geq 3, \, n_0(1+|x|^2)\in L^1(\rr^2)$}. If the initial mass is strictly less than $8\pi$, 
\begin{align}
M:=||n_0||&_{L^1(\mathbb{R}^2)}<8\pi,
\end{align} 
then the entropy bound \eqref{S+n_condition} and the energy bound \eqref{Energy_u_condition} hold on any finite time interval $[0,T]\subset[0,\infty)$. Moreover, for any small constant $\delta>0$, there exists a constant $C(E[n_0,u_0],M,\delta)$ such that
\begin{align}\label{Linear_bound_on_S_+}
S^+[n(t)]+||u(t)||_2^2\leq &C(E[n_0,u_0],M,\delta)+\delta t,\quad \forall t\in [0,\infty).
\end{align} 
\end{thm}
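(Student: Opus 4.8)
The plan is to run the free-energy argument of Theorem \ref{thm:Second_moment_criterion}, but with a \emph{modified} free energy whose negative entropy part is controlled by the mass alone, so that the uncontrolled second moment $V[n]$ is never needed. Fix a small threshold $K\in(0,1)$, to be taken proportional to $\delta$, and let $\beta_K(s)=s\log s$ for $s\ge K$ and $\beta_K(s)=(\log K-1)s+\tfrac1K s^2$ for $0\le s<K$; this is the unique $C^1$ modification that is quadratic with no constant term near $0$ and matches $s\log s$ in value and derivative at $s=K$. One checks $\psi_K:=\beta_K-s\log s\ge 0$, supported on $\{s<K\}$, and sets
\[
\tilde E[n,u]:=\int_{\rr^2}\beta_K(n)\,dx-\tfrac12\int_{\rr^2}nc\,dx+\tfrac12\int_{\rr^2}|u|^2\,dx=E[n,u]+\int_{\rr^2}\psi_K(n)\,dx .
\]
Writing $\tilde S[n]=\int\beta_K(n)$ and $\tilde S^{\pm}$ for its positive and negative parts, the first point — and this is exactly where the hypothesis on $V$ is dispensed with — is that $\tilde S^-[n]\le C_0(M,K)$ depends only on $M=\|n\|_1$ and on $K$: on $\{n<K\}$ one has $\beta_K(n)\ge(\log K-1)n$, so the negative part there is $\le(1-\log K)n$ and integrates to $\le(1-\log K)M$; on $\{K\le n<1\}$ one has $|\beta_K(n)|=|n\log n|\le e^{-1}$ and $|\{n\ge K\}|\le M/K$; and there is no negative contribution from $\{n\ge1\}$. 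In the same bookkeeping $\tilde S^+[n]\ge S^+[n]$, and from $\psi_K\ge0$ one gets $\tilde S[n]\ge\int n\log n$.

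The second and central step is the differential inequality $\tfrac{d}{dt}\tilde E\le 2KM$. Using Lemma \ref{Lem:Free_Energy_Decay} for $\tfrac{d}{dt}E$ and differentiating $\int\psi_K(n)$ along \eqref{pePKS-NS}, the transport term $-\int\psi_K'(n)\,u\cdot\na n$ vanishes since $\na\cdot u=0$, so
\[
\frac{d}{dt}\int\psi_K(n)\,dx=-\int\psi_K''(n)\,|\na n|^2\,dx+\int n\,\psi_K''(n)\,\na n\cdot\na c\,dx,\qquad \psi_K''(s)=\Big(\tfrac2K-\tfrac1s\Big)\mathbf 1_{\{s<K\}} .
\]
The function $\psi_K''$ is not sign-definite, so the first term cannot be discarded; the saving feature is that the singular part $\int_{n<K}|\na n|^2/n$ it produces is \emph{exactly cancelled} by the corresponding piece of the chemotactic dissipation $\int n|\na\log n-\na c|^2$ once the square is expanded (using $n\,\na\log n=\na n$ and $-\de c=n$). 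After this cancellation only the manifestly non-positive terms $-\int|\na u|^2$, $-\tfrac2K\int_{n<K}|\na n|^2$, $-\int_{n<K}n|\na c|^2$ remain, together with the cross term $\int_{n<K}\big(1+\tfrac{2n}{K}\big)\,\na n\cdot\na c$; since $\na(\min(n,K))=\mathbf 1_{\{n<K\}}\na n$ a.e., integrating by parts and using $-\de c=n$ turns it into $\int\min(n,K)\,n\,dx+\tfrac1K\int\min(n,K)^2\,n\,dx$, whose integrand is pointwise $\le 2Kn$, hence the whole thing is $\le 2KM$. Choosing $K=\min\{\tfrac12,\tfrac{\delta}{2M}\}$ gives $\tfrac{d}{dt}\tilde E\le\delta$, so $\tilde E[n(t),u(t)]\le\tilde E[n_0,u_0]+\delta t$. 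All integrations by parts are legitimate for $H^s$, $s\ge3$, solutions, and — exactly as in Lemma \ref{Lem:Free_Energy_Decay} — one only needs $V[n(t)]<\infty$ on the existence interval (not its uniform boundedness), which follows from a Gr\"onwall estimate for $V'$.

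The third step closes the estimate. The logarithmic Hardy--Littlewood--Sobolev inequality \eqref{log HLS} gives $\tfrac12\int nc\le\tfrac{M}{8\pi}\int n\log n+C_1(M)\le\tfrac{M}{8\pi}\tilde S[n]+C_1(M)$; inserting this into the definition of $\tilde E$ and using $M<8\pi$,
\[
\Big(1-\frac{M}{8\pi}\Big)\tilde S[n]+\tfrac12\|u\|_2^2\le\tilde E[n,u]+C_1(M) .
\]
Since $\tilde S\ge\tilde S^+-\tilde S^-\ge S^+-C_0(M,K)$ and $\tilde S^+\ge S^+$, dividing by $\min\{1-\tfrac{M}{8\pi},\tfrac12\}$ yields $S^+[n(t)]+\|u(t)\|_2^2\le C(E[n_0,u_0],M,\delta)+\delta' t$, the constant being finite because $\tilde E[n_0,u_0]=E[n_0,u_0]+\int\psi_K(n_0)\le E[n_0,u_0]+\int n_0\log^- n_0\,dx<\infty$ by the hypothesis $n_0(1+|x|^2)\in L^1(\rr^2)$, and $\delta'=\delta/\min\{1-\tfrac{M}{8\pi},\tfrac12\}$ is again an arbitrarily small constant after relabelling. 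This is precisely \eqref{Linear_bound_on_S_+}; restricting to any finite $[0,T]$ it yields \eqref{S+n_condition} and \eqref{Energy_u_condition}, after which Theorem \ref{thm:S+n_condition} supplies the $H^s$ bounds.

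I expect the second step to be the only genuine difficulty. Once one insists on a $C^1$ quadratic patch with no constant term the choice of $\beta_K$ is forced, and the delicate points are (i) the exact cancellation of the $\int_{n<K}|\na n|^2/n$ contributions, which is what prevents the sign-indefinite term $-\int\psi_K''(n)|\na n|^2$ from being uncontrollable, and (ii) recognizing that the remaining cross terms are total derivatives, so the relation $-\de c=n$ can be used to bound them by a multiple of $KM$. The entropy bookkeeping for $\tilde S^{\pm}$, the log-HLS step, and the continuation argument are routine given the results already assembled in the paper.
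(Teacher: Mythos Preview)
Your proof is correct and follows the same overall strategy as the paper: replace $n\log n$ by a modification that is bounded from below, show the resulting modified free energy grows at most linearly with rate proportional to the truncation level, and close with the logarithmic HLS inequality exactly as in Theorem~\ref{thm:Second_moment_criterion}.

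The execution differs in two respects worth noting. First, the paper modifies $\log n$ (not $n\log n$) by its degree-two Taylor expansion at $n=\eta$, so the replacement for $n\log n$ is \emph{cubic} near $0$ and $C^2$, whereas your $\beta_K$ is a $C^1$ quadratic patch; your choice makes $\psi_K''$ simpler but sign-indefinite, which you handle by the cancellation you describe. Second, and correspondingly, the paper computes $\tfrac{d}{dt}E_\Gamma$ directly and establishes the sign of the dissipation by completing a square in $|\nabla n|$ and $\sqrt{n}\,|\nabla c|$ on $\{n<\eta\}$, while you write $\tilde E=E+\int\psi_K(n)$, invoke Lemma~\ref{Lem:Free_Energy_Decay} for the $E$ part, and obtain the sign from the exact cancellation of $\int_{n<K}|\nabla n|^2/n$ against the corresponding piece of $\int n|\nabla\log n-\nabla c|^2$. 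Both computations reduce to the same key observation, namely $\int_{n<K}\nabla n\cdot\nabla c=\int\min(n,K)\,n\le KM$ (the paper's ``Claim''~\eqref{Claim}), and both deliver the linear bound and \eqref{Linear_bound_on_S_+} in the same way.
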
  
From the linearly growing bound on the positive component of  the entropy $S^+[n]$ and the energy $||u(t)||_2^2$, one can derive the exponential-in-time bound on the $H^s$-Sobolev norms \eqref{Exponential_H_s} through standard energy estimates. This concludes the proof of Theorem \ref{thm:R2}. 

In general, the long time asymptotic behavior of the solution to \eqref{pePKS-NS} is not clear. However, for radially symmetric solutions, we have the following description.
\begin{thm}\label{thm:radial_long_time}
Consider {radially symmetric solutions} to the equation \eqref{pePKS-NS}  subject to the subcritical mass constraint $||n_0||_1<8\pi$ and the  conditions in Corollary  \ref{cor:radially_symmetric_case}.  The $L^2$-norms of the solutions undergoes polynomial decay in the sense that
\begin{align}\label{Polynomial_decay}
||n(t)||_{L^2}^2+||\mathrm{curl}u(t)||_{L^2}^2\leq \frac{C}{1+2t},\quad \forall t\in[0,\infty),
\end{align}
where $C$ is a constant depending on the initial data. 
\end{thm}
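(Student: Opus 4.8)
\emph{Sketch.} The strategy is to use radial symmetry to decouple the fluid from the aggregation mechanism, pass to self-similar variables, and then close a logistic differential inequality for the rescaled $L^2$ norms. \textbf{Step 1 (radial reduction).} Since \eqref{pePKS-NS} is equivariant under rotations, a radial $H^s$ solution stays radial, and a divergence-free radial $H^s$ vector field on $\rr^2$ must be a pure swirl $u=u^\theta(|x|)\,\mathbf e_\theta$; hence $u\cdot x\equiv 0$ and $u\cdot\na c\equiv 0$. Moreover $n\na c$ is a central (hence curl-free, in fact gradient) vector field, so it can be absorbed into the pressure: $u$ obeys the plain Navier--Stokes equation and $\om:=\mathrm{curl}\,u$ solves the passive drift--diffusion equation $\pa_t\om+u\cdot\na\om=\de\om$, whence $\|\om(t)\|_{L^1}$ is non-increasing. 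The transport term $u\cdot\na n=\na\cdot(un)$ affects neither the $L^p$ norms nor the moments of $n$; in particular the virial identity is the classical one, $\frac{d}{dt}\int n|x|^2\,dx=4M(1-\tfrac{M}{8\pi})$, so $V[n(t)]=V[n_0]+4M(1-\tfrac{M}{8\pi})\,t$ grows exactly linearly.

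\textbf{Step 2 (self-similar free energy).} Because $u\cdot\na c\equiv 0$, the \emph{classical} Patlak--Keller--Segel free energy $\int n\log n-\tfrac12\int nc$ is dissipated along the flow. Introduce $R(t)=\sqrt{1+2t}$, $\tau=\log R$, $v(\tau,y)=R^2 n(t,Ry)$ and $\tilde\om(\tau,y)=R^2\om(t,Ry)$, so that $v$ solves the confined system $\pa_\tau v+U\cdot\na v+\na\cdot(v\na c_v)=\de v+\na\cdot(yv)$ and $\tilde\om$ solves $\pa_\tau\tilde\om+U\cdot\na\tilde\om=\de\tilde\om+\na\cdot(y\tilde\om)$, with $U$ divergence-free and radial. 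A direct computation (parallel to Lemma~\ref{Lem:Free_Energy_Decay}, with the swirl terms $U\cdot\na c_v$ and $U\cdot y$ dropping out by radiality) shows that the rescaled free energy $\mathcal G[v]:=\int v\log v-\tfrac12\int v c_v+\tfrac12\int v|y|^2$ is non-increasing. By Step~1 the rescaled second moment $\int v(\tau)|y|^2\,dy=V[n(t)]/(1+2t)$ is bounded uniformly in $\tau$; combining this with the logarithmic Hardy--Littlewood--Sobolev inequality gives $\mathcal G[v]\ge(1-\tfrac{M}{8\pi})\int v\log v+\tfrac12\int v|y|^2-C(M)$, and since $M<8\pi$ we obtain a uniform lower bound $\mathcal G[v(\tau)]\ge\mathcal G_\ast>-\infty$. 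Monotonicity pins $\mathcal G[v(\tau)]$ in $[\mathcal G_\ast,\mathcal G[v(0)]]$, and feeding this back into the same inequality (together with the second-moment bound, used to control $\int_{\{v<1\}}v\log(1/v)$ by a Gaussian comparison) yields $\sup_{\tau\ge 0}\int v(\tau)\log^+v(\tau)\,dy<\infty$.

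\textbf{Step 3 (logistic inequality and conclusion).} Testing the $v$-equation against $v$ gives $\frac{d}{d\tau}\|v\|_2^2=-2\|\na v\|_2^2+\|v\|_3^3+2\|v\|_2^2$. A truncation argument using only $\|v\|_1=M$ and the uniform entropy bound of Step~2 shows that for each $\ep>0$ there is $C_\ep$, independent of $\tau$, with $\|v\|_3^3\le\ep\|\na v\|_2^2+C_\ep$; taking $\ep=1$ and absorbing $2\|v\|_2^2$ via Nash's inequality $\|v\|_2^2\le C_N\|v\|_1\|\na v\|_2$ leaves $\frac{d}{d\tau}\|v\|_2^2\le-\tfrac12\|\na v\|_2^2+C$, and a second use of Nash in the form $\|\na v\|_2^2\ge\|v\|_2^4/(C_N^2 M^2)$ converts this to $\frac{d}{d\tau}\|v\|_2^2\le-a\|v\|_2^4+b$ with fixed $a,b>0$; such a logistic inequality forces $\sup_{\tau\ge 0}\|v(\tau)\|_2^2<\infty$. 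The same argument — now simpler, the Gagliardo--Nirenberg step being unnecessary — applied to $\tilde\om$, using $\|\tilde\om(\tau)\|_1=\|\om(t)\|_1\le\|\om_0\|_1$, gives $\sup_{\tau\ge 0}\|\tilde\om(\tau)\|_2^2<\infty$ (alternatively, $\|\om(t)\|_2^2\le C/(1+t)$ follows directly from $\frac{d}{dt}\|\om\|_2^2=-2\|\na\om\|_2^2$ and Nash). Unravelling the scaling, $\|n(t)\|_2^2+\|\om(t)\|_2^2=R(t)^{-2}\bigl(\|v(\tau)\|_2^2+\|\tilde\om(\tau)\|_2^2\bigr)\le C/(1+2t)$, which is \eqref{Polynomial_decay}.

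\textbf{Main obstacle.} The delicate point is Step~2: one must verify that the radial structure genuinely kills every fluid-generated term (the swirl $U$, and the pressure now carrying $n\na c$) in the dissipation of $\mathcal G$, so that the classical free energy functional rather than the coupled one of \eqref{Free_energy_PKSNS} governs the rescaled dynamics, and one must do the bookkeeping in the logarithmic Hardy--Littlewood--Sobolev inequality with the confinement term $\tfrac12\int v|y|^2$ so that $M<8\pi$ is precisely what makes $\mathcal G$ bounded below. The downstream truncated Gagliardo--Nirenberg estimate and the ODE comparison are routine.
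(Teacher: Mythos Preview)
Your proof is correct and follows the same self-similar-variable strategy as the paper, but with a genuinely different organizing observation. You begin by noting that in the radial setting $n\nabla c$ is a gradient (hence absorbed into the pressure) and $\nabla^\perp\cdot(n\nabla c)\equiv 0$, so the vorticity equation becomes the \emph{unforced} drift--diffusion equation $\pa_t\omega+u\cdot\nabla\omega=\Delta\omega$; this lets you treat the rescaled density with the \emph{classical} confined PKS free energy $\mathcal G[v]=\int v\log v-\tfrac12\int vc_v+\tfrac12\int v|y|^2$ and handle $\omega$ separately by a bare Nash-inequality argument. The paper instead keeps the coupled rescaled free energy $E_S[N,\Omega]=\int N\log N-\tfrac12 NC+\tfrac12 N|X|^2-\tfrac12\Psi\Omega$ and verifies, term by term, that the fluid--density cross terms in its dissipation vanish by radial orthogonality (their $T_{s;1},T_{s;2},T_{s;3}$); the $L^2$ bound on $\Omega$ is then obtained by invoking the full machinery of Theorems~\ref{thm:S+n_condition}--\ref{thm:Second_moment_criterion}. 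Your decoupling makes the structure more transparent---in the radial case the system literally splits into radial PKS plus free 2D Navier--Stokes---and yields a shorter, more self-contained argument for the vorticity decay; the paper's route stays closer to the general coupled framework, which may be preferable if one anticipates perturbing away from exact radiality.
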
 
\begin{rmk}
By applying the same argument as in the proof of Theorem \ref{thm:S+n_condition}, we obtain that the $H^s$ norms of the solutions are uniformly bounded in time. 
\end{rmk}

The paper is organized as follows: In Section \ref{Sec:R2}, we treat the planar case and prove Theorem \ref{thm:R2}, Theorem \ref{thm:S+n_condition}, Theorem \ref{thm:Second_moment_criterion}, Corollary \ref{cor:radially_symmetric_case} and Theorem \ref{thm:radial_long_time}. In Section \ref{Sec:T2}, we treat the torus case and prove Theorem \ref{thm:Torus}.

\noindent
\textbf{Notation:} Throughout the paper, the constants $B,C$ are changing from line to line. However, the constants $C_{(\cdot)}$, e.g., $C_{L^2},\, C_{L\log L}$ will be defined and fixed unless otherwise stated. An exception of this rule is the constants $C_{GNS}$ and $C_N$, they are the constants appeared in the Gagliardo-Nirenberg-Sobolev inequalities and the Nash inequalities and are changing from line to line. 


We denote  $\mathbb{P}$ as the Leray projection, i.e.,
\begin{align}\label{Leray_Projection}
\mathbb{P}u=u-\na \de^{-1}(\na \cdot u).
\end{align}
Here the operator should be understood as the pseudo-differential operators. Explicitly speaking, for vector field $u=(u^1,u^2)$, we have
\begin{align}
\mathbb{P}u^i(x)=\left(\sum_{j=1}^2 \bigg(\delta_j^{i}-\frac{k_i k_j}{|k|^2}\bigg)\widehat{u^j}(k)\right)^\vee, \quad i \in\{1,2\},
\end{align}
where $\widehat{(\cdot)}$ and the $(\cdot)^\vee$ denote the Fourier transform and inverse transform on the plane $\rr^2$ or the torus $\mathbb{T}^2$ respectively, and the $\delta_j^i$ is the Kronecker delta function. Further properties of the Leray projection include that it is a self-adjoint Fourier multiplier and it is a continuous map from $L^2$ to $L^2$.  
Now we define the Stokes operator as $\mathbb{P}(-\de)$. Furthermore, we define the bilinear form
\begin{align}
B(u,v)=\mathbb{P}((u\cdot \na) v).
\end{align}
Properties of these operators can be found in classical literature, e.g., Chapter 2 of \cite{KuksinShirikyan12}.

The following multi-index notation is adopted:
\begin{align}
\pa_x^\al =\pa_{x_1}^{\al_1}\pa_{x_2}^{\al_2},\quad |\al|=|\al_1|+|\al_2|.
\end{align} 
Moreover, we denote $\beta<\al$ if $\beta_1\leq\al_1$, $\beta_2\leq\al_2$, and at least one of the inequalities is strict. 

Recall the classical $L^p$ norms and Sobolev $H^s$ norms:
\begin{align}
||f||_{L_x^p}=&||f||_p=\left(\int |f|^p dx\right)^{1/p};\quad ||f||_{L_t^q([0,T]; L^p_x)}=\left(\int_0^T||f(t,x)||_{L_x^p}^qdt\right)^{1/q};\\
||f||_{H_x^s}=&\left(\sum_{|\al|\leq s}||\pa_x^\al f||_{L_x^2}^2\right)^{1/2};\quad
||f||_{\dot H_x^s}=\left(\sum_{|\al|=s}||\pa_x^\al f||_{L_x^2}^2\right)^{1/2};\quad||\na^i f||_{L^2}=\left(\sum_{|\al|=i}||\pa_x^\al f||_{L^2}^2\right)^{1/2}.
\end{align}
\ifx 
By Lemma 23 on 
https://terrytao.wordpress.com/2009/04/30/245c-notes-4-sobolev-spaces/
, we have that $C_c^\infty$ functions are dense in the space $H^s(\rr^n)$. As a result, we can approximate the $H^s$ function by $C_c^\infty$ function and make the difference approaches zero. There should be no problem in integration by parts if the function is $H^s(\rr^2),\, s\geq 3$. A detailed argument is as follows: Let $f,g\in H^1(\rr^2)$, $\widetilde{f}, \widetilde{ g}\in C_{c}^\infty(\rr^2)$ such that $||f-\widetilde{f}||_{H^1}\leq \ep,\quad ||g-\widetilde{g}||_{H^1}\leq \ep$. Now we have that
\begin{align}
\int f\pa_x g dx &\leq \int \widetilde{f}\pa_x \widetilde{g}dx+|\int f\pa_x (g-\widetilde{g})dx+\int (f-\widetilde{f})\pa_x(\widetilde{g})dx|\leq \int\widetilde{f}\pa_x \widetilde{g}dx+||f||_{L^2}||g-\widetilde{g}||_{H^1}+||g||_{H^1}||f-\widetilde {f}||_{L^2}\\
&\leq \int \widetilde{f}\pa_x \widetilde{g}dx +C(f,g)\ep=-\int \pa_x \widetilde{f} \widetilde{g}dx+ C(f,g)\ep\\
&\leq- \int \pa_x f g dx+||\widetilde f||_{H^1}||g-\widetilde{g}||_2+||f-\widetilde{f}||_{H^1}|| g||_2+C(f,g)\ep\leq -\int \pa_x f g dx+2C(f,g )\ep.
\end{align}
Since $\ep$ is arbitrary, we have that
\begin{align}
\int f\pa_x g dx\leq -\int \pa_x fg dx. 
\end{align}
Similar approximation argument yields that
\begin{align}
\int f\pa_x gdx\geq -\int \pa_x fg dx.
\end{align}
To conclude, we have that 
\begin{align*}
\int f\pa_x g dx=-\int \pa_xf gdx,\quad f,g\in H^1(\rr^2).
\end{align*}\fi
\section{Planar Case: $\mathbb{R}^2$}\label{Sec:R2}
The section is organized as follows. We first prove  Theorem \ref{thm:S+n_condition}. The proof will serve as a prototype for our later analysis on the torus $\Torus^2$. Next we prove Theorem \ref{thm:Second_moment_criterion}, which assumes that the cell density $n$ has bounded second moment on the time interval $[0,T]$. Then we prove Corollary \ref{cor:radially_symmetric_case} by showing that the second moment bound \eqref{Second_Moment_Bound} is checked in the radially symmetric setting. Finally, we introduce the modified free energy to prove Theorem \ref{thm:R2}.
 
\begin{proof}[Proof of Theorem \ref{thm:S+n_condition}]
In the proof, we focus on deriving the a-priori estimates for the $H^s,\, s\geq 3$ Sobolev norms of the solutions $(n,u)$. Then by a standard limiting procedure and contraction mapping argument, one can deduce the existence and uniqueness of the solutions to the equation \eqref{pePKS-NS}. The proof is decomposed into steps. 

\noindent
\textbf{Step \# 1: $L^p$ estimate of the density $n$.}
First we recall that due to the divergence structure of the cell density equation in  \eqref{pePKS-NS}, the total mass of the cells are conserved along the dynamics. Therefore, we set $M:=||n(t)||_1=||n_0||_1$. In order to estimate the $L^p,\, p>1$ norm of the density $n$, we decompose it as follows:
\begin{align} n=(n-K)_++\min \{n, K\},\quad K>1.
\end{align}
Since $\min\{n,K\}$ has bounded $L^p$ norm, it is enough to estimate the size of $(n-K)_+$. To this end, define the following quantity:
\begin{align}
\eta_K:=\int_{\rr^2}(n-K)_+ dx.
\end{align}
Since the positive part of the entropy is bounded on the interval $[0,T]$ \eqref{S+n_condition}, direct estimation yields that
\begin{align}\label{small_eta_K}
\eta_K\leq \int_{\rr^2} (n-K)_+\frac{\log^+ n}{\log K}dx\leq \frac{C_{L\log L}}{\log K}.
\end{align} 
As a result, if we choose the vertical cut-off level $K$ large enough, the $\eta_K$ can be made arbitrarily small. Next we combine the smallness of $\eta_K$ \eqref{small_eta_K}, the divergence free condition of the fluid vector field $u$, the Gagliardo-Nirenberg-Sobolev inequality and the Nash inequality to estimate the time evolution of the $L^{2}$ norm of the truncated density $(n-K)_+$ as follows:
\begin{align}
\frac{1}{2}\frac{d}{dt}||(n-K)_+||_{2}^{2}\leq&-\int |\na (n-K)_+|^2 dx +\frac{1}{2}\int (n-K)_+^{3}dx
+\frac{3}{2}K\int (n-K)_+^2 dx+K^2M\\
\leq&-(1-C_{GNS}\eta_K)||\na (n-K)_+||_{2}^2+2K||(n-K)_+||_{2}^{2}+K^2M\\
\leq&-\frac{1}{2}||\na (n-K)_+||_{2}^2+2K||(n-K)_+||_{2}^{2}+K^2M\\
\leq &-\frac{1}{2C_N M^2}||(n-K)_+||_2^4+2K||(n-K)_+||_2^2+K^2M.\label{Proof_of_n_L_2_0}
\end{align}
As a result, we see that \begin{align}
||n(t)||_{2}\leq ||(n(t)-K)_+||_2+||\min\{n(t),K\}||_2\leq C(||n_0||_2,C_{N}, M, K)+K^{1/2}M^{1/2},\quad \forall t\in [0,T].
\end{align}
Since in the estimation above, we choose $K$ such that 
\begin{align}
\frac{C_{L\log L}}{\log K}\leq \frac{1}{2C_{GNS}},
\end{align}
we have that $K$ can be any constant greater than $\exp\{2C_{GNS}C_{L\log L}\}.$ To conclude, we have that
\begin{align}\label{Proof_of_n_L2}
||n(t)||_2\leq C_{L^2}(||n_0||_{2},M,C_{L\log L})<\infty,\quad \forall t\in[0,T].
\end{align}
  
Direct estimation of the time evolution of the $L^4$ norm of the cell density $n$ with the $L^2$ bound on the cell density  $n$ \eqref{Proof_of_n_L2}, the Gagliardo-Nirenberg-Sobolev equality, and the Nash inequality yields,
\begin{align}
\frac{1}{4}\frac{d}{dt}||n||_{4}^4 \leq& -\frac{3}{4}||\na (n^2)||_{{2}}^2+\frac{3}{4}||n^2||_{{5/2}}^{5/2}\\
\leq&-\frac{3}{4}||\na (n^2)||_{2}^2+C_{GNS}||\na (n^2)||_{2}^{1/2}||n^2||_{2}^2\\
\leq& -\frac{||n^2||_{2}^4}{C_{N}||n^2||_{1}^2}+C_{GNS}||n^2||_{2}^{8/3}\\
\leq&-\frac{||n||_{4}^8}{C_{N}C_{L^2}^4}+C_{GNS}||n||_{4}^{16/3}.
\end{align}Therefore we obtain that 
\begin{align}
||n(t)||_{4}\leq C_{L^4}(||n_0||_4,C_{L^2}(||n_0||_2, M, C_{L\log L}))<&\infty,\quad \forall t\in[0,T].
\end{align}

Combining the Morrey's inequality, the Calderon-Zygmund inequality, and the $L^p$ bounds of the density $n$ \eqref{C2infty} yields that 
\begin{align}
||\na c(t)||_{L^\infty(\mathbb{R}^2)}\leq& C\norm{n(t)}_{L^3(\rr^2)}\leq C_{\na c;\infty}(C_{L^4}, M)<\infty,\quad \forall t\in [0,T].\label{na_c_L_infty}
\end{align}
Since the vector field $u$ is divergence free, the fluid transport term $u\cdot \na n$ has no impact on the direct $L^p$ energy estimate on the cell density $n$. Now by the standard Moser-Alikakos iteration, we have that there exists a finite constant $\CC$ such that the $L^p$ norms are bounded as follows
\begin{align}\label{C2infty}
||n(t)||_{L^1\cap L^\infty}\leq C_{1,\infty}(||n_0||_{L^1\cap L^\infty},C_{L\log L})<\infty,\quad \forall t\in[0,T].
\end{align}For the iteration argument in the classical Patlak-Keller-Segel equation setting, we refer the readers to the Lemma 3.2 in \cite{CalvezCarrillo06} or the paper \cite{Kowalczyk05}. For the Patlak-Keller-Segel equation subject to ambient divergence free vector fields, we refer to the appendix of \cite{KiselevXu15}.

\noindent
\textbf{Step \# 2: $H^s$ estimate of the density $n$ and the velocity $u$.} Before estimating the $\dot H^1$ norms of the solutions $(n,u)$, we present two estimates on the chemical gradient $\na c$. Combining the $L^p$ boundedness of the Riesz transform for $p\in(1,\infty)$ on $\rr^2$ and the $L^p$-bounds of the density $n$ \eqref{C2infty} yields that
\begin{align}
||\na^2 c||_2=||\na^2(-\de)n||_2\leq C ||n||_2\leq C\CC,\quad
||\na^2 c||_4=||\na^2(-\de)n||_4\leq C ||n||_4\leq C\CC.\label{na2c_R2}
\end{align} 
After these preparation, we first estimate the $\dot H^1$ norm of the velocity fields $u$. We apply the Leray projection $\mathbb{P}$ \eqref{Leray_Projection} on the fluid equation \eqref{pePKS-NS} to eliminate the pressure term and end up with the following, 
\begin{equation}\label{eqn:flow}
    \partial_t u+B(u,u)=\Delta u+\mathbb{P}(n\nabla c),\quad B(u,u):=\mathbb{P}((u\cdot \nabla)u).
\end{equation}
Here we use the fact that $\mathbb{P}u=u$ since $u$ is divergence free. Moreover, since the symbol of $\mathbb{P}$ is bounded, the projection $\mathbb{P}$ maps $L^2$ space to $L^2$ space.  We also recall the classical identity: for divergence-free $u\in L^2\cap H^2$, 
\begin{align}
\int B(u,u)\cdot\de udx=0.\label{B_identity}
\end{align}
The proof of the identity, which involves the stream function of $u$, can be found in \cite{KuksinShirikyan12} Lemma 2.1.16. The $H^2$-regularity required by this equality is guaranteed by the local well-posedness theorem \ref{thm:local_well_posedness}. 
Now we estimate the time evolution of the $\dot H^1$ seminorm of the velocity $u$ with the equality \eqref{B_identity}, the  divergence-free condition of $u$, the self-adjoint property of $\mathbb{P}$,  the Gagliardo-Nirenberg-Sobolev inequality, the chemical gradient estimates \eqref{na_c_L_infty}, \eqref{na2c_R2}, and the $L^p$ controls of the cell density $n $ \eqref{C2infty} as follows:
\begin{align*}
\frac{1}{2}\frac{d}{dt}\sum_{j=1}^2||\pa_{x_j} u||_2^2=&-\sum_{j=1}^2\sum_{k=1}^2\int |\pa_{x_k} \pa_{x_j} u|^2 dx-\sum_{j=1}^2\int \pa_{x_j} B(u,u) \cdot \pa_{x_j} u dx+\sum_{j=1}^2\int \pa_{x_j} \mathbb{P}( n\na c)\cdot \pa_{x_j} u dx\\
\leq &-\frac{1}{2}||\na^2 u||_2^2+C ||\na^2 u||_2||  n||_2||\na c||_\infty \\
\leq&-\frac{||\na u||_2^4}{2C_{GNS}||u||_2^2}+C|| n||_2^2C_{1,\infty}^2 .
\end{align*}
As a result, we recall the assumption \eqref{Energy_u_condition} and obtain  that
\begin{align}
||\na u(t)||_{L_x^2}\leq C_{u;H^1}(C_{u;L^2},||\na u_0||_2, ||n_0||_{L^1\cap L^\infty}),\quad \forall t\in[0,T].\label{na_u_L2}
\end{align}
Similarly, we estimate the time evolution of the $\dot H^1$ seminorm of $n$ using the divergence free property of $u$,  the Gagliardo-Nirenberg-Sobolev inequality, the chemical gradient estimate \eqref{na_c_L_infty}, \eqref{na2c_R2}, the $\na u$ bound \eqref{na_u_L2}, and the $L^2$ bound of the density $n$ \eqref{Proof_of_n_L2} as follows:
\begin{align*}
\frac{1}{2}\frac{d}{dt}||\na n||_2^2\leq&-\frac{1}{2}{||\na ^2 n||_2^2}+||\na n||_4^2||\na u||_2+||\na^2 n||_2||\na n||_2||\na c||_\infty+||\na^2n||_2|| n||_4||\na^2c||_4\\
\leq&-\frac{1}{2}{||\na ^2 n||_2^2}+C||\na^2 n||_2||\na n||_2||\na u||_2+ ||\na^2 n||_2||\na n||_2||\na c||_\infty+C||\na^2n||_2|| n||_2||\na n||_2 \\
\leq&-\frac{1}{2}{||\na ^2 n||_2^2}+\frac{1}{4}||\na^2 n||_2^2+C\left(||\na u||_2^2+ ||\na c||_\infty^2+||n||_2^2\right)||\na n||_2^2\\
\leq&-\frac{||\na n||_2^4}{4C_{GNS}||n||_2^2}+C\left(C_ {u;H^1}^2+C_{\na c;\infty}^2+C_{L^2}^2 \right)||\na n||_2^2.
\end{align*} 
Now by standard ODE theory , we obtain that
\begin{align*} 
 ||\na n(t)||_2^2\leq& C\left(C_ {u;H^1}^2+C_{\na c;\infty}^2+C_{L^2}^2 \right)C_{L^2}^2+||\na n_0||_2^2,\quad \forall t\in[0,T].
\end{align*}
Combining this with \eqref{Proof_of_n_L2},  \eqref{na_c_L_infty} and  \eqref{na_u_L2} yields 
\begin{align}\label{H_1_bound}
||\na n(t)||_2+||\na u(t)||_2\leq&C_{H^1}(
C_{L\log L},C_{u;L^2},||n_0||_{L^1\cap L^\infty},
 ||n_0||_{H^1},||u_0||_{H^1})<\infty,\quad \forall t\in[0,T].
\end{align}

An iteration argument yields the $H^s\,\, (s\geq 2, s\in \mathbb{N})$ estimates. To set up the iteration, we make the following assumption 
\begin{align}\label{Assumption}
|| n(t)||_{H^{s-1}}+|| u(t)||_{H^{s-1}}\leq&C_{H^{s-1}}(
C_{L\log L},C_{u;L^2},||n_0||_{L^1\cap L
^\infty}, ||n_0||_{H^{s-1}},||u_0||_{H^{s-1}})<\infty,\quad \forall t\in[0,T],
\end{align}
and prove that \begin{align}\label{Iteration_conclusion}
|| n(t)||_{H^{s}}+|| u(t)||_{H^{s}}\leq&C_{H^{s}}(
C_{L\log L},C_{u;L^2},||n_0||_{L^1\cap L
^\infty}, ||n_0||_{H^{s}},||u_0||_{H^{s}})<\infty,\quad \forall t\in[0,T].
\end{align} Since we have already obtained the $H^1$ bound of the solution $(n,u)$, by iterating this argument, one can propagate any $H^s$-Sobolev norm as long as the conditions \eqref{S+n_condition} and \eqref{Energy_u_condition} are satisfied. 

We focus on the estimate of the density $n$ first. Applying the density equation \eqref{pePKS-NS}, the time evolution of the $\dot H^s$ semi-norm of $n$ can be expressed using integration by parts as follows
\begin{equation}
    \frac{1}{2}\frac{d}{dt} \sum_{|\al|=s}||\pa_x^\al n||^2_{2}+\sum_{|\al|=s}||\na \pa_x^\al n||^2_{2}=-\sum_{|\al|=s}\int  \pa_x^\al n \pa_x^\al( u \cdot \nabla n)dx-\sum_{|\al|=s}\int \pa_x^\al n \pa_x^\al \nabla \cdot (\nabla c n)dx=: \mathcal{I}_n+\mathcal{II}_n.\label{In_IIn}
\end{equation}
Now we estimate the first term $\mathcal{I}_n$ in \eqref{In_IIn}. We further decompose it into two parts: 
\begin{align}\label{In123}
\mathcal{I}_n=\sum_{|\al|=s}\int (\pa_x^\al n  ) u\cdot\nabla(\pa_x^\al n) dx+ \sum_{|\al|=s}\sum_{(0,0)<\beta\leq\al}\left(\begin{array}{cc}\beta_1\\\al_1\end{array}\right) \left(\begin{array}{cc}\beta_2\\\al_2\end{array}\right) \int \pa_x^\al n (\pa_x^\beta u) \cdot \na ( \pa_x^{\al-\beta}n )dx
=: \mathcal{I}_{n;1}+\mathcal{I}_{n;2}
.
\end{align}
The divergence-free property of the vector field $u$ and  integration by parts yield the vanishing of the first term $\mathcal{I}_{n;1}$ in \eqref{In123}, i.e., 
\begin{align}    
\mathcal{I}_{n;1}=\sum_{|\al|=s}\int u\cdot \nabla \left(\frac{|\pa_x^\al n|^2}{2}\right)dx=-\sum_{|\al|=s}\int( \nabla\cdot u) \left(\frac{|\pa_x^\al n|^2}{2}\right)dx=0.\label{I_n_1}
\end{align}
To estimate the second term $\mathcal{I}_{n;{2}}$ in \eqref{In123}, we first apply the H\"older inequality to obtain that
\begin{align}
\mathcal{I}_{n;2} \leq &\sum_{\substack{(0,0)<\beta\leq\al,\\|\al|=s}}\left(\begin{array}{rr}\beta_1\\{\al_1}\end{array}\right) \left(\begin{array}{rr}\beta_2\\{\al_2}\end{array}\right) \int \pa_x^\al n   \na(\pa_x^{\al-\beta} n)\pa_x^{\beta} u dx\\
 \leq& \sum_{\substack{(0,0)<\beta\leq\al,\\|\al|=s}}\left(\begin{array}{rr}\beta_1\\{\al_1}\end{array}\right) \left(\begin{array}{rr}\beta_2\\{\al_2}\end{array}\right)  ||n||_{\Dot{H}^s} ||\na \pa_x^{\al-\beta} n||_{L^p} ||\pa_x^{\beta}u||_{L^q} ,\quad  \frac{1}{p}+\frac{1}{q}=\frac{1}{2}.
\end{align}
Applying the Gagliardo-Nirenberg-Sobolev inequalities yields the following bounds
\begin{align}
||\na\pa_x^{\al-\beta} n||_{L^p}\leq& C_{GNS} ||n||_{\Dot{H}^{s+1}}^{\theta_1}||n||_{\Dot{H}^1}^{1-\theta_1},\quad \theta_1=\frac{|\al|-|\beta|+1-\frac{2}{p}}{s};\\
||\pa_x^{\beta} u||_{L^q}\leq &C_{GNS}||u||_{\Dot{H}^{s+1}}^{\theta_2}||u||_{\Dot{H}^1}^{1-\theta_2},\quad\theta_2=\frac{|\beta|-\frac{2}{q}}{s}=1-\theta_1. 
\end{align}
Combining these two estimates, the $H^1$ estimate \eqref{H_1_bound} with the previous estimation, and applying the Young's inequality yield that
\begin{align}
\mathcal{I}_{n;2} \leq &C_{GNS}||n||_{\dot H^{s}}\left(||n||_{\dot H^{s+1}}+||u||_{\dot H^{s+1}}\right)C_{H^1}.
\end{align} 
Combining this inequality and the $\mathcal{I}_{n;1}$ estimate \eqref{I_n_1} and the decomposition \eqref{In123} yields the estimate
\begin{align}
\mathcal{I}_n\leq &C||n||_{\dot H^{s}}\left(||n||_{\dot H^{s+1}}+||u||_{\dot H^{s+1}}\right)C_{H^1}\\
\leq&\frac{1}{8}||n||_{\dot H^{s+1}}^2+\frac{1}{8}||u||_{\dot H^{s+1}}^2+C(C_{H^{s-1}})||n||_{\dot H^s}^2.\label{I_n}
\end{align}
This completes the estimation of the $\mathcal{I}_n$ in \eqref{In_IIn}. 
Next we estimate the integral $\mathcal{II}_n$ in \eqref{In_IIn} as follows:  
\begin{equation}
\mathcal{II}_n=\sum_{|\al|=s}\int \na(\pa_x^\al n)\cdot \pa_x^{\al}( n \nabla c)dx\leq C ||n||_{\dot H^{s+1}}||n \na c||_{\dot H^s}.
\end{equation}
Now by the product estimate for Sobolev functions, the chemical gradient estimate \eqref{na_c_L_infty}, the $L^p$ bound on the cell density \eqref{C2infty}, the assumption \eqref{Assumption} and the $L^2$-boundedness of the Riesz transform, we have that 
\begin{align}
\mathcal{II}_n\leq& C||n||_{\dot H^{s+1}}(||n||_{H^s}||\na c||_{L^\infty}+||\na c||_{H^s}||n||_{L^\infty})\\
\leq&\frac{1}{8}||n||_{\dot H^{s+1}}^2+C(\CC)||n||_{\dot H^s}^2+C(C_{H^{s-1}}, \CC).\label{II_n}
\end{align}
Combining the $\mathcal{I}_n$ estimate \eqref{I_n}, the $\mathcal{II}_n$ estimate \eqref{II_n} and the equation \eqref{In_IIn}, we obtain that there exists a constant $C$ depending on the ${H^{s-1}}$ norm of the solution $(n,u)$ \eqref{Assumption} and the $L^p$ estimate of $n$ \eqref{C2infty} such that the following inequality holds:
\begin{equation}\label{I_n+II_n_Conclusion}
    \frac{1}{2}\frac{d}{dt} \sum_{|\al|=s}||\pa_x^\al n||^2_{2}+\frac{1}{2}\sum_{|\al|=s}||\na \pa_x^\al n||^2_{2}\leq \frac{1}{8}||u||_{\dot H^{s+1}}^2+
    C(C_{H^{s-1}},\CC)||n||_{\dot H^{s}}^2 +C(C_{H^{s-1}}, \CC).
\end{equation}

Next we focus on the $H^s$ estimate of $u$. Direct calculation with the velocity equation \eqref{eqn:flow} yields that
\begin{align}
    \frac{1}{2}\frac{d}{dt}\sum_{|\al|=s}||\pa_x^\al u||^2_{2}
+\sum_{|\al|=s}||\na \pa_x^\al u||^2_{2}=-\sum_{|\al|=s}\int \pa_x^\al u \cdot \pa_x^\al B(u, u)dx+ \int \pa_x^\al u \cdot\mathbb{P}\pa_x^\al(n\nabla c)dx=:\mathcal{I}_u+\mathcal{II}_u.\label{Iu_IIu}
\end{align}
Now we estimate each term in the decomposition \eqref{Iu_IIu}. For the $\mathcal{I}_u$ term, we decompose it into three terms as follows
\begin{align}
    \mathcal{I}_u=&\sum_{|\al|=s}\int (\pa_x^\al u)\cdot ((u\cdot \nabla ) \pa_x^\al u)dx+\sum_{|\al|=s}\sum_{\substack{\beta< \al\\ |\beta|\geq 1}}\left(\begin{array}{rr}\beta_1\\ \al_1\end{array}\right)\left(\begin{array}{rr}\beta_2\\ \al_2\end{array}\right)\int (\pa_x^\al u) \cdot((\pa_x^\beta u\cdot \na )\pa_x^{\al-\beta}u)dx\nonumber\\
    &+\sum_{|\al|=s}\int \pa_x^\al u \cdot((\pa_x^\al u \cdot \nabla )u)dx\nonumber\\
    =:&\mathcal{I}_{u;1}+\mathcal{I}_{u;2}+\mathcal{I}_{u;3}.\label{Iu123}
\end{align}
Now we estimate each term in the decomposition \eqref{Iu123}.
For the first term in \eqref{Iu123}, we apply the divergence-free property of the vector field  $u$ to obtain
\begin{equation}
    \mathcal{I}_{u;1}=\sum_{|\al|=s}\int u\cdot \nabla\left(\frac{|\pa_x^\al u|^2}{2}\right)dx=0.
\end{equation}
For the second term in \eqref{Iu123}, direct application of the H\"older inequality yields that
\begin{align*}
    \mathcal{I}_{u;2}\leq&\sum_{|\al|=s}\sum_{\substack{\beta< \al,\\|\beta|\geq 1}}\left(\begin{array}{rr}\beta_1\\\al_1\end{array}\right)\left(\begin{array}{rr}\beta_2\\\al_2\end{array}\right)\int |\pa_x^\al u| |\pa_x^\beta u| |\na \pa_x^{\al-\beta}u|dx \\
    \leq& C\sum_{|\al|=s}\sum_{\substack{\beta< \al,\\|\beta|\geq 1}}||u||_{\Dot{H}^{s}}||\pa_x^\beta u||_{{p}}||\na \pa_x^{\al-\beta}u||_{{q}},\quad \frac{1}{{p}}+\frac{1}{{q}}=\frac{1}{2}.
\end{align*}
Now we recall the following Gagliardo-Nirenberg-Sobolev inequalities 
\begin{align}
||\pa_x^\beta u||_{L^{p}}\leq& C_{GNS} ||u||_{\Dot{H}^{s+1}}^{\theta_3}||u||_{\Dot{H}^1}^{1-\theta_3},\quad \theta_3=\frac{|\beta|-\frac{2}{{{p}}}}{s};\\
||\na \pa_x^{\al-\beta} u||_{L^{q}}\leq &C_{GNS}||u||_{\Dot{H}^{s+1}}^{\theta_4}||u||_{\Dot{H}^1}^{1-\theta_4},\quad \theta_4=\frac{(|\al|-|\beta|+1)-\frac{2}{{{q}}}}{s}=1-\theta_3.
\end{align} 
Combining these inequalities and the estimation above yields that 
\begin{align}
 \mathcal{I}_{u;2}\leq C_{GNS}||u||_{\Dot{H}^{s+1}}||u||_{\Dot{H}^{s}}||u||_{\Dot{H}^{1}}.
\end{align}
Now we estimate the last term $\mathcal{I}_{u;3}$ in the decomposition \eqref{Iu123} using the H\"older inequality and the Gagliardo-Nirenberg-Sobolev inequality as follows\begin{equation}
    \mathcal{I}_{u;3}\leq C\sum_{|\al|=s}||u||_{\Dot{H}^s}||\pa_x^\al u||_{L^4} ||\na u||_{L^4} \leq C_{GNS}||u||_{\Dot{H}^1}||u||_{\Dot{H}^{s+1}}||u||_{\Dot{H}^{s}}.
\end{equation}
Combining the estimations of the $\mathcal{I}_{u;1},$ $\mathcal{I}_{u;2}$ and $\mathcal{I}_{u;3}$ terms above and the decomposition \eqref{Iu123}, and applying the Young's inequality yield the following
\begin{align}
    \mathcal{I}_u
    \leq&\frac{1}{8}||u||_{\dot H^{s+1}}^2+C(C_{H^{s-1}},C_{1,\infty})||u||_{\dot H^{s}}^2+C(C_{H^{s-1}},\CC).\label{I_u}
\end{align} 
Now we estimate the term $\mathcal{II}_u$ in \eqref{Iu_IIu} with the product estimate for Sobolev functions, the chemical gradient estimate \eqref{na_c_L_infty}, the $L^p$ bound on the cell density $n$ \eqref{C2infty}, the iteration  assumption  \eqref{Assumption}, the divergence free property of the vector field $u$, the fact that projection $\mathbb{P}$ is self-adjoint, and the $L^2$-boundedness of the Riesz transform as follows
\begin{align}
    &\mathcal{II}_u\leq || u||_{\dot H^{s}} || n\nabla c||_{\dot{H}^s}\leq C
    || u||_{\dot H^{s}} (||n||_{H^s}||\na c||_\infty+|| n||_\infty||\nabla c||_{{H}^s})\\ &\leq||  u||_{\dot H^s}^2+ C ||n||_{ H^s}^2||\na c||_{\infty}^2+C||n||_\infty^2||\na c||_{H^s}^2 
    \leq ||  u||_{\dot H^s}^2+ C(\CC) ||n||_{\dot H^s}^2+C(C_{H^{s-1}},\CC).\label{II_u}
\end{align}
Combining the estimates for $\mathcal{I}_u$ \eqref{I_u} and $\mathcal{II}_u$ \eqref{II_u}, and the decomposition 
 \eqref{Iu_IIu}, we end up with the estimate on the time evolution of the $\dot H^s$ seminorm of vector field $u$ 
\begin{align}\label{Iu+IIu_conclusion}
 \frac{1}{2}\frac{d}{dt}\sum_{|\al|=s}||\pa_x^\al u||^2_{2}
+\frac{1}{2}\sum_{|\al|=s}||\na \pa_x^\al u||^2_{2}\leq C(C_{H^{s-1}},\CC)(||u||_{\dot H^s}^2+||n||_{\dot H^s}^2)+C(C_{H^{s-1}},\CC).
\end{align}
Finally, combining the estimates \eqref{I_n+II_n_Conclusion} and \eqref{Iu+IIu_conclusion}, we have that 
\begin{align}
    \frac{1}{2}&\frac{d}{dt}\left(||u||^2_{\Dot{H}^s}+||n||^2_{\Dot{H}^s}\right) \\
    \leq&-\frac{1}{4}||n||^2_{\Dot{H}^{s+1}}-\frac{1}{4}||u||^2_{\Dot{H}^{s+1}}+C(C_{H^{s-1}},\CC)(||n||_{\Dot {H}^{s}}^2+||u||_{\Dot {H}^{s}}^2)+ C(C_{H^{s-1}},\CC).\label{Hs_conclusion_0}
\end{align}
Applying the Gagliardo-Nirenberg-Sobolev inequality, we end up with the following
\begin{align}
    -||f||_{\Dot{H}^{s+1}}^2\leq -\frac{||f||_{\Dot{H}^s}^{2+\frac{2}{s}}}{C_{GNS}||f||_{L^2}^{\frac{2}{s}}}.
\end{align}
Applying this upper bound on the dissipative terms appeared in \eqref{Hs_conclusion_0} and recalling the $L^p$ estimate \eqref{C2infty} and the $L^2$ energy condition of the vector fields $u$ \eqref{Energy_u_condition}, we obtain that
\begin{align}
    \frac{1}{2}\frac{d}{dt}(||u||^2_{\Dot{H}^s}+||n||^2_{\Dot{H}^s}) \leq&-\frac{||n||_{\Dot{H}^{s}}^{2+\frac{2}{s}}}{4C_{GNS} \CC^{\frac{2}{s}}}-\frac{||u||_{\Dot{H}^{s}}^{2+\frac{2}{s}}}{4C_{GNS}C_{u;L^2}^{\frac{2}{s}}}+C(C_{H^{s-1}},\CC)(||n||_{\Dot {H}^{s}}^2+||u||_{\Dot {H}^{s}}^2)+ C(C_{H^{s-1}},\CC).
\end{align}
Therefore we have that
\begin{align*}
||n(t)||_{H^{s}}+||u(t)||_{H^s}\leq C_{H^s}(||n_0||_{H^s},||u_0||_{H^s}, C_{H^{s-1}},\CC,C_{u;L^2})<\infty, \quad \forall t \in [0,T]. 
\end{align*}
This concludes the proof. 
\end{proof}

Next we prove Theorem \ref{thm:Second_moment_criterion}.
\begin{proof}[Proof of Theorem  \ref{thm:Second_moment_criterion}]
The proof involves two steps. First we estimate the entropy
\begin{align}
S[n]=\int_{\rr^2}n\log n dx.
\end{align} 
Then we estimate its negative part $S^-[n]$ through second moment bound. Since $S^+[n]=S[n]+S^-[n]$, these estimates yield the bound on the positive part of the entropy $S^+[n]$.

To estimate the entropy, we combine the decay estimate of the free energy \eqref{Free_energy_PKSNS} and the following logarithmic Hardy-Littlewood-Sobolev inequality (see e.g. \cite{CarlenLoss92}):
\begin{thm}[Logarithmic Hardy-Littlewood-Sobolev Inequality] For all nonnegative functions $f \in L^1(\rr^2)$ such that $f\log f$ and $f\log(1+|x|^2)$ belong to $L^1(\rr^2)$, there exists a constant $C(M)$ such that the following inequality holds
\begin{align}\label{log HLS}
\int_{\rr^2}f\log f dx+\frac{2}{M}\iint_{\rr^2\times\rr^2} f(x)f(y)\log|x-y|dxdy\geq -C(M),\quad M =\int_{\rr^2}fdx>0.
\end{align}
\end{thm}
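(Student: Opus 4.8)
The plan is to reduce the inequality to a scale‑invariant statement on the round sphere $S^{2}$, and there to recognize it as the Legendre dual of the sharp Onofri inequality. First, scaling lets us take $M=1$: writing $g=f/M$ one computes
\[
\int_{\rr^{2}}f\log f\,dx+\frac{2}{M}\iint f(x)f(y)\log|x-y|\,dx\,dy=M\log M+M\,J[g],\qquad J[g]:=\int_{\rr^{2}}g\log g\,dx+2\iint g(x)g(y)\log|x-y|\,dx\,dy,
\]
so it suffices to bound $J$ from below over probability densities $g\ge 0$ with $g\log g,\ g\log(1+|x|^{2})\in L^{1}(\rr^{2})$, the constant $C(M)$ then following at once. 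Note that $J$ is invariant under the mass‑preserving dilations $g(x)\mapsto\lambda^{2}g(\lambda x)$, since the two terms pick up $\pm 2\log\lambda$; this scale invariance is the algebraic fingerprint of the criticality of the coefficient $2/M$, and any larger coefficient would destroy the lower bound.

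Second, I would transfer the problem to $S^{2}$ by stereographic projection, which removes all difficulty at infinity. Let $\rho$ be the density on $S^{2}$ determined by $\rho\,dV=g\,dx$, where $dV=\tfrac{4}{(1+|x|^{2})^{2}}\,dx$ is the push‑forward of the standard surface measure. Conformality of stereographic projection gives the chordal‑distance identity $|P-Q|^{2}=\tfrac{4|x-y|^{2}}{(1+|x|^{2})(1+|y|^{2})}$, so in $2\iint_{S^{2}\times S^{2}}\rho(P)\rho(Q)\log|P-Q|\,dV\,dV$ the weight contributions $-\log(1+|x|^{2})$ cancel exactly the term $2\int g\log(1+|x|^{2})\,dx$ issuing from $\int_{S^{2}}\rho\log\rho\,dV=\int g\log g\,dx+2\int g\log(1+|x|^{2})\,dx-\log 4$; the remaining additive constants also cancel. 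The bookkeeping yields the clean identity $J[g]=\int_{S^{2}}\rho\log\rho\,dV+2\iint_{S^{2}\times S^{2}}\rho(P)\rho(Q)\log|P-Q|\,dV\,dV$, so it is enough to bound the right‑hand side below over probability densities $\rho$ on $(S^{2},dV)$.

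Third, I would exploit the quadratic structure on $S^{2}$. Write $\log|P-Q|=-2\pi G(P,Q)+\mathrm{const}$ with $G$ the Green's function of $-\Delta_{S^{2}}$ (so $-\Delta_{S^{2}}G(P,\cdot)=\delta_{P}-\tfrac{1}{4\pi}$, $\int G(P,\cdot)\,dV=0$), and put $\Psi:=G\ast\rho$, so that $-\Delta_{S^{2}}\Psi=\rho-\tfrac{1}{4\pi}$, $\int\Psi\,dV=0$, and $2\iint\rho\rho\log|P-Q|=-4\pi\int_{S^{2}}|\nabla_{S^{2}}\Psi|^{2}\,dV+\mathrm{const}$. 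Completing the square, $-4\pi\int|\nabla_{S^{2}}\Psi|^{2}\,dV=\inf_{\phi}\bigl(4\pi\int|\nabla_{S^{2}}\phi|^{2}\,dV-8\pi\int\phi\rho\,dV\bigr)$ over mean‑zero $\phi\in H^{1}(S^{2})$, attained at $\phi=\Psi$. Inserting this, interchanging $\inf_{\rho}$ with $\inf_{\phi}$, and evaluating the inner minimization by the Gibbs principle $\inf_{\rho\ge 0,\ \int\rho\,dV=1}\bigl(\int\rho\log\rho\,dV-\int\rho w\,dV\bigr)=-\log\int_{S^{2}}e^{w}\,dV$, the substitution $u=8\pi\phi$ identifies
\[
\inf_{\rho}\Bigl(\int_{S^{2}}\rho\log\rho\,dV+2\iint\rho\rho\log|P-Q|\Bigr)=\inf_{\substack{u\in H^{1}(S^{2})\\ \int u\,dV=0}}\Bigl(\frac{1}{16\pi}\int_{S^{2}}|\nabla_{S^{2}}u|^{2}\,dV-\log\int_{S^{2}}e^{u}\,dV\Bigr)+\mathrm{const},
\]
which is a constant plus the Onofri functional. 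The sharp Onofri--Moser--Trudinger inequality on $S^{2}$ bounds the latter below, so $\inf_{\rho}J$ is finite; unwinding the reductions gives an explicit admissible $C(M)$, and tracing the equality case back through the stereographic map shows the extremals are the conformal factors on $S^{2}$, i.e. the Liouville profiles on $\rr^{2}$ — which is precisely where the value $8\pi$, equivalently the sharpness of the coefficient $2/M$, is pinned down.

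The one substantial input, and the hard part, is the sharp Onofri inequality on $S^{2}$. It is classical (Onofri; Moser; Beckner; Carlen--Loss \cite{CarlenLoss92}) but genuinely non‑trivial, because its extremal set is non‑compact — the orbit of $0$ under the conformal group of $S^{2}$ — so the direct method does not apply verbatim and one needs a symmetrization step together with a conformal recentering, i.e. the competing‑symmetries argument. One can in fact bypass the Onofri detour and run competing symmetries directly on $\int_{S^{2}}\rho\log\rho\,dV+2\iint\rho\rho\log|P-Q|\,dV\,dV$: alternate a symmetric‑decreasing rearrangement, which lowers the functional, with a conformal map normalized to keep the center of mass fixed, which is non‑increasing, and show the iterates converge to the unique rotationally symmetric minimizer, obtaining the inequality and the sharp constant simultaneously. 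The rest is routine bookkeeping best deferred to an appendix: carry out the duality and Gibbs steps first for smooth bounded $\rho$ and pass to the general case by approximation and lower semicontinuity of $J$; check $\Psi=G\ast\rho\in H^{1}(S^{2})$ under the stated hypotheses via the elementary bound $ab\le a\log a-a+e^{b}$ applied to the logarithmic kernel; and justify the change of variables between $\rr^{2}$ and $S^{2}$ at the point at infinity — legitimate precisely because $g\log g$ and $g\log(1+|x|^{2})$ are integrable, which is also what keeps every integral above finite.
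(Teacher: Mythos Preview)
The paper does not prove this statement at all: it is quoted as a known result with a reference to Carlen--Loss \cite{CarlenLoss92}, and is then used as a black box in the proof of Theorem~\ref{thm:Second_moment_criterion}. So there is no ``paper's own proof'' to compare against.

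Your proposal is a correct and rather complete sketch of precisely the Carlen--Loss argument (and the closely related Beckner route): normalize to $M=1$, pass to $S^{2}$ by stereographic projection so that the entropy and the logarithmic interaction combine into the conformally natural functional, express the interaction via the Green's function and Legendre-dualize to land on the Onofri functional, and invoke the sharp Onofri/Moser--Trudinger inequality on $S^{2}$, whose proof in turn rests on competing symmetries. The bookkeeping you indicate (the cancellation of the $\log(1+|x|^{2})$ terms, the $-2\pi G$ normalization, the $\inf_\rho\inf_\phi$ interchange, the Gibbs variational identity, and the substitution $u=8\pi\phi$ leading to the $\tfrac{1}{16\pi}$ coefficient) all checks out. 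The only genuinely hard ingredient, as you say, is Onofri's inequality itself; everything else is calculus. Since the paper merely cites the result, your write-up goes well beyond what is needed here, but it is sound.
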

Combining \eqref{log HLS} and Lemma \ref{Lem:Free_Energy_Decay} yields that
\begin{align*}
E[n_0,u_0]\geq& E[n,u]\\
=&\left(1-\frac{M}{8\pi}\right)\int_{\rr^2} n\log n dx+\frac{M}{8\pi}\left(\int_{\rr^2}n\log n dx+\frac{2}{M}\iint_{\rr^2\times \rr^2} n(x)\log|x-y| n(y)dxdy\right)+\frac{||u||_2^2}{2}\\
\geq &\left(1-\frac{M}{8\pi}\right)S[n]-\frac{M}{8\pi}C(M)+\frac{||u||_2^2}{2}.
\end{align*} 
As a result, we obtain an a-priori bound on the entropy $S[n]$ and the $L^2$ norm of the velocity $||u||_2$ for any finite time
\begin{align}\label{CT_definition}
\frac{||u(t)||_2^2}{2(1-\frac{M}{8\pi})}+S[n(t)]\leq \frac{E[n_0,u_0]+\frac{M}{8\pi} C(M)}{1-\frac{M}{8\pi}}\leq C(M, E[n_0,u_0])<\infty,\quad \forall t\in[0,T].
\end{align}
Therefore, we obtain the bound on the entropy $S[n]$ and the energy $||u||_2^2$. 

Next we estimate the negative part of the entropy $S^-[n]$. To this end, we recall the following inequality 
\begin{align}\label{S-n_control}
\int_{\rr^2} g\log^- gdx\leq \frac{1}{2}\int_{\rr^2} g |x|^2dx+\log(2\pi)\int_{\rr^2} g dx +\frac{1}{e}, \quad g\geq 0, 
\end{align} 
whose proof can be found in Lemma 2.2, \cite{BlanchetCarrilloMasmoudi08}. Since the second moment is assumed to be bounded \eqref{Second_Moment_Bound}, direct application of the inequality yields the following estimate:
\begin{align}\label{CLlogL_defn}
||u(t)||_2+\int_{\rr^2} n(t,x)\log ^+ n(t,x) dx\leq C(C_V, E[n_0, u_0], M)<\infty,
\end{align}
on the interval $[0,T]$. Now all the conditions in Theorem \ref{thm:S+n_condition} are checked, and this concludes the proof of Theorem \ref{thm:Second_moment_criterion}. 
\end{proof}

\begin{proof}[Proof of Corollary \ref{cor:radially_symmetric_case}] It is enough to show that if the initial data $(n_0(x), u_0(x))$ is radially symmetric,  then the second moment is bounded for any finite time, i.e.,
\begin{align}\label{proof_cor}
\int n(t,x)|x|^2 dx\leq \int n_0(x)|x|^2dx+4M t.
\end{align} Explicit calculation of the time evolution of the second moment yields that
\begin{align}\label{proof_cor_1}
\frac{d}{dt}\int n(t,x)|x|^2dx=4M-\frac{1}{2\pi}M^2-\int x^2 \na \cdot(un)dx.
\end{align}
To estimate the last term in the above equality, we will rewrite it in a different form. To this end, we introduce the stream function of the velocity field $u$, 
\[\phi:= 
{ \de^{-1}\mathrm{curl} u},\quad (-\pa_{x_2},\pa_{x_1})\phi=u.
\] Since the equation \eqref{pePKS-NS} preserves radial symmetry, the solutions $(n,u)$ are radially symmetric. As a result, the stream functions $\phi$ are also radially symmetric, which implies $(x_1\pa_{x_2}-x_2\pa_{x_1})\phi\equiv0$. Applying these facts, we rewrite the last term in the time evolution of the second moment in the following manner,
\begin{align}
\int |x|^2 \na \cdot(un)dx=-2\int x\cdot u n dx=-2\int x\cdot \na^{\perp}\phi ndx=2\int (x_1\pa_{x_2}-x_2\pa_{x_1})\phi ndx=0.
\end{align}
Combining this and \eqref{proof_cor_1} yields \eqref{proof_cor}. Since the second moment condition \eqref{Second_Moment_Bound} is checked, Theorem \ref{thm:Second_moment_criterion} can be applied. This completes the proof of the first part of Corollary \ref{cor:radially_symmetric_case}.

{If the total mass is greater than $8\pi$, then by the same argument as above, we observe that
\begin{align}
\frac{d}{dt}\int n(t,x)|x|^2dx=4M-\frac{1}{2\pi}M^2<0.
\end{align}
Hence if the solution $(n,u)$ is regular on the time interval $\dss [0,T_\star],\,T_\star:=\frac{8\pi}{4M(M-8\pi)}\int n_0|x|^2dx$, then the second moment becomes zero at time $T_\star$, which is impossible. Hence the solution must blow up on or before time $T_\star$. This concludes the proof of the second part of Corollary \ref{cor:radially_symmetric_case}. } 
\end{proof}

Now we introduce the modified free energy $\cF_{\Gamma}$ and its properties. We introduce  the following modified free energy:
\begin{align}
\cF_{\Gamma}[n,u]=\int n\Gamma(n)-\frac{nc}{2}+\frac
{|u|^2}{2}dx,\label{def:En0}
\end{align}
where $\Gamma $ is defined as
\begin{align}
\Gamma(n)=\left\{\begin{array}{rr}\log n , \quad n\geq \eta;\\
\log \eta+\eta^{-1}\left(n-\eta\right)-\frac{\eta^{-2}}{2}\left(n-\eta\right)^2,\quad n<\eta.\end{array}\right.\quad
\eta:=\eta(\delta,M)= \min\left\{1,\frac{\delta}{M}\right\}.\label{Gamma}
\end{align}
The $\Gamma$ function is chosen such that it matches $\log$ when $n$ is large but is bounded from below when $n$ is small. Here, we have replaced the function $\log (\eta+(n-\eta))$ by its degree two Taylor expansion centred at $\eta$ when $n<\eta$ and use the original $\log$ function when $n\geq \eta $.

The next lemma states that the modified free energy \eqref{def:En0} grows at most linearly under the dynamics \eqref{pePKS-NS}. 
\begin{lem}\label{Lemma:free energy evolution} The time derivative of the modified free energy $\cF_{\Gamma}[n,u]$, defined in \eqref{def:En0}, satisfies the following estimate:
\begin{equation}\label{free energy evolution}
\frac{d}{dt}\cF_{\Gamma}[n(t),u(t)] \leq \delta ,\quad\forall t\in [0,\infty).
\end{equation}
Furthermore, the following quantity is bounded:
\begin{align}\label{Gamma-control}
-\int_{n<1}n\Gamma(n) dx \leq \left(-\log\eta(\delta,M)+\frac{3}{2}\right)M.
\end{align}
\end{lem}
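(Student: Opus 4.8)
The first estimate is the substantial one; for it, the plan is to peel off from $\cF_\Gamma$ the original free energy, whose dissipation is already known. Writing $\cF_\Gamma[n,u]=E[n,u]+\int_{\rr^2}\rho(n)\,dx$ with $\rho(n):=n\big(\Gamma(n)-\log n\big)$ --- a function supported in $\{n<\eta\}$, since $\Gamma(n)=\log n$ for $n\geq\eta$ --- Lemma~\ref{Lem:Free_Energy_Decay} gives $\frac{d}{dt}E=-D-\int_{\rr^2}|\na u|^2\,dx$, where $D:=\int_{\rr^2}n|\na\log n-\na c|^2\,dx\geq0$, so it remains to prove $\frac{d}{dt}\int_{\rr^2}\rho(n)\,dx\leq D+\delta$. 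A constraint that shapes the whole argument is that on the plane $\int_{\rr^2}|\na c|^2\,dx=+\infty$ in general, so every $\na c$-term must be processed through its ``mass form'' $\int f(n)\,\na n\cdot\na c\,dx=\int F(n)\,n\,dx$ (valid whenever $F'=f$, $F(0)=0$, because $-\de c=n$ and the boundary contributions at infinity vanish by the decay of $n$); one may never complete a square against $|\na c|^2$.

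First I would compute $\frac{d}{dt}\int_{\rr^2}\rho(n)\,dx$ exactly as the entropy term is handled in Lemma~\ref{Lem:Free_Energy_Decay}: the transport term vanishes by $\na\cdot u=0$, and integration by parts leaves $-\int\rho''(n)|\na n|^2\,dx+\int\rho''(n)\,n\,\na n\cdot\na c\,dx$. With $g:=\big(n\Gamma(n)\big)'=\Gamma(n)+n\Gamma'(n)$ one finds $\rho''(n)=g'(n)-\tfrac1n$, which vanishes for $n\geq\eta$ and equals $-\tfrac{3(n-\eta/3)(n-\eta)}{n\,\eta^{2}}$ for $n<\eta$; hence $\rho''\geq0$ on $\{\eta/3\leq n<\eta\}$, while on $\{n<\eta/3\}$ one has $0<g'(n)<\tfrac1n$, so $\rho''<0$ and $|\rho''(n)|<\tfrac1n$ there. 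The cross term I would put in mass form as $\int\Theta(n)\,n\,dx$, with $\Theta'=n\,\rho''$, $\Theta(0)=0$; an explicit primitive gives $\Theta(n)=-\tfrac{n(\eta-n)^{2}}{\eta^{2}}\leq0$ for $n<\eta$ (and $\Theta\equiv0$ otherwise), so this term is nonpositive and may be discarded, as may the $\rho''\geq0$ part of the quadratic term. What survives is $\frac{d}{dt}\int\rho(n)\,dx\leq\int_{\{n<\eta/3\}}|\rho''(n)|\,|\na n|^2\,dx\leq\int_{\{n<\eta/3\}}\tfrac{|\na n|^2}{n}\,dx$.

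The decisive step is to absorb $\int_{\{n<\eta/3\}}\tfrac{|\na n|^2}{n}\,dx$ into $D$. Bounding $D$ below by its restriction to $\{n<\eta/3\}$ and expanding the square pointwise, $-D+\int_{\{n<\eta/3\}}\tfrac{|\na n|^2}{n}\,dx\leq\int_{\{n<\eta/3\}}\big(2\,\na n\cdot\na c-n|\na c|^2\big)\,dx\leq2\int_{\{n<\eta/3\}}\na n\cdot\na c\,dx$, and since $\mathbf 1_{\{n<\eta/3\}}\na n=\na\big(\min\{n,\eta/3\}\big)$ this last integral equals $2\int_{\rr^2}\min\{n,\eta/3\}\,n\,dx\leq\tfrac{2\eta}{3}\int_{\rr^2}n\,dx=\tfrac23\eta M\leq\tfrac23\delta$, because $\eta=\min\{1,\delta/M\}\leq\delta/M$. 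Collecting, $\frac{d}{dt}\cF_\Gamma\leq-\int_{\rr^2}|\na u|^2\,dx+\tfrac23\delta\leq\delta$. I expect this absorption to be the genuine obstacle: it is what forces the mass-form bookkeeping, and it goes through only because on the bad set $\{n<\eta/3\}$ the surplus weight $|\rho''(n)|$ introduced by truncating the logarithm stays below the classical entropy-dissipation weight $1/n$ (a consequence of $n\Gamma(n)$ being convex) and the leftover correction $\int\Theta(n)\,n\,dx$ happens to be nonpositive --- for the untruncated $\log$ the term $\rho$ is absent altogether, and a coarser truncation need not have either property.

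For \eqref{Gamma-control} it suffices to note $\Gamma(n)\geq\log\eta-\tfrac32$ for every $n\in(0,1]$: when $\eta\leq n\leq1$ this is $\log n\geq\log\eta$, and when $0<n<\eta$ the linear term $\eta^{-1}(n-\eta)$ and the quadratic term $-\tfrac{\eta^{-2}}{2}(n-\eta)^{2}$ in \eqref{Gamma} lie in $[-1,0]$ and $[-\tfrac12,0]$ respectively. Since $\log\eta\leq0$, multiplying by $n\geq0$ and integrating over $\{n<1\}$ gives $-\int_{\{n<1\}}n\Gamma(n)\,dx\leq\big(\tfrac32-\log\eta\big)\int_{\{n<1\}}n\,dx\leq\big(\tfrac32-\log\eta\big)M$.
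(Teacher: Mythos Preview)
Your proof is correct and takes a genuinely different route from the paper's.

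The paper computes $\frac{d}{dt}\cF_\Gamma$ directly from scratch, obtaining seven terms $T_1,\dots,T_7$; after the expected cancellations from $\na\cdot u=0$, the work is concentrated in $T_1+T_4$, which is handled by splitting $\{n\geq\eta\}$ versus $\{n<\eta\}$ and completing a square using the ad hoc numerical bound $\sup_{n<\eta}\sqrt{(4\eta^{-1}-3\eta^{-2}n)n}\leq 2/\sqrt{3}$. The single residual term is $\int_{\{n<\eta\}}\na n\cdot\na c\,dx$, converted to mass form and bounded by $\eta M\leq\delta$.

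Your argument is more modular: you peel off the known dissipation of $E$ via Lemma~\ref{Lem:Free_Energy_Decay} and analyze only the correction $\int\rho(n)\,dx$. The explicit factorizations $\rho''(n)=-\tfrac{3(n-\eta/3)(n-\eta)}{n\eta^{2}}$ and $\Theta(n)=-\tfrac{n(\eta-n)^{2}}{\eta^{2}}$ make the sign structure transparent --- the cross term is manifestly nonpositive, and the ``bad'' region is pinned down as $\{n<\eta/3\}$ rather than $\{n<\eta\}$. This yields the slightly sharper constant $\tfrac{2}{3}\delta$. Both arguments ultimately rest on the same mass-form identity $\int_{\{n<a\}}\na n\cdot\na c\,dx=\int\min\{n,a\}\,n\,dx\leq aM$. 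One small trade-off: your decomposition requires $E$ and $\int\rho$ to make sense separately, hence needs $n\log^- n\in L^1$ (equivalently, finite second moment), whereas the paper's direct computation of $\frac{d}{dt}\cF_\Gamma$ does not --- but this is already assumed in the paper's setting, so it is not a genuine restriction here.

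Your treatment of \eqref{Gamma-control} is identical to the paper's.
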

\begin{proof}
Taking the time derivative of $\cF_{\Gamma}[n(t),u(t)]$,  applying the divergence-free condition of the vector field $u$ and integration by parts yield
\begin{align}
\frac{d}{dt}&\left(\int n\Gamma( n)-\frac{nc}{2}+\frac
{|u|^2}{2}dx\right)\nonumber\\
=&\int (n)_t(\Gamma (n)- c)dx+\int n(\Gamma (n))_tdx+\int u\cdot u_tdx\nonumber\\
=&-\int (n\na \log n-\na cn)\cdot(\ga '(n)\na n-\na c)dx-\int u\cdot \na n \ga(n )dx+\int \na \cdot( un) cdx\nonumber\\
&-\int\na(n\ga'(n))\cdot(n\na \log n -\na c n)dx-\int u\cdot \na n n\ga'(n)dx-\int |\na u|^2 dx+\int nu\cdot\na c dx
\nonumber\\
=:&\sum_{i=1}^7T_i\label{Gamma-control-1}.
\end{align}
Applying the integration by parts, we have that the third term $T_3$ and the seventh term $T_7$ in\eqref{Gamma-control-1} cancel each other. Now we consider the second term $T_2$ and the fifth term $T_5$. Since the $\Gamma$ function is finite near the origin, we define the following functions:
 \begin{align*}
 \mathcal{E}(r)=\int_0^r \ga(s)ds,\quad
\mathcal{G}(r) =\int_0^r s\ga'(s)ds.
 \end{align*}
 The second term $T_2$ and fifth term $T_5$ can be explicitly calculated using the divergence free condition $\na \cdot u=0$ and integration by parts as follows:
\begin{align*}
T_2=&-\int u\cdot \na(\mathcal{E})dx=\int (\na\cdot u)\mathcal{E}dx=0;\\
T_5=&-\int u\cdot\na (\mathcal{G})dx=\int(\na\cdot u)\mathcal{G}dx=0.
\end{align*}
Next we estimate the terms $T_1+T_4$
.
Applying the definition of $\Gamma$ \eqref{Gamma},
{the cut-off threshold $\eta=\min\{\frac{\delta}{M^2},1\}$, and the fact that $\Gamma'(n)=2\eta^{-1}-\eta^{-2}n$ for $n\leq \eta$, direct calculation yields the following equality 
\begin{align*}
T_1+T_4
=&-\int_{n\geq \eta}(n\na \log n-\na c n)\cdot\left(\frac{1}{n}\na n-\na c\right)dx \\ 
&  -\int_{n<\eta}(n\na \log n-\na c n)\cdot\left(\left(2\eta^{-1}-\eta^{-2}n\right)\na n-\na c\right)dx\\
& -\int_{n<\eta}\left(2\eta^{-1}-\eta^{-2}n\right)\na n\cdot(n\na \log n-\na c n)dx+ \int_{n<\eta}n\eta^{-2}\na n \cdot(n\na \log n-\na c n)dx.
\end{align*}}
Notice the following inequality:
\begin{align}\sup_{n<\eta}\sqrt{\left(-3\eta^{-2}n+4\eta^{-1}\right)n}\leq \frac{2}{\sqrt{3}}<2,
\end{align}
which implies,
\begin{align}
T_1&+T_4\\=&-\int_{n\geq \eta}n|\na \log n-\na c|^2dx-\int_{n<\eta}\left(4\eta^{-1}-3\eta^{-2}n\right)|\na n|^2dx\\
&+\int_{n<\eta}\sqrt{\left(-3\eta^{-2}n+4\eta^{-1}\right)n}\sqrt{\left(-3\eta^{-2}n+4\eta^{-1}\right)n}\na c \cdot \na ndx-\int_{n<\eta}n|\na c|^2 dx+\int_{n<\eta} \na n\cdot\na c dx \\
\leq &-\int_{n\geq \eta}n|\na\log n-\na c|^2dx -\int_{n<\eta}\left(4\eta^{-1}
-3\eta^{-2}n\right)|\na n|^2dx\\
&+\frac{2}{\sqrt{3}}\int_{n<\eta}\sqrt{\left(-3\eta^{-2}n+4\eta^{-1}\right)n}|\na c||\na n|dx -\int_{n<\eta}n|\na c|^2 dx+\int_{n<\eta} \na n\cdot\na c dx.%
\end{align}
Completing a square using the 2nd, 3rd, 4th terms in the last line yields
\begin{align}
T_1+T_4
\leq &-\int_{n\geq \eta}n|\na \log n-\na c|^2dx-\frac{2}{3}\int_{n<\eta}\left(4{\eta^{-1}}-3\eta^{-2}n\right)|\na n|^2dx\\
&-\int_{n <\eta} \left(\sqrt{4\eta^{-1}-3\eta^{-2}n}\frac{1}{\sqrt{3}}|\na n|-\sqrt{n}|\na c|\right)^2dx +\int_{n<\eta} \na n\cdot\na c dx.\label{3D zero mode T 0 1}
\end{align}
\noindent
\textbf{Claim:} The following estimate holds
\begin{align}
\int_{n<\eta}\na n\cdot \na c dx\leq \delta.\label{Claim}
\end{align}
To prove the claim, we make the qualitative assumption that $n\in C^\infty(\rr^2)\cap H^s(\rr^2),\quad s\geq 3$. However, the final estimate will be independent of the higher regularity norms of the densities $n$ and $c$. We apply the choice of $\eta$ \eqref{Gamma} and integration by parts to obtain
\begin{align}
\int_{n<\eta}\na n\cdot\na c dx=\int \na(\min\{n,\eta\})\cdot \na c dx=-\int \min\{n,\eta\} \de c dx\leq \int \eta n dx\leq \eta M\leq\delta.
\end{align}  Here we have applied the equality $\na n \mathbf{1}_{n< \eta}=\na (\min\{n,\eta\})$ almost everywhere if $n\in W^{1,p}(\rr^2),$ for $1<p<\infty$. This is a natural consequence of Exercise 17 in Evans \cite{Evans} Chapter 5. To explicitly justify the integration by parts, one can use positive $C_c^\infty$ function to approximate the $W^{1,4/3}$ function $\min\{n,\eta\}$ and the $W^{1,4}$ function $\na c$.   

Therefore, combining the claim and estimate \eqref{3D zero mode T 0 1}, we deduce that,
\begin{align}
T_1+T_4\leq& -\int_{n\geq \eta}n|\na\log n-\na c|^2dx-\frac{2}{3}\int_{n<\eta}\left(4\eta^{-1}-3\eta^{-2}n\right)|\na n|^2dx +\delta\leq\delta.\label{3D zero mode T 0}
\end{align}
This finishes the treatment of all $T_i$'s in (\ref{Gamma-control-1}). Therefore, the estimate \eqref{free energy evolution} follows.  

Estimate \eqref{Gamma-control} follows from the fact that the function $\Gamma$ is bounded from below by $\log \eta(\delta,M)-\frac{3}{2}$, which is a finite number. This finishes the proof of Lemma \ref{Lemma:free energy evolution}.
\end{proof}
\begin{proof}[Proof of Theorem \ref{thm:modified_free_energy_result}]
We rewrite the approximate free energy so that the inequality (\ref{log HLS}) can be applied:
\be\ba
 E_{\Gamma}[n_{0},u_0]+\delta t
\geq &\int n\Gamma(n)dx-\int\frac{nc}{2}dx+\int\frac{1}{2}|u|^2dx\\
=&\int n\log^+ndx+\int_{n< 1}n\Gamma(n)dx+\frac{1}{4\pi}\iint \log|x-y|n(x)n(y)dxdy+\frac{1}{2}||u||_2^2\\
=&\left(1-\frac{M}{8\pi}\right)\int n\log^+ndx+\int_{n< 1}n\Gamma(n)dx\\
&+ \frac{M}{8\pi}\left(\int n\log^+ndx+\frac{2}{M}\iint \log|x-y|n(x)n(y)dxdy \right)+\frac{1}{2}||u||_2^2.
\ea\ee
Applying the log-HLS (\ref{log HLS}) and \eqref{Gamma-control} yields:
\be\ba
 E_{\Gamma}[n_{0}, u_0]+\delta t\geq& \left(1-\frac{M}{8\pi}\right)\int n\log^+ndx+\int_{n<1}n\Gamma(n)dx- C(M)\frac{M}{8\pi}+\frac{1}{2}||u||_2^2\\
\geq& \left(1-\frac{M}{8\pi}\right)\int n\log^+ndx-M\log\eta(\delta,M)^{-1}-\frac{3}{2}M-C(M) \frac{M}{8\pi}+\frac{1}{2}||u||_2^2,
\ea\ee
which leads to a bound on the positive part of the entropy $S^+[n(t)]$ and the fluid energy  $||u||_2^2$ for any finite time, i.e.
\begin{align}
\left(1-\frac{M}{8\pi}\right)\int n\log^+ndx+\frac{1}{2}||u||_2^2\leq \cF_\Gamma[n_{0},u_0]+\delta t+M\log\eta({\delta},M)^{-1}+\frac{3}{2}M+C(M)\frac{M}{8\pi}.
\end{align}
This yields that
\begin{align*}
S^+[n(t)]+||u(t)||_2^2< C(E_\Gamma[n_0,u_0],M,\delta)+\delta t<\infty,\quad \forall t\in [0,\infty).
\end{align*}
This concludes the proof of Theorem \ref{thm:modified_free_energy_result}. 
\end{proof}
\begin{proof}[Proof of Theorem \ref{thm:R2}]
Now we highlight the adjustment in the remaining part of the proof of Theorem \ref{thm:R2} comparing to the proof of Theorem \ref{thm:S+n_condition}. The main adjustment takes place in the proof of the $L^2$ norm of the cell density  \eqref{Proof_of_n_L2}. Since the positive part of the entropy $S^+[n(t)]$ is growing linearly with rate $\delta$, the quantity $\eta_K=||(n-K)_+||_1$ will not be uniformly bounded on arbitrarily long interval as in \eqref{small_eta_K}. To overcome this difficulty, we adjust the vertical cut-off level $K$ as time progresses. Specifically speaking, we fix an arbitrary time interval $[0,T]$ and do estimation on it. First note that on this time interval, we have that 
\begin{align}
S^+[n(t)]+||u(t)||_2^2\leq C(E_{\Gamma}[n_0,u_0], M,\delta)+\delta T<\infty, \quad \forall t\in [0,T].
\end{align}
Now we choose the vertical cut-off level $K(T)$ such that the quantity $\eta_{K(T)}:=||(n-K(T))_+||_1$ is small in the sense that
\begin{align}
\eta_{K(T)}\leq \frac{C(E_\Gamma[n_0,u_0], M,\delta)+\delta T}{\log K(T)}\leq \frac{1}{8}C_{GNS},
\end{align}  
where $C_{GNS}$ is the universal constant appeared in the $L^2$ energy estimate \eqref{Proof_of_n_L_2_0}. The resulting $K(T)$ is larger than 
\begin{align}\label{KT}
K(T)\geq \exp\left\{\frac{8C(E_\Gamma[n_0,u_0],M, \delta)}{C_{GNS}}\right\}\exp\left\{\frac{8\delta T}{C_{GNS}}\right\}.
\end{align} Now combining the size of $K(T)$ and a direct $L^2$ energy estimation on the quantity $(n-K(T))_+$, which is the same as \eqref{Proof_of_n_L_2_0}, yields that
\begin{align*}
||n(t)||_{2}\leq &2||\min\{n(t), K(T)\}||_2+2||(n(t)-K(T))_+||_2\\
\leq& 2K(T)^{1/2}M^{1/2}+C(||n_0||_2,M) K(T)^{1/2}\leq C(||n_0||_2, E[n_0,u_0] ,M,\delta)e^{\frac{4\delta}{C_{GNS}} T},\quad \forall t\in[0,T].
\end{align*}
Since the time $T$ is arbitrary, we have that the $L^2$ norm of $n$ can grow at most exponentially with rate $\frac{4\delta}{C_{GNS}}$. Since $\delta$ is arbitrarily small, we abuse the notation and still denote the rate as $\delta$. The remaining part of the proof is similar to the proof of Theorem \ref{thm:S+n_condition},  so we omit the details. This concludes the proof of Theorem \ref{thm:R2}.
\end{proof}

\begin{rmk}With Theorem \ref{thm:R2} proven, we make a comment on the second moment $V[n(t)]$ \eqref{Second_moment}. We estimate the time evolution of the second moment as follows
\begin{align}
\frac{d}{dt}\int n|x|^2 dx=&4M-\frac{1}{2\pi}M^2-\int x^2 \na \cdot(un)dx=4M-\frac{1}{2\pi}M^2+2\int x \cdot undx\\
\leq & 4M+||u||_\infty M^{1/2}\left(\int n|x|^2dx\right)^{1/2}.
\end{align}
Since the $||u||_\infty$ is bounded on arbitrary finite time interval, the second moment is bounded for any finite time.  
\end{rmk}

{In the last part of this section, we consider the long time behavior of the \emph{radially symmetric solutions} to the equation \eqref{pePKS-NS} and prove Theorem \ref{thm:radial_long_time}. 

To prove Theorem \ref{thm:radial_long_time}, we first rewrite the equation of the velocity in the vorticity form and present some necessary lemmas. Recall that the vorticity \[\omega=\na^\perp\cdot  u=\pa_{x_1}u^2-\pa_{x_2}u^1,\quad  \na^{\perp}=(-\pa_{x_2},\pa_{x_1}) \] and the velocity $u$ is related through the Biot-Savart law:
\begin{align}
u(t,x)= \na^\perp \de^{-1}\omega(t,x)=\frac{1}{2\pi}\na^\perp\int_{\rr^2}\log|x-y|\omega(t,y)dy=:\na^\perp \psi(t,x),  
\end{align}
where  $\psi$ is the stream function. In the vorticity formulation, the equation \eqref{pePKS-NS} has the following form
\begin{align}\label{pePKS-NS-vorticity-form}
\left\{\begin{array}{rrrrr}\ba
\pa_t n+&u\cdot \na n=\de n-\na \cdot( n\na c),\quad -\de c=n,\\
\pa_t \omega+&u\cdot \na \omega=\de \omega +\na ^\perp\cdot(n\na c),\quad u=\na^\perp\de^{-1}\omega,\\
n(t=&0,x)=n_{0}(x),\quad \omega(t=0,x)=\omega_{0}(x), \quad x\in  \rr^2.\ea\end{array}\right.
\end{align} 
 To show long time decay of the solution, it is classical to consider the solutions in the self-similar variables:
\begin{align}
n(t,x)=&\frac{1}{R^2(t)}N\left(\log R(t),\frac{x}{R(t)}\right),\quad c(t,x)=C\left(\log R(t),\frac{x}{R(t)}\right),\quad R(t)=(1+2t)^{1/2};\\
\omega(t,x)=&\frac{1}{R^2(t)}\Omega\left(\log R(t),\frac{x}{R(t)}\right),\quad \psi(t,x)=\Psi\left(\log R(t), \frac{x}{R(t)}\right).
\end{align}
We further consider the new coordinate $ \tau:=\log R(t),\, X:=\frac{x}{R(t)}$, and rewrite the equation \eqref{pePKS-NS-vorticity-form} in the following form:
\begin{align}\label{pePKS-NS-self-similar}
\left\{\begin{array}{ccc}\ba\pa_\tau N-\na\cdot(XN)=&\de N-\na \cdot(N\na C)-\na^\perp\Psi\cdot \na N,\quad -\de C=N,\\
\pa_\tau\Omega-\na \cdot(X\Omega)=&\de \Omega+\na ^\perp\cdot(N\na C)-\na^\perp \Psi\cdot \na \Omega,\quad \Omega=\de\Psi,\\
N(0,X)=&n_0(x),\quad \Omega(0,X)=\omega_0(x).
\ea\end{array}\right.
\end{align}
If the solution is radially symmetric, then the second moment of $N$ is uniformly bounded in time. This is the content of the next lemma.
\begin{lem}
Consider radially symmetric solutions $(N,\Omega)\in \mathrm{Lip} _\tau([0,\frac{1}{2}\log(1+2T)];H^s(\rr^2)),\, s\geq 3$ 
to the equation \eqref{E_self-similar} subject to initial  constraints in Corollary \ref{cor:radially_symmetric_case}. The second moment of the solution is bounded in time
\begin{align}
\sup_{\tau\in[0,\infty)}\int_{\rr^2} N(\tau,X) |X|^2dX\leq C_{s;V}<\infty.\label{Uniform-in-time-second-moment-bound}
\end{align}
\end{lem}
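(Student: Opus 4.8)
The plan is to derive a differential inequality for the self-similar second moment $W(\tau):=\int_{\rr^2}N(\tau,X)|X|^2\,dX$ directly from the first equation in \eqref{pePKS-NS-self-similar} and to show that the drift term $-\na\cdot(XN)$ introduces a linear damping $-2W$ that, combined with the classical chemotactic contribution $4M-\frac{1}{2\pi}M^2$, forces $W$ to stay bounded. First I would multiply the $N$-equation by $|X|^2$ and integrate by parts on each term: the diffusion $\de N$ produces $4M$; the chemotactic term $-\na\cdot(N\na C)$ produces, exactly as in the proof of Corollary \ref{cor:radially_symmetric_case}, the quantity $-\frac{1}{2\pi}M^2$ (using $-\de C=N$ and the identity $\int |X|^2\na\cdot(N\na C)\,dX = -2\int X\cdot\na C\, N\,dX = \frac1{2\pi}M^2$); the fluid transport term $-\na^\perp\Psi\cdot\na N$ vanishes by radial symmetry, since $\Psi$ radial forces $(X_1\pa_{X_2}-X_2\pa_{X_1})\Psi\equiv 0$, exactly as in \eqref{proof_cor}; and the drift term gives $\int |X|^2\,\na\cdot(XN)\,dX = -\int \na(|X|^2)\cdot X\, N\,dX = -2\int |X|^2 N\,dX = -2W$.

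This yields the clean ODE
\begin{align}
\frac{d}{d\tau}W(\tau) = -2W(\tau) + 4M - \frac{1}{2\pi}M^2,
\end{align}
whose solution is $W(\tau) = e^{-2\tau}W(0) + \left(2M - \frac{1}{4\pi}M^2\right)(1-e^{-2\tau})$. Since $M<8\pi$ the coefficient $2M-\frac{1}{4\pi}M^2$ is positive and finite, and $W(0)=\int_{\rr^2}n_0|x|^2\,dx<\infty$ by the hypothesis inherited from Corollary \ref{cor:radially_symmetric_case}, so $W(\tau)\le \max\{W(0),\,2M-\frac1{4\pi}M^2\}=:C_{s;V}$ for all $\tau\ge0$. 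This is precisely the claimed bound \eqref{Uniform-in-time-second-moment-bound}.

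The main technical point — rather than a genuine obstacle — is justifying all the integrations by parts and the decay of boundary terms at infinity. Here I would invoke the qualitative regularity assumption $(N,\Omega)\in\mathrm{Lip}_\tau([0,\tfrac12\log(1+2T)];H^s(\rr^2))$, $s\ge 3$, together with propagation of the finite second moment (which one checks by first working with a truncated weight $|X|^2\chi(X/R)$, deriving the same identity with controlled error, and then letting $R\to\infty$ using that $N$ and $\na N$ decay fast enough in $H^s$ and that $\na C$ has the explicit Newtonian-potential decay $|\na C|\lesssim M/|X|$ for large $|X|$). The radial-symmetry cancellation of the fluid term is the only place where the hypothesis of Corollary \ref{cor:radially_symmetric_case} is essential; everything else is the standard Patlak-Keller-Segel virial computation transplanted to self-similar variables, with the bonus damping $-2W$ coming for free from the rescaling. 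I expect the write-up to be short, with the only care needed being a remark that the estimate is independent of the higher Sobolev norms, exactly as flagged for the analogous claim \eqref{Claim}.
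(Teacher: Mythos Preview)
Your proposal is correct and follows essentially the same approach as the paper: both derive the ODE $\frac{d}{d\tau}W=-2W+4M-\frac{1}{2\pi}M^2$ by testing the $N$-equation against $|X|^2$, use radial symmetry to kill the fluid transport term, and conclude the uniform bound from the linear damping $-2W$. Your write-up is in fact more detailed than the paper's (which simply states the identity and says ``we see that the second moment is uniformly bounded in time''), and your remarks on justifying the integrations by parts are appropriate additions.
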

\begin{proof}
The calculation is similar to the calculation in Corollary \ref{cor:radially_symmetric_case}. Direct calculation yields that
\begin{align*}
\frac{d}{d\tau}\int N(\tau, X)|X|^2dX=&4M-\frac{1}{2\pi}M^2 -2\int N(\tau, X)|X|^2dX-\int \na \cdot(\na^\perp\Psi (\tau,X) N(\tau,X))|X|^2dX.
\end{align*} 
Since the solutions are radially symmetric, the last term is zero. Now we see that the second moment is uniformly  bounded in time. 
\end{proof}
For the equation \eqref{pePKS-NS-self-similar}, if the solution $(N,\Omega)$ is radially symmetric, there is a dissipative free energy:
\begin{align}\label{E_self-similar}
E_{S}[N,\Omega]=\int_{\rr^2} N\log N-\frac{1}{2}NC+\frac{1}{2}N|X|^2-\frac{1}{2}\Psi\Omega dX.
\end{align}
This is the content of the following lemma:
\begin{lem}
Consider radially symmetric solutions $(N,\Omega)\in \mathrm{Lip} _\tau([0,\frac{1}{2}\log (1+2T)];H^s(\rr^2)), \, s\geq 3$ 
to the equation \eqref{E_self-similar} subject to initial finite second moment constraint in Corollary \ref{cor:radially_symmetric_case}. The free  energy \eqref{E_self-similar} is dissipative in the sense that $\frac{d}{d\tau }E_S[N(\tau), \Omega(\tau)]\leq 0$.
\end{lem}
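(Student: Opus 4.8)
The plan is to differentiate $E_S$ along \eqref{pePKS-NS-self-similar} and express the result as a sum of manifestly non-positive dissipation integrals, paralleling the computation in Lemma \ref{Lem:Free_Energy_Decay} but carried out in self-similar variables. The first step is bookkeeping: since $\Omega=\de\Psi$ and $u=\na^\perp\Psi$, integration by parts gives $-\tfrac12\int\Psi\Omega\,dX=\tfrac12\int|\na\Psi|^2\,dX=\tfrac12\int|u|^2\,dX$, so $E_S$ is the self-similar Patlak--Keller--Segel free energy $\int N\log N-\tfrac12 NC+\tfrac12 N|X|^2\,dX$ plus the kinetic energy. Moreover, using $\na\cdot(XN)=\na\cdot(N\na\tfrac12|X|^2)$, the density equation takes the form $\pa_\tau N=\na\cdot(N\na(\log N+\tfrac12|X|^2-C))-u\cdot\na N$, and $\log N+1-C+\tfrac12|X|^2$ is precisely the variational derivative $\delta E_S/\delta N$ (the $-\tfrac12\int NC$ term contributing $-C$ by self-adjointness of $(-\de)^{-1}$); likewise $-\Psi$ is $\delta E_S/\delta\Omega$ (by self-adjointness of $\de^{-1}$).

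With these identifications I would write $\tfrac{d}{d\tau}E_S=\int(\log N+1-C+\tfrac12|X|^2)\,\pa_\tau N\,dX-\int\Psi\,\pa_\tau\Omega\,dX$, substitute the two evolution equations, and integrate by parts. The density contribution produces the good term $-\int N|\na\log N-\na C+X|^2\,dX$ plus a transport remainder $\int N\,u\cdot(\na\log N+X-\na C)\,dX$; the vorticity contribution produces $-\int\Psi\,\de\Omega\,dX=-\int\Omega^2\,dX$ plus three remainders: the self-similar drift term $\int(X\cdot\na\Psi)\,\de\Psi\,dX$, the coupling term $\int N\,u\cdot\na C\,dX$, and a transport remainder $\int\Psi\,u\cdot\na\Omega\,dX$. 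It then remains to show all five remainder integrals vanish.

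This is where radial symmetry is essential. For radially symmetric $N$, $C$, $\Omega$ (hence radially symmetric $\Psi$), the vectors $\na N$, $\na C$, $\na\Omega$ and $X$ all point in the radial direction, while $u=\na^\perp\Psi$ is purely azimuthal; consequently $u\cdot\na N=u\cdot\na C=u\cdot\na\Omega=u\cdot X=0$ pointwise, which annihilates all four transport/coupling remainders at once. The surviving self-similar drift term is handled by the integration-by-parts identity $\int(X\cdot\na\Psi)\,\de\Psi\,dX=-\int|\na\Psi|^2\,dX-\tfrac12\int X\cdot\na|\na\Psi|^2\,dX=-\int|\na\Psi|^2\,dX+\int|\na\Psi|^2\,dX=0$, which is simply the statement that the two-dimensional kinetic energy is invariant under the self-similar rescaling. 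Combining everything yields
\[
\frac{d}{d\tau}E_S[N,\Omega]=-\int_{\rr^2}N|\na\log N-\na C+X|^2\,dX-\int_{\rr^2}\Omega^2\,dX\le 0.
\]

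The main technical point to handle with care is the legitimacy of the integrations by parts, i.e. that no boundary terms appear at spatial infinity and that every integral above is finite. For this I would invoke the standing hypotheses: the regularity $(N,\Omega)\in\mathrm{Lip}_\tau([0,\tfrac12\log(1+2T)];H^s(\rr^2))$ with $s\ge 3$ gives ample decay of $N,\Omega$ and their first derivatives; the total vorticity $\int_{\rr^2}\Omega\,dX=\int_{\rr^2}\omega_0\,dx=0$ is conserved, so the stream function $\Psi$ and $u=\na^\perp\Psi$ decay at infinity; and the uniform second-moment bound \eqref{Uniform-in-time-second-moment-bound} together with the logarithmic Hardy--Littlewood--Sobolev inequality \eqref{log HLS} guarantees that $N\log N$, $NC$, $N|X|^2$ and $\Psi\Omega$ are all integrable and that the entropy flux is controlled. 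A routine cutoff/approximation argument, as used earlier in the paper, then upgrades the formal computation to a rigorous one.
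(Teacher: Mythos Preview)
Your proposal is correct and follows essentially the same route as the paper: differentiate $E_S$, obtain the two dissipation terms $-\int N|\na\log N-\na C+X|^2\,dX$ and $-\int\Omega^2\,dX$, kill the transport/coupling remainders via the pointwise orthogonality $u=\na^\perp\Psi\perp X,\na N,\na C,\na\Omega$ granted by radial symmetry, and dispose of the self-similar drift term by the Pohozaev-type identity $\int(X\cdot\na\Psi)\de\Psi\,dX=0$ in dimension two. The paper carries out the latter by an explicit coordinate expansion rather than your compact $-\int|\na\Psi|^2+\int|\na\Psi|^2=0$ argument, and it kills the vorticity transport term $\int\Psi\,u\cdot\na\Omega\,dX$ using $\na^\perp\Psi\cdot\na\Psi=0$ (valid without radial symmetry) rather than $u\cdot\na\Omega=0$, but these are cosmetic differences.
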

\begin{proof}
Direct calculation yields that
\begin{align}
\frac{d}{d\tau}\int &N\log N-\frac{1}{2}NC+\frac{1}{2}N|X|^2-\frac{1}{2}\Omega\Psi dX\\
=&\int N_\tau \left(\log N- C+\frac{1}{2}|X|^2\right)dX-\int \Omega_\tau\Psi dX\\
=&\int (\na\cdot(N \na\log N)-\na \cdot(N\na C)+\na\cdot (NX)- \na\cdot( \na^\perp \Psi N))\left(\log N-C+\frac{1}{2}|X|^2\right)dX \\
&-\int (\de \Omega+\na\cdot(X\Omega) -\na \cdot(\na^\perp \Psi \Omega) +\na^\perp\cdot(N\na C))\Psi dX\\
=&-\int N|\na \log N-\na C+X|^2dX-\int \Omega^2 dX\\
&+\int N \na^\perp\Psi \cdot (\na \log N-\na C+X)dX-\int \na\cdot (X \Omega)\Psi dX+\int N\na C\cdot \na^\perp\Psi dX\\
=&:-\int N|\na \log N-\na C+X|^2dX-\int \Omega^2 dX+\sum_{\ell=1}^3 T_{s;\ell}.\label{Ts123}
\end{align}
By the fact that for the  radially symmetric solutions, $\na ^\perp \Psi $ is perpendicular to the vectors $X,\na C, \na N$, we have that $T_{s;1}=T_{s;3}=0$. For the $T_{s;2}$ term in \eqref{Ts123}, we have that 
\begin{align}
T_{s;2}=& \int X_1\pa_{X_1}\Psi (\pa_{X_1X_1}\Psi+\pa_{X_2X_2} \Psi)dX+\int X_2\pa_{X_2} \Psi (\pa_{X_1X_1}\Psi+\pa_{X_2X_2}\Psi) dX\\
=&\frac{1}{2}\int\pa_{X_1}(\pa_{X_1}\Psi)^2 X_1dX-\frac{1}{2}\int\pa_{X_1}(\pa_{X_2}\Psi)^2 X_1 dX\\
& -\frac{1}{2}\int\pa_{X_2}(\pa_{X_1}\Psi)^2X_2dX+\frac{1}{2}\int\pa_{X_2}(\pa_{X_2}\Psi)^2X_2dX\\
=&0.
\end{align} Combining these calculations and the relation \eqref{Ts123}, we have obtained that the free energy is decaying.
\end{proof}
\begin{proof}[Proof of Theorem \ref{thm:radial_long_time}]
Since the free energy is bounded and the second moment is bounded \eqref{Uniform-in-time-second-moment-bound}, through a standard argument involving logarithmic Hardy-Littlewood-Sobolev inequality, which is similar to the ones to prove Theorem  \ref{thm:Second_moment_criterion} and Theorem \ref{thm:S+n_condition}, we have that the solution $N,\Omega$ is uniformly bounded in time, i.e., $||N||_{L_\tau^\infty([0,\infty);L^2_X)}+||\Omega||_{L_\tau^\infty([0,\infty);L_X^2)}\leq C_s<\infty$. Now by the relation between the $L_{x}^2$ norm and the $L_{X}^2$  norm, we have that 
\begin{align}
||n(t)||_{L_{x}^2}^2+||\omega(t)||_{L_x^2}^2\leq \frac{1}{R^2(t)}\left(||N||_{L_\tau^\infty([0,\infty);L^2_X)}^2+||\Omega||_{L_\tau^\infty([0,\infty);L^2_X)}^2\right)=\frac{1}{1+2t}C_s^2,\quad \forall t\in[0,\infty).
\end{align} 
This concludes the proof of the theorem. 
\end{proof}
}
\section{Torus Case: $\mathbb{T}^2$}\label{Sec:T2}
Before we start the proof of Theorem \ref{thm:Torus}, we first collect some useful facts. Without loss of generality, we assume that the average of the velocity $u$ is zero, i.e.,
\begin{align*}
\frac{1}{|\mathbb{T}^2|}\int_{\mathbb{T}^2}u_0^i(x)dx=0,\quad i=1,2.
\end{align*}
The average-zero properties are propagated along the dynamics \eqref{pePKS-NS-Torus}. To check this, we calculate the time evolution of the mean using the divergence-free condition of $u$ and the elliptic equation of the chemical $c$ as follows: 
\begin{align*}
\frac{d}{dt}\int u^idx
=&-\sum_{j=1}^2\int u^j\pa_j u^i dx+\int n\pa_i c dx
=\int (\na\cdot u) u^idx+\int\left(- (\pa_1\pa_1+\pa_2\pa_2)c+\overline{n}\right)\pa_i cdx\\
=&\int \pa_1 c\pa_i \pa_1 c dx+\int \pa_2 c\pa_i\pa_2 cdx
=\sum_{j=1}^2\int \frac{1}{2}\pa_i(\pa_j c)^2dx=0.
\end{align*}
As a result, $\overline{u^i}=0, \, i=1,2$ as long as the solution is smooth. 

Now we study the 2D free energy of $n$ on $\mathbb{T}^2$:
\begin{align}\label{ET2}
E_{\mathbb{T}^2}[n,u]=\int_{\mathbb{T}^2} n\log n -\frac{1}{2}(n-\overline{n})c +\frac{1}{2}|u|^2dx.
\end{align}
\begin{lem}Consider the smooth solution to the equation \eqref{pePKS-NS-Torus}, the free energy $E_{\mathbb{T}^2}$ \eqref{ET2} is dissipated along the dynamics, i.e.,
\begin{align}\label{T3 E bound}
E_{\mathbb{T}^2}[n(t),u(t)] \leq E_{\mathbb{T}^2}[n_0,u_0],\quad \forall t\geq 0.
\end{align}
\end{lem}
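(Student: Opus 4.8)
The plan is to mimic the dissipation computation of Lemma \ref{Lem:Free_Energy_Decay}, being careful about the modifications forced by the torus: the chemical equation now reads $-\de c = n - \overline{n}$, and there is a subtlety in which ``density'' pairs with $c$ in the cross term. First I would differentiate $E_{\mathbb{T}^2}$ in time, splitting into the three pieces $\frac{d}{dt}\int n\log n\,dx$, $-\frac12\frac{d}{dt}\int (n-\overline{n})c\,dx$, and $\frac12\frac{d}{dt}\int |u|^2\,dx$. For the entropy piece, using the $n$-equation and integration by parts gives $\int (\log n + 1)(\de n - u\cdot\na n - \na\cdot(n\na c))\,dx$; the constant $1$ integrates against a divergence-form expression and the $u\cdot\na n$ term drops by $\na\cdot u = 0$, leaving the usual $-\int \frac{|\na n|^2}{n}\,dx + \int \na n \cdot \na c\,dx$. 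Since $\overline n$ is conserved in time (the $n$-equation is in divergence form and $u$ is divergence-free), $\frac{d}{dt}\overline n = 0$, so the $\overline n c$ contribution to the cross term only produces $-\frac12\overline n \frac{d}{dt}\int c\,dx$; one checks $\int c\,dx$ is constant (e.g.\ it is pinned by the normalization of the Green's function, or its time derivative vanishes after integrating the $c$-equation against the $n$-equation), so this term contributes nothing.

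For the cross term $-\frac12\frac{d}{dt}\int n c\,dx$, I would use $\frac{d}{dt}\int nc\,dx = \int n_t c\,dx + \int n c_t\,dx$ and the key identity $\int n_t c\,dx = \int n c_t\,dx$, which follows from $-\de c = n - \overline n$ together with $\int n_t\,dx = 0$: indeed $\int n c_t\,dx = \int (-\de c + \overline n) c_t \,dx = \int \na c\cdot\na c_t\,dx + \overline n\frac{d}{dt}\int c \, dx = \frac12\frac{d}{dt}\int |\na c|^2\,dx$, and symmetrically $\int n_t c\,dx$ equals the same thing once we note $\int n_t c\,dx = \int n_t(-\de)^{-1}(n-\overline n)\,dx$ is symmetric in the two time-slots. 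Hence $-\frac12\frac{d}{dt}\int nc\,dx = -\int n_t c\,dx = -\int (\de n - u\cdot\na n - \na\cdot(n\na c))\,c\,dx$, and integrating by parts yields $-\int \na n\cdot\na c\,dx + \int (u\cdot\na n)c\,dx + \int n|\na c|^2\,dx$. Combining with the entropy piece, the $\pm\int\na n\cdot\na c$ terms cancel and the remaining terms assemble into $-\int n|\na\log n - \na c|^2\,dx$ plus the stray advective term $\int (u\cdot\na n)c\,dx = -\int n u\cdot\na c\,dx$ (using $\na\cdot u=0$).

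Finally, for the fluid piece, testing the velocity equation (after Leray projection, as in \eqref{eqn:flow}) against $u$ gives $\frac12\frac{d}{dt}\|u\|_2^2 = -\|\na u\|_2^2 - \int u\cdot B(u,u)\,dx + \int n\na c\cdot u\,dx$, and the nonlinear term vanishes since $\int u\cdot((u\cdot\na)u)\,dx = \int u\cdot\na\frac{|u|^2}{2}\,dx = 0$. The forcing term $+\int n u\cdot\na c\,dx$ exactly cancels the stray $-\int n u\cdot\na c\,dx$ left over from the chemical coupling. Altogether
\begin{align*}
\frac{d}{dt}E_{\mathbb{T}^2}[n,u] = -\int_{\mathbb{T}^2} n|\na\log n - \na c|^2\,dx - \int_{\mathbb{T}^2}|\na u|^2\,dx \le 0,
\end{align*}
which gives \eqref{T3 E bound} after integrating in time.

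\textbf{Main obstacle.} The computation itself is routine once set up, so the only genuine care-points — and where I expect the bookkeeping to be most delicate — are (i) correctly tracking the role of $\overline n$: verifying $\frac{d}{dt}\overline n = 0$ and that the $\overline n c$ terms in $E_{\mathbb{T}^2}$ and in the identity $\int n_t c = \int n c_t$ genuinely cancel or vanish, rather than leaving a residual; and (ii) justifying $\int n_t c\,dx = \int n c_t\,dx$, which relies on the self-adjointness of $(-\de)^{-1}$ acting on mean-zero functions on $\mathbb{T}^2$ — one must be sure the mean-zero condition $\int (n - \overline n)\,dx = 0$ is what makes $(-\de)^{-1}$ well-defined and symmetric here. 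These are exactly the places where the torus differs from the $\rr^2$ case of Lemma \ref{Lem:Free_Energy_Decay}; everything else (the advective cancellations from $\na\cdot u = 0$, the vanishing of $\int u\cdot(u\cdot\na)u$, and the matching of the $\pm n u\cdot\na c$ terms) transfers verbatim. Regularity of the solution (smoothness, hence legitimacy of all integrations by parts on the compact manifold $\mathbb{T}^2$ with no boundary terms) is assumed in the statement, so it poses no difficulty.
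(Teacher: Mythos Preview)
Your approach is exactly that of the paper (which simply writes $\frac{d}{dt}E_{\mathbb{T}^2}=\int n_t(\log n-c)\,dx-\int|\na u|^2\,dx+\int u\cdot\na c\,n\,dx$ and collapses everything in two lines), and your handling of the $\overline{n}$ bookkeeping and the self-adjointness of $(-\Delta)^{-1}$ on mean-zero functions is correct. However, there is a sign slip in your cross-term computation that makes the intermediate claim false as written.

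Starting from $-\int n_t c\,dx = -\int(\Delta n - u\cdot\na n - \na\cdot(n\na c))\,c\,dx$ and integrating by parts once in the first and third pieces gives
\[
-\int n_t c\,dx \;=\; +\int \na n\cdot\na c\,dx \;+\; \int (u\cdot\na n)\,c\,dx \;-\; \int n|\na c|^2\,dx,
\]
not $-\int \na n\cdot\na c\,dx + \int (u\cdot\na n)c\,dx + \int n|\na c|^2\,dx$ as you wrote. Consequently, the two $\int\na n\cdot\na c$ contributions (from the entropy piece and from the cross piece) do \emph{not} cancel: they are both $+\int\na n\cdot\na c\,dx$ and combine to $2\int\na n\cdot\na c\,dx$, which is exactly the cross term needed to complete the square
\[
-\int \frac{|\na n|^2}{n}\,dx + 2\int \na n\cdot\na c\,dx - \int n|\na c|^2\,dx \;=\; -\int n\,|\na\log n - \na c|^2\,dx.
\]
With your stated signs the leftover would be $-\int|\na n|^2/n\,dx + \int n|\na c|^2\,dx$, which has no sign. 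Once the two signs are corrected, your argument goes through verbatim and yields the paper's identity $\frac{d}{dt}E_{\mathbb{T}^2}[n,u]=-\int n|\na\log n-\na c|^2\,dx-\int|\na u|^2\,dx$. The paper avoids this bookkeeping altogether by pairing $n_t$ against $(\log n - c)$ from the outset, so the square appears directly without splitting and recombining.
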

\begin{proof}
Direct calculation of the time derivative of $\cF[n]$ can be estimated as follows
\begin{align}
\frac{d}{dt}E_{\mathbb{T}^2}[n,u]=&\int n_t(\log n-c)dx-\int|\na u|^2dx+\int u\cdot \na c ndx\\
=&-\int 
n(\na\log n-\na c )\cdot(\na \log n-\na c)dx+\int \na\cdot(u n)cdx-\int|\na u|^2dx+\int u\cdot \na c ndx\\
=&-\int n|\na \log n-\na c|^2dx-\int|\na u|^2dx\label{T3 d dt E}.
\end{align}
\end{proof}

The decaying free energy (\ref{T3 E bound}), together with a suitable logarithmic Hardy-Littlewood-Sobolev inequality yields a uniform-in-time bound on the positive component of the entropy $S^+[n]$. To explicitly derive the bound, we recall the following logarithmic Hardy-Littlewood-Sobolev inequality on a compact manifold:
\begin{thm}\cite{SW}
Let $\mathcal{M}$ be a two-dimensional, Riemannian, compact manifold. For all $M > 0$, there exists a constant $C(M)$ such that for all non-negative functions $f \in L^1(\mathcal{M})$ such that $f \log f \in L^1$, if $\int_{\mathcal{M}} f dx = M$, then
\begin{align}\label{log HLS T2}
\int_{\mathcal{M}}f \log fdx+\frac{2}{M}\iint_{\mathcal{M}\times\mathcal{M}} f(x) f(y) \log d(x,y) dxdy\geq -C(M),
\end{align}
where $d(x,y)$ is the distance on the Riemannian manifold.
\end{thm}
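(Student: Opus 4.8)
The plan is to deduce the manifold inequality \eqref{log HLS T2} from its Euclidean counterpart \eqref{log HLS} by a localization argument. The guiding heuristic is that the sharp coefficient $\tfrac{2}{M}$ in \eqref{log HLS} encodes a \emph{local} concentration phenomenon: since on a compact manifold $d(x,y)\le\mathrm{diam}(\mathcal{M})$, the only mechanism that can drive the double integral to $-\infty$ is near-diagonal concentration of mass, and near the diagonal $\mathcal{M}$ is indistinguishable, up to bounded metric and volume distortion, from $\rr^2$. An additional bonus of compactness is that the negative part of the entropy, $S^-[f]:=\int_{\mathcal{M}}f\log^-f\,dx$, is automatically finite (since $s\log^-s\le e^{-1}$ for $s\ge0$, one has $S^-[f]\le e^{-1}|\mathcal{M}|$), so unlike on $\rr^2$ no second-moment hypothesis is needed.

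\textbf{Geometric setup and splitting.} First I would fix a radius $r>0$, smaller than a fixed fraction of the injectivity radius of $\mathcal{M}$, and finitely many points $x_1,\dots,x_N$ so that the balls $B(x_i,r)$ cover $\mathcal{M}$. On each $\widehat B_i:=B(x_i,2r)$ introduce normal coordinates $\varphi_i$; by compactness these can be chosen with a uniform distortion constant $C_0\ge1$ such that $C_0^{-1}|\varphi_i(x)-\varphi_i(y)|\le d(x,y)\le C_0|\varphi_i(x)-\varphi_i(y)|$ and such that the pushed-forward volume density $J_i=d(\varphi_i)_*\mathrm{vol}/d\xi$ satisfies $C_0^{-1}\le J_i\le C_0$. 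Let $V_i:=B(x_i,r)\setminus\bigcup_{j<i}B(x_j,r)$, a Borel partition of $\mathcal{M}$ with $V_i\subset B(x_i,r)$. The triangle inequality gives: if $x\in V_i$ and $d(x,y)<r$ then $y\in\widehat B_i$. I then split
\begin{align*}
\iint_{\mathcal{M}\times\mathcal{M}}f(x)f(y)\log d(x,y)\,dx\,dy=\iint_{d\ge1}+\iint_{r\le d<1}+\sum_{i=1}^N\int_{V_i}f(x)\Big(\int_{d(x,y)<r}f(y)\log d(x,y)\,dy\Big)dx.
\end{align*}
The first integral is $\ge0$; the second is $\ge(\log r)M^2$. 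In the $i$-th term of the last sum, $y\in\widehat B_i$, so $\log d(x,y)\ge\log|\varphi_i(x)-\varphi_i(y)|-\log C_0$; enlarging the $y$-integral from $\{d(x,y)<r\}$ to $V_i$ costs only a bounded error, since on the discarded set $|\varphi_i(x)-\varphi_i(y)|$ is bounded above and bounded away from $0$. Hence the last sum is bounded below by $\sum_i\iint_{V_i\times V_i}f(x)f(y)\log|\varphi_i(x)-\varphi_i(y)|\,dx\,dy$ minus a constant depending only on $(\mathcal{M},M)$.

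\textbf{Passing to $\rr^2$.} Next I would push each localized piece to the plane: set $\widetilde f_i:=(f\circ\varphi_i^{-1})\,J_i\,\mathbf{1}_{\varphi_i(V_i)}$, a compactly supported nonnegative $L^1(\rr^2)$ function with $\int_{\rr^2}\widetilde f_i=\int_{V_i}f=:M_i$ and $\sum_iM_i=M$, so that the change of variables gives $\iint_{V_i\times V_i}f(x)f(y)\log|\varphi_i(x)-\varphi_i(y)|\,dx\,dy=\iint_{\rr^2\times\rr^2}\widetilde f_i(\xi)\widetilde f_i(\zeta)\log|\xi-\zeta|\,d\xi\,d\zeta$. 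Applying the Euclidean log-HLS inequality \eqref{log HLS} to each $\widetilde f_i$ yields $\iint\widetilde f_i\widetilde f_i\log|\cdot-\cdot|\ge-\tfrac{M_i}{2}\int\widetilde f_i\log\widetilde f_i-\tfrac{M_i}{2}C(M_i)$, and since the volume distortion shifts entropy only by a bounded amount, $\int_{\rr^2}\widetilde f_i\log\widetilde f_i=\int_{V_i}f\log f\,dx+O(M_i\log C_0)$. Summing over $i$, using $M_i\le M$, the elementary bound $\sum_i\big(\int_{V_i}f\log f\,dx\big)^+\le\int_{\mathcal{M}}f\log^+f\,dx=S^+[f]$, and the boundedness of $C(\cdot)$ on $[0,M]$, I obtain a lower bound of the form
\begin{align*}
\iint_{\mathcal{M}\times\mathcal{M}}f(x)f(y)\log d(x,y)\,dx\,dy\ \ge\ -\tfrac{M}{2}\,S^+[f]\ -\ C_1(\mathcal{M},M).
\end{align*}

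\textbf{Conclusion and main obstacle.} Inserting this into the left side of \eqref{log HLS T2} and writing $\int f\log f=S^+[f]-S^-[f]$, the $S^+[f]$ contributions cancel \emph{exactly} — this is precisely where the sharpness of the constant $\tfrac2M$ in \eqref{log HLS} is indispensable; any weaker constant would leave an uncontrolled positive multiple of $S^+[f]$ — leaving $\int f\log f+\tfrac2M\iint ff\log d\ge-S^-[f]-\tfrac2M C_1\ge-e^{-1}|\mathcal{M}|-\tfrac2M C_1$, which is \eqref{log HLS T2} with $C(M)=e^{-1}|\mathcal{M}|+\tfrac2M C_1(\mathcal{M},M)$. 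I expect the main obstacle to be the geometric bookkeeping in the localization step: arranging a single cover in which every near-diagonal pair sits in one chart with uniformly controlled metric and volume distortion, and then verifying that no step secretly introduces a dependence on Sobolev or derivative norms of $f$ — each step must use only $\|f\|_{L^1}$, $\int f\log f$, and the Euclidean inequality. For the concrete case $\mathcal{M}=\Torus^2$ one can bypass this by using the explicit periodic Green's function $B_{\Torus^2}$ together with the identity $-2\pi B_{\Torus^2}(x,y)=\log d(x,y)+(\text{bounded continuous})$ and the positivity $\iint(f-\overline f)\,B_{\Torus^2}\,(f-\overline f)\ge0$, but the localization argument above is the one that adapts to a general compact surface.
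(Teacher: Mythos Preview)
The paper does not prove this theorem: it is quoted verbatim from Shafrir--Wolansky \cite{SW} and used as a black box, so there is no proof in the paper to compare against.

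That said, your localization strategy has a genuine gap. The claim that ``enlarging the $y$-integral from $\{d(x,y)<r\}$ to $V_i$ costs only a bounded error, since on the discarded set $|\varphi_i(x)-\varphi_i(y)|$ is bounded above and bounded away from $0$'' is false. For $x\in V_i$ close to $\partial V_i$ there are points $y\notin V_i$ with $d(x,y)$ arbitrarily small, so on $\{d(x,y)<r\}\setminus V_i$ the quantity $|\varphi_i(x)-\varphi_i(y)|$ is \emph{not} bounded away from $0$. Concretely, if $f=f_\epsilon$ approximates $M\delta_p$ with $p\in\partial V_i\cap\partial V_j$, then half of the interaction $\iint f_\epsilon f_\epsilon\log d\sim M^2\log\epsilon$ sits in the off-block region $(V_i\times V_j)\cup(V_j\times V_i)$, and the asserted inequality
\[
\iint_{d<r}f(x)f(y)\log d(x,y)\,dx\,dy\ \ge\ \sum_i\iint_{V_i\times V_i}f(x)f(y)\log|\varphi_i(x)-\varphi_i(y)|\,dx\,dy\ -\ C
\]
fails by an amount $\sim\tfrac{M^2}{2}\log\epsilon\to-\infty$. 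The obvious repair --- enlarging each block to the overlapping chart $\widehat B_i$ and applying \eqref{log HLS} to $f|_{\widehat B_i}$ --- overcounts the entropy by the covering multiplicity $K>1$ and replaces the coefficient of $S^+[f]$ in your final line by $K$, which destroys exactly the cancellation you identify as indispensable. The sharp constant on a compact surface is a genuinely global statement; the arguments that obtain it (as in \cite{SW}) go through duality with Moser--Trudinger--Onofri type inequalities and concentration-compactness, not by patching local Euclidean pieces. Your closing remark about $\Torus^2$ is also not a proof: the positivity $\iint(f-\overline f)\,B_{\Torus^2}\,(f-\overline f)\ge0$ gives only an \emph{upper} bound $\iint ff\log d\le C M^2$, which is the trivial direction.
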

Since the logarithmic Hardy-Littlewood-Sobolev inequality \eqref{log HLS T2} is stated with respect to the distance on the torus, we cannot directly combine it with the decaying free energy \eqref{T3 E bound} here. To overcome this difficulty, we estimate the potential part of the free energy, i.e., $\frac{1}{2}\int(n-\overline{n})c dx$, from below. This is the main content of the next lemma.
\begin{lem}\label{lem: Free_Energy_T2}
There exists a constant $B>0$, such that the following estimate holds
\begin{align}
-\frac{1}{2}\int_{\mathbb{T}^2}(n-\overline{n})c dx= -\frac{1}{2}\int_{\mathbb{T}^2}(n-\overline{n})(-\de)^{-1}(n-\overline{n}) dx\geq\frac{1}{4\pi}\iint_{\mathbb{T}^2\times\mathbb{T}^2}\log d(y,x)n(y)n(x) dydx-BM^2.
\end{align}
\end{lem}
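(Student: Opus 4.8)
The plan is to isolate the logarithmic singularity of the Green's function of $-\de$ on $\Torus^2$ and absorb the rest into an $O(M^2)$ error. Start from $-\frac12\int_{\Torus^2}(n-\overline{n})c\,dx = -\frac12\int_{\Torus^2}(n-\overline{n})(-\de)^{-1}(n-\overline{n})\,dx = -\frac12\iint_{\Torus^2\times\Torus^2} G(x,y)(n(x)-\overline{n})(n(y)-\overline{n})\,dx\,dy$, where $G$ is the kernel of $(-\de)^{-1}$ on mean-zero functions (so $G(x,y)=-B_{\Torus^2}(x,y)$ with $B_{\Torus^2}$ as in the definition of $c$). The structural fact I would record first is the splitting
\[
G(x,y) = -\frac{1}{2\pi}\log d(x,y) + R(x,y),\qquad \norm{R}_{L^\infty(\Torus^2\times\Torus^2)} =: C_0 < \infty,
\]
which holds because the geodesic distance agrees with the Euclidean distance near the diagonal, so the $-\frac{1}{2\pi}\log\abs{x-y}$ singularity of $G$ is cancelled exactly; the remainder $R$ then extends continuously (indeed smoothly) to the compact manifold $\Torus^2\times\Torus^2$ and is therefore bounded.

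Next I would substitute this splitting and use the mean-zero identity $\int_{\Torus^2}(n-\overline{n})\,dx=0$ to rewrite the $\log d$ piece in terms of $n(x)n(y)$ alone. Expanding $(n(x)-\overline{n})(n(y)-\overline{n})$ produces cross terms $\overline{n}\iint\log d(x,y)n(x)\,dx\,dy$ and $\overline{n}^2\iint\log d(x,y)\,dx\,dy$; by translation invariance of the torus, $\kappa:=\int_{\Torus^2}\log d(x,y)\,dx$ is a constant independent of $y$, and since the diameter of $\Torus^2$ is $1/\sqrt2<1$ we have $\kappa<0$. Collecting terms (recall $\overline{n}=M$ since $\abs{\Torus^2}=1$) gives
\[
\frac{1}{4\pi}\iint_{\Torus^2\times\Torus^2}\log d(x,y)(n(x)-\overline{n})(n(y)-\overline{n})\,dx\,dy = \frac{1}{4\pi}\iint_{\Torus^2\times\Torus^2}\log d(x,y)n(x)n(y)\,dx\,dy - \frac{\kappa}{4\pi}M^2,
\]
and the last term is nonnegative.

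For the remainder I would simply estimate $\bigl|\frac12\iint R(x,y)(n(x)-\overline{n})(n(y)-\overline{n})\,dx\,dy\bigr| \le \frac{C_0}{2}\bigl(\int_{\Torus^2}\abs{n-\overline{n}}\,dx\bigr)^2 \le \frac{C_0}{2}(2M)^2 = 2C_0M^2$, using $\int_{\Torus^2}\abs{n-\overline{n}}\,dx \le \int_{\Torus^2}n\,dx + \overline{n}\abs{\Torus^2} = 2M$. Combining the three contributions yields the claim with $B = 2C_0$, since the $-\frac{\kappa}{4\pi}M^2$ term only improves the lower bound. The only genuinely non-routine point is the first step — establishing the uniform bound $\norm{R}_{L^\infty}<\infty$ in the Green's-function splitting — which is classical but should be stated with care (matching of the logarithmic singularities on the diagonal, continuity of $R$ on the compact product manifold); everything afterwards is bookkeeping with the constraint $\int(n-\overline{n})=0$ and the crude $L^1$ bound on $n-\overline{n}$.
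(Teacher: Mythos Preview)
Your argument is correct. Both proofs exploit the same underlying fact---that the Green's function of $-\Delta$ on $\Torus^2$ equals $-\frac{1}{2\pi}\log d(x,y)$ plus a bounded remainder---but they arrive at it differently. The paper does not invoke the decomposition $G=-\frac{1}{2\pi}\log d+R$ as a black box; instead it builds a parametrix by hand: for each $x$ it introduces a cutoff $\varphi_x$ supported in $B(x,1/4)$, rewrites $-\Delta(\varphi_x c)$ on $\rr^2$ via periodic extension, applies the Newtonian potential, and then bounds the various off-diagonal and commutator terms (those supported in the annulus $\frac18\le d(x,y)\le\frac14$) using $\|c\|_{L^1(\Torus^2)}\lesssim M$ from Young's inequality. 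Your route is shorter and more conceptual---once the boundedness of $R$ is granted, the rest is indeed bookkeeping with $\int(n-\overline n)=0$ and $\|n-\overline n\|_{L^1}\le 2M$---whereas the paper's route is more self-contained, effectively \emph{proving} the remainder bound inside the estimate rather than citing it. Your use of translation invariance to see that $\kappa=\int_{\Torus^2}\log d(x,y)\,dy$ is a negative constant (hence the cross terms help rather than hurt) is a nice simplification that the paper replaces by cruder absolute-value bounds on the corresponding integrals.
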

\begin{proof}
The proof of the lemma is the same as the parallel treatment in the paper \cite{BedrossianHe16}. For the sake of completeness, we provide the proof in the appendix.
\end{proof}
Combining Lemma \ref{lem: Free_Energy_T2} with (\ref{T3 E bound}) yields
\be\ba
 \cF&_{\mathbb{T}^2}[n_{0}, u_0]\\
 \geq& \left(1- \frac{M}{8\pi}\right)\int_{\mathbb{T}^2} n\log ndx+ \frac{M}{8\pi} \left( \int_{\mathbb{T}^2} n\log ndx +\frac{2}{M}\iint_{\mathbb{T}^2\times\mathbb{T}^2}n(y)\log d(y,x)n(x) dydx\right)+\frac{||u||_2^2}{2}-BM^2.
\ea\ee
Applying (\ref{log HLS T2}) in the above estimate, we obtain
\begin{align}
 \cF_{\mathbb{T}^2}[n_{0},u_0]\geq&\left(1-\frac{M}{8\pi}\right)\int_{\mathbb{T}^2} n\log ndx+\frac{||u||_2^2}{2}-C(M)-BM^2,
\end{align}
which results in
\begin{align}
\int_{\mathbb{T}^2} n\log ndx + \frac{1}{2(1-\frac{M}{8\pi})}||u||_2^2\leq& \frac{ \cF_{\mathbb{T}^2}[n_{0}, u_0]+C(M)+BM^2}{1-\frac{M}{8\pi}}.
\end{align}
Since the function $s\log s$ is bounded from below, the negative part of the entropy $S^-[n]=\int_{\mathbb{T}^2}n\log^- ndx$ is bounded on the torus. Therefore, there exists a constant $C_{L \log L}$ depending only on the initial data such that the following estimate holds:
\be
\int_{\mathbb{T}^2} n\log ^+n dx+||u||_2^2\leq C_{L\log L;L^2}(E_{\mathbb{T}^2}[n_{0}, u_0], M) < \infty.
\ee
The estimation above yields the following lemma.
\begin{lem} \label{lem:nlognbdT2}
If the total mass is bounded $||n_{0}||_{L^1}<8\pi$, there exists a constant $C_{L\log L}(n_{0},u_0)$ such that
\begin{align}\label{T3_entropy_bound} \int_{\mathbb{T}^2} n(t,x)\log^+ n(t,x)dx +||u(t )||_{L^2_x}^2\leq C_{L\log L;L^2}(E_{\mathbb{T}^2}[n_{0},u_0], M)<\infty,\quad\forall t\in[0,\infty) . 
\end{align}
\end{lem}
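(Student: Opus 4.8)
The plan is to extract the claimed bound directly from the free-energy dissipation \eqref{T3 E bound}, the potential-energy lower bound of Lemma \ref{lem: Free_Energy_T2}, and the logarithmic Hardy--Littlewood--Sobolev inequality on the torus \eqref{log HLS T2}. First I would start from $E_{\mathbb{T}^2}[n(t),u(t)] \le E_{\mathbb{T}^2}[n_0,u_0]$, expand the left-hand side, and split the entropy in the subcritical proportion
\begin{align*}
\int_{\mathbb{T}^2} n\log n\,dx = \Big(1-\tfrac{M}{8\pi}\Big)\int_{\mathbb{T}^2} n\log n\,dx + \tfrac{M}{8\pi}\int_{\mathbb{T}^2} n\log n\,dx .
\end{align*}
The point of the splitting is that the second piece, carrying the weight $M/8\pi$, can be combined with the interaction term once the latter is expressed against the geodesic-distance kernel $\log d(x,y)$.

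Next I would use Lemma \ref{lem: Free_Energy_T2} to bound $-\tfrac12\int_{\mathbb{T}^2}(n-\overline n)c\,dx$ from below by $\tfrac{1}{4\pi}\iint \log d(y,x)\,n(y)n(x)\,dy\,dx - BM^2$. Since $\tfrac{1}{4\pi} = \tfrac{M}{8\pi}\cdot\tfrac{2}{M}$, the quantity $\tfrac{M}{8\pi}\big(\int n\log n + \tfrac{2}{M}\iint \log d(y,x)\, n\, n\big)$ is precisely the combination appearing in \eqref{log HLS T2} with $\mathcal M=\mathbb{T}^2$, hence bounded below by $-\tfrac{M}{8\pi}C(M)$. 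Substituting and rearranging yields
\begin{align*}
\Big(1-\tfrac{M}{8\pi}\Big)\int_{\mathbb{T}^2} n\log n\,dx + \tfrac12\|u(t)\|_{L^2}^2 \le E_{\mathbb{T}^2}[n_0,u_0] + \tfrac{M}{8\pi}C(M) + BM^2 .
\end{align*}
Because $M<8\pi$ makes $1-M/8\pi$ strictly positive, both $\int n\log n$ and $\|u(t)\|_{L^2}^2$ are then bounded above by a constant depending only on $M$ and the initial data.

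To conclude, I would pass from the full entropy to its positive part: since the torus has area $1$ and $-s\log s \le e^{-1}$ for $s\in(0,1)$, one has $S^-[n]=\int_{\mathbb{T}^2} n\log^- n\,dx\le e^{-1}$, so $S^+[n]=\int n\log n + S^-[n]$ inherits the bound, and adding back the already-controlled $\|u(t)\|_{L^2}^2$ gives \eqref{T3_entropy_bound}. The only genuinely delicate ingredient is Lemma \ref{lem: Free_Energy_T2} itself: the torus log-HLS is phrased in terms of the geodesic distance $d(x,y)$, whereas the potential energy is governed by the periodic Green's function $B_{\mathbb{T}^2}(x,y)$, which coincides with $-\tfrac{1}{2\pi}\log d(x,y)$ only up to a bounded, and not sign-definite, correction. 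Obtaining a one-sided (lower) bound of the correct shape with only an $O(M^2)$ loss is the crux, and this is the estimate imported from \cite{BedrossianHe16}; the rest is bookkeeping.
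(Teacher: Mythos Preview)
Your proposal is correct and follows essentially the same route as the paper: split the entropy in the subcritical proportion $1-\tfrac{M}{8\pi}$ vs.\ $\tfrac{M}{8\pi}$, apply Lemma~\ref{lem: Free_Energy_T2} to rewrite the interaction against $\log d(x,y)$ at an $O(M^2)$ cost, invoke the torus logarithmic HLS inequality \eqref{log HLS T2}, and then use the boundedness of $-s\log s$ on $(0,1)$ together with $|\mathbb{T}^2|=1$ to control $S^-[n]$. The paper's argument is identical in structure; your explicit bound $S^-[n]\le e^{-1}$ is just a sharper form of the paper's remark that ``$s\log s$ is bounded from below''.
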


As in the plane case, the uniform in time bound on the positive part of the entropy $S^+[n]$ yields the bound on the $L^p$ norms. This is the content of the next lemma.
\begin{lem} \label{lem:L2T}Assume that the entropy is bounded in the sense that (\ref{T3_entropy_bound}) holds, then there exists a constant $C_{1,\infty}=C_{1,\infty}(n_0, u_0)$ such that the following estimate holds
\begin{align}\label{T3_L2_bound_of_n0}
||n(t)||_{
L^1\cap L^\infty}\leq C_{1,\infty}(||n_{0}||_{L^1\cap L^\infty},E_{\mathbb T^2}[n_0,u_0])<\infty,\quad \forall t\in[0,\infty).
\end{align}
\end{lem}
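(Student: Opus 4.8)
The strategy is the standard Moser–Alikakos iteration adapted to the torus, exploiting the fact that, on $\Torus^2$, the nonlinear aggregation term is $L\log L$-supercritical only by a logarithmic margin, so that a uniformly small amount of high-density mass suffices to absorb the bad term into the dissipation. First I would record the conserved quantity $M = \|n(t)\|_{L^1} = \|n_0\|_{L^1}$, which follows from the divergence structure of the density equation and $\na\cdot u = 0$. Next, exactly as in Step \#1 of the proof of Theorem \ref{thm:S+n_condition}, I would split $n = (n-K)_+ + \min\{n,K\}$ for a cut-off level $K>1$ to be chosen, and use the entropy bound \eqref{T3_entropy_bound} together with the estimate $\eta_K := \|(n-K)_+\|_{L^1} \le \frac{1}{\log K}\int n\log^+ n\,dx \le \frac{C_{L\log L}}{\log K}$ to make $\eta_K$ as small as desired by taking $K$ large. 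The point of the torus (versus $\R^2$) is that $C_{L\log L}$ here is a genuinely uniform-in-time constant by Lemma \ref{lem:nlognbdT2}, so a single $K$ works for all $t$.

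The core step is the differential inequality for $\|(n-K)_+\|_{L^2}^2$. Testing the density equation against $(n-K)_+$, the transport term $u\cdot\na n$ drops out by the divergence-free condition, and the chemotactic term produces, after integration by parts and using $-\de c = n-\overline n$, a cubic contribution controlled as in \eqref{Proof_of_n_L_2_0} by $\frac12\int(n-K)_+^3 + \text{(lower order in } K)$. I would then bound $\int (n-K)_+^3 \le C_{GNS}\,\eta_K\, \|\na(n-K)_+\|_{L^2}^2$ via the Gagliardo–Nirenberg–Sobolev inequality (on $\Torus^2$, $\|f\|_{L^3}^3 \lesssim \|f\|_{L^1}\|\na f\|_{L^2}^2 + \|f\|_{L^1}^3$; the extra $\|f\|_{L^1}^3$ term only adds a harmless constant), choose $K$ so that $C_{GNS}\eta_K \le \tfrac12$, and absorb. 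What remains is
\begin{align*}
\frac12\frac{d}{dt}\|(n-K)_+\|_{L^2}^2 \le -\frac14\|\na(n-K)_+\|_{L^2}^2 + CK\|(n-K)_+\|_{L^2}^2 + CK^2 M,
\end{align*}
and then the Nash-type inequality $\|\na g\|_{L^2}^2 \gtrsim \|g\|_{L^2}^4/\|g\|_{L^1}^2$ (again modulo the compact-domain correction) converts the right-hand side into a dissipative ODE of the form $y' \le -c y^2 + a y + b$, which keeps $y(t) = \|(n-K)_+\|_{L^2}^2$ uniformly bounded for all $t\ge 0$ in terms of $\|n_0\|_{L^2}$, $C_{L\log L}$, $M$. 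Hence $\|n(t)\|_{L^2}$ is uniformly bounded; iterating through $L^4$ (testing against $(n-K)_+^3$, or equivalently running the energy estimate for $n^2$) and then the full Moser–Alikakos iteration of $\|n\|_{L^p}$ over $p = 2^k \to \infty$ upgrades this to the claimed uniform $L^1\cap L^\infty$ bound, with the $L^\infty$ constant depending on $\|n_0\|_{L^1\cap L^\infty}$ and $E_{\Torus^2}[n_0,u_0]$ through $C_{L\log L}$.

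The main obstacle — and the only place the torus differs substantively from $\R^2$ — is verifying that the Gagliardo–Nirenberg–Sobolev and Nash inequalities retain the \emph{scaling-critical} structure on the compact domain, i.e. that the lower-order additive corrections (inevitable since constants are in $\dot H^1(\Torus^2)$ but not controlled by $\|\na\cdot\|_{L^2}$ alone) genuinely contribute only fixed constants, not terms that could destabilize the ODE; this is where one uses that $(n-K)_+$ has small $L^1$ mass $\eta_K$ so that the critical term is the dominant one. The fluid field $u$ plays no role in these $L^p$ estimates beyond being divergence-free, and the fact that $c$ solves the mean-zero-corrected elliptic equation only changes the chemotactic term by the bounded quantity $\overline n = M/|\Torus^2|$, absorbed into the $CK\|(n-K)_+\|_{L^2}^2$ and $CK^2M$ terms. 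The uniform-in-time nature of the conclusion is then automatic because every constant entering the ODE is uniform in $t$, thanks to Lemma \ref{lem:nlognbdT2}. Detailed computations parallel Step \#1 of the proof of Theorem \ref{thm:S+n_condition} and the iteration arguments in \cite{CalvezCarrillo06,Kowalczyk05}, so I would omit them.
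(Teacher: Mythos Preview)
Your proposal is correct and follows essentially the same route as the paper's own proof: the same $(n-K)_+$ truncation with $K$ chosen via the uniform-in-time entropy bound \eqref{T3_entropy_bound}, the same $L^2$ differential inequality closed by the torus Gagliardo--Nirenberg--Sobolev and Nash inequalities (with the additive lower-order corrections handled exactly as you indicate), followed by the $L^4$ step and the Moser--Alikakos iteration. Your remarks about the $\overline n$ correction in $-\Delta c = n - \overline n$ and the compact-domain terms in GNS/Nash match the paper's treatment precisely.
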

The proof is a small variation of classical Patlak-Keller-Segel techniques (see e.g. \cite{JagerLuckhaus92,BlanchetEJDE06}). Before presenting the proof, we recall the following Gagliardo-Nirenberg-Sobolev inequality on $\Torus^d$:
Suppose $v\in H^1(\Torus^d), d\geq 2$, and $\int v dx=0$ . Assume that $q,r>0,\infty>q>r$, and $\frac{1}{d}-\frac{1}{2}+\frac{1}{r}>0$. Then
\begin{align}\label{GNS Td}
||v||_{L^q}\leq C(d,q)||\na v||_{L^2}^{a}||v||_{L^r}^{1-a},\quad \int_{\Torus^d} v dx=0, \quad a=\frac{\frac{1}{r}-\frac{1}{q}}{\frac{1}{d}-\frac{1}{2}+\frac{1}{r}}.
\end{align}
For a fixed $d$, the constant $C(d,q)$ is bounded uniformly when $q$ varies in any compact set in $(0,\infty)$. 
\begin{proof}[Proof of Lemma \ref{lem:L2T}] We focus on the $L^2$ estimate. Let $K>\max\{1,\overline{n}\}$ be a constant, to be chosen later. 
Observe that \eqref{T3_entropy_bound} implies the following:
\begin{align}\label{T3_pick_K}
\int(n-K)_+dx\leq\int_{n>K}ndx\leq\frac{1}{\log(K)}\int_{n>K}n\log^+ (n)dx\leq \frac{C_{L\log L}}{\log(K)}.
\end{align}
Next, via \eqref{pePKS-NS-Torus} and the divergence-free property of the vector field $u$, there holds 
\begin{align}
\frac{1}{2}&\frac{d}{dt}\int (n-K)_+^2dx\nonumber\\
=&\int(n-K)_+[\de n-\na\cdot(n \na c)]dx\nonumber\\
=&-\int |\na((n-K)_+)|^2dx+\frac{1}{2}\int(n-K)^3_+dx+\frac{3K-\overline{n}}{2}\int(n-K)_+^2dx +{K^2-K\overline{n}}\int(n-K)_+dx\nonumber\\
\leq&-\frac{7}{8}\int |\na((n-K)_+)|^2dx+\frac{1}{2}\int(n-K)^3_+dx+\frac{3K-\overline{n}}{2}\int(n-K)_+^2dx +{(K^2-K\overline{n})M}.\label{T3_d_dt_n0-K_2}
\end{align}

We start with the second term in \eqref{T3_d_dt_n0-K_2}. Consider the average-zero function $(n-K)_+-\overline{(n-K)_+}$ on $\Torus^2$, and  we apply the Gagliardo-Nirenberg-Sobolev inequality \eqref{GNS Td}   to deduce: 
\begin{align}
\int |(n-K)_+|^3dx\leq &C\left(\int |(n-K)_+-\overline{(n-K)_+}|^3 dx+\overline{(n-K)_+}^3\right) \\
\leq& C_{GNS}\int|\na(n-K)_+|^2dx\int (n-K)_+dx+CM^3.
\end{align}
From \eqref{T3_pick_K}, we choose $K$ depending only on $C_{L \log L}$ such that
\begin{align}\label{T3_d_dt_n0-K_2:1}
-\frac{7}{8}\int |\na((n-K)_+)|^2dx+\frac{1}{2}\int(n-K)^3_+dx\leq -\frac{1}{2}\int |\na((n-K)_+)|^2dx+CM^3.
\end{align}
Plugging \eqref{T3_d_dt_n0-K_2:1} into \eqref{T3_d_dt_n0-K_2} yields the following for some universal constant $B >0$, 
\begin{align}
\frac{1}{2}\frac{d}{dt}\int (n-K)_+^2dx
\leq & -\frac{1}{2}\int |\na((n-K)_+)|^2dx+{K B}{}\int(n-K)_+^2dx+{B K^2M}.\label{T3_d_dt_n0-K_2:2.5}
\end{align}
Recalling the Gagliardo-Nirenberg-Sobolev inequality  \eqref{GNS Td}, for a general function $v$ on $\Torus^2$,  the following Nash inequality holds
\be
||v||_{L^2(\Torus^2)}^2\leq C_N ||\na v||_{L^2(\Torus^2)}||v||_{L^1(\Torus^2)}+ C_N||v||_{L^1(\Torus^2)}^2.
\ee
Applying the Nash inequality in the estimate \eqref{T3_d_dt_n0-K_2:2.5} yields
\begin{align}\label{T3_d_dt_n0-K_2:3}
\frac{1}{2}\frac{d}{dt}|| (n-K)_+||_2^2\leq -\frac{1}{2B}\frac{||(n-K)_+||_2^4}{C_NM^2}+\frac{3KB}{2}||(n-K)_+||_2^2  +BK^2M.
\end{align}
Further note that
\begin{align}
\norm{n}_{L^2} & \leq\norm{(n - K)_+}_{L^2}+\norm{\min \{n, K\}}_{L^2}\leq \norm{(n - K)_+}_{L^2} + K^{1/2} M^{1/2}.\label{ineq:layercake}&  
\end{align}
The inequality \eqref{T3_L2_bound_of_n0} hence follows. 
Same as in the proof of Theorem \ref{thm:S+n_condition}, we apply energy estimates to derive the $L^4$-bound on the density $n$, which in turn implies the chemical gradient $||\na c||_\infty$ estimate through Morrey's inequality and the Calderon-Zygmund inequality. Now application of the Moser-Alikakos iteration yields that 
\begin{align}
||n(t)||_{L^\infty}\leq C_{1,\infty}(||n_0||_{L^1\cap L^\infty}, E_{\mathbb{T}^2}[n_0,u_0])<\infty,\quad \forall t\in[0,\infty).
\end{align}
This concludes the proof of the theorem.
\end{proof}
Next, we prove the higher regularity estimates using \eqref{T3_L2_bound_of_n0}.
\begin{lem}\label{Lemma:T_3_H^1_control_of_n}Consider the solution to the equation \eqref{pePKS-NS-Torus}, the following $H^s,\,2\leq s\in \mathbb{N}$ estimates hold on $[0,\infty)$:
\begin{align}\label{H^s control of n}
||n(t)||_{H^s(\Torus^2)}+|| u(t)||_{H^s(\mathbb{T}^2)} \leq C_{{H}^s}(||n_0||_{H^s},||u_0||_{H^s},\CC(||n_0||_{L^1\cap L^\infty},E_{\mathbb{T}^2}[n_0,u_0]))<\infty,\,\forall t\in[0,\infty).
\end{align}
\end{lem}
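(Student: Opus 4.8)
The plan is to run the same energy-estimate machinery as in the second step of the proof of Theorem \ref{thm:S+n_condition}, adapting it to the torus and arranging the differential inequalities so that they yield uniform-in-time bounds rather than merely finite-time ones. First I would record the chemical estimates: on $\Torus^2$ one has $c = (-\de)^{-1}(n-\overline n)$, so the Calderon-Zygmund inequality together with the bound \eqref{T3_L2_bound_of_n0} gives $\norm{\na^2 c}_{L^2} \le C\norm{n}_{L^2} \le C\CC$ and $\norm{\na^2 c}_{L^4} \le C\norm{n}_{L^4} \le C\CC$, while Morrey's inequality applied to $\na c$ via the $L^3$ bound on $n$ gives $\norm{\na c}_{L^\infty} \le C\CC$; all of these are uniform in $t$. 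As on the plane, one first upgrades \eqref{T3_L2_bound_of_n0} to a uniform $L^4$ bound for $n$ by an $L^4$ energy estimate, using the torus Nash inequality with its extra $\norm{v}_{L^1}^2$ term, before invoking Morrey.

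Next I would close the first-order bounds. For $u$, applying the Leray projection as in \eqref{eqn:flow}, differentiating $\tfrac12\norm{\na u}_{L^2}^2$, using the cancellation \eqref{B_identity} and the self-adjointness of $\mathbb P$, and estimating the forcing by $C\norm{\na^2 u}_{L^2}\norm{n}_{L^2}\norm{\na c}_{L^\infty}$, one obtains after the interpolation $\norm{\na^2 u}_{L^2}^2 \ge \norm{\na u}_{L^2}^4/\norm{u}_{L^2}^2$ and Young's inequality a differential inequality of the shape $\tfrac{d}{dt}\norm{\na u}_{L^2}^2 \le -C^{-1}\norm{\na u}_{L^2}^4/\norm{u}_{L^2}^2 + C\CC^4$; since $\norm{u(t)}_{L^2}$ is uniformly bounded by Lemma \ref{lem:nlognbdT2} (and $u$ has zero mean, so the Poincar\'e inequality keeps the quotient harmless), this forces $\norm{\na u(t)}_{L^2}$ to be uniformly bounded. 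Then the analogous computation for $\tfrac12\tfrac{d}{dt}\norm{\na n}_{L^2}^2$, controlling the transport term $\norm{\na n}_{L^4}^2\norm{\na u}_{L^2}$, the drift term $\norm{\na^2 n}_{L^2}\norm{\na n}_{L^2}\norm{\na c}_{L^\infty}$, and $\norm{\na^2 n}_{L^2}\norm{n}_{L^4}\norm{\na^2 c}_{L^4}$ against $\tfrac12\norm{\na^2 n}_{L^2}^2$ via Gagliardo-Nirenberg-Sobolev and Young, closes a uniform-in-time bound on $\norm{\na n(t)}_{L^2}$ using the uniform $L^2$ bound on $n$.

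Finally I would iterate on $s$: assuming the $H^{s-1}$ bound, I differentiate $\tfrac12(\norm{n}_{\dot H^s}^2 + \norm{u}_{\dot H^s}^2)$ and decompose the nonlinearities exactly as in \eqref{In_IIn}--\eqref{Iu+IIu_conclusion}; the top-order transport pieces vanish by $\na\cdot u = 0$, and the remaining commutator terms are handled by the same Gagliardo-Nirenberg interpolation estimates and Young's inequality, absorbing $\tfrac14\norm{n}_{\dot H^{s+1}}^2 + \tfrac14\norm{u}_{\dot H^{s+1}}^2$ into the dissipation. The key for uniform control is then to bound the dissipation from below by $-\norm{f}_{\dot H^{s+1}}^2 \le -\norm{f}_{\dot H^s}^{2+2/s}/(C_{GNS}\norm{f}_{L^2}^{2/s})$ with $f = n$ and $f = u$, using the uniform $L^2$ bounds from \eqref{T3_L2_bound_of_n0} and Lemma \ref{lem:nlognbdT2}; this produces a scalar differential inequality $y' \le -C^{-1}y^{1+1/s} + Cy + C$ with $t$-independent constants, whose solutions are uniformly bounded, which gives \eqref{H^s control of n}.

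The step I expect to be the main obstacle is the bookkeeping of the functional inequalities on $\Torus^2$: Nash and Gagliardo-Nirenberg-Sobolev there carry extra lower-order terms and/or mean-zero hypotheses, so every application must be split between the genuinely mean-zero quantities ($\na^k n$, $\na^k u$, $n-\overline n$) and the non-mean-zero density $n$, whose mass $M$ has to be carried along explicitly; verifying that every constant entering the final differential inequalities is genuinely time-independent, so that the conclusion is uniform in time, is the delicate part.
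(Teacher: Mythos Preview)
Your proposal is correct and follows essentially the same route as the paper's own proof: record the chemical estimates via Calderon--Zygmund and Morrey, close the $\dot H^1$ bound on $u$ using the cancellation \eqref{B_identity} and the interpolation $\|\na^2 u\|_2^2 \ge \|\na u\|_2^4/\|u-\overline u\|_2^2$, close the $\dot H^1$ bound on $n$ in the same way, and then iterate. One small redundancy: since \eqref{T3_L2_bound_of_n0} already gives the uniform $L^1\cap L^\infty$ bound, the separate $L^4$ upgrade you mention is unnecessary---all $L^p$ bounds for $n$ are immediate by interpolation.
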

\begin{proof}
Before proving the lemma, we collect the inequalities we are going to apply. 
The $L^4$ boundedness of the Riesz transform on $\Torus^d$ (see, e.g., \cite{SteinWeiss} Chapter VII section 3) yields that
\begin{align}
||\na^2 c||_2=||\na^2(-\de)(n-\overline{n})||_2\leq C||n-\overline{n}||_2;\\
||\na^2 c||_4=||\na^2(-\de)(n-\overline{n})||_4\leq C||n-\overline{n}||_4.\label{na2c}
\end{align} 
Combining the Morrey's inequality and the Calderon-Zygmund inequality yields that 
\begin{align}\label{nac}
||\na c||_{L^\infty(\mathbb{T}^2)}\leq&C \norm{n-\overline{n}}_{L^3(\Torus^2)}.
\end{align}

Now we estimate the time evolution of $\dot H^1$ norm of the velocity $u$ with the identity \eqref{B_identity},  Gagliardo-Nirenberg-Sobolev inequality, the chemical gradient estimates \eqref{nac}, \eqref{na2c}, and the $L^p,\, p\geq 1$ controls of the cell density $n $ \eqref{T3_L2_bound_of_n0} as follows:
\begin{align*}
\frac{1}{2}\frac{d}{dt}||\na u||_2^2\leq &- ||\na^2 u||_2^2+  ||\na^2 u||_2|| n||_2||\na c||_\infty \\
\leq&-\frac{||\na u||_2^4}{4C_{GNS}||u-\overline{u}||_2^2}+C||n||_{L^1\cap L^\infty}^4.
\end{align*}
Combining the estimates \eqref{T3_entropy_bound}, \eqref{T3_L2_bound_of_n0} yields that 
\begin{align}\label{na_u_L2_T2}
||\na u(t)||_{L^2}\leq C_{  u;H^1}(C_{L\log L;L^2}, \CC, ||\na u_0||_2), \quad\forall t\in [0,\infty)
\end{align}
Similarly, we can estimate the time evolution of the $\dot H^1$ norm of $n$ with estimates \eqref{T3_L2_bound_of_n0}, \eqref{nac}, \eqref{na_u_L2_T2} as follows:
\begin{align*}
\frac{1}{2}\frac{d}{dt}||\na n||_2^2\leq&-\frac{1}{2}{||\na ^2 n||_2^2}+||\na n||_4^2||\na u||_2+||\na^2 n||_2||\na n||_2||\na c||_\infty+||\na^2n||_2|| n||_4||\na^2c||_4\\
\leq&-\frac{1}{2}{||\na ^2 n||_2^2}+C||\na^2 n||_2||\na n||_2||\na u||_2+ ||\na^2 n||_2||\na n||_2||\na c||_\infty+C||\na^2n||_2|| n||_4|| n-\overline{n}||_4 \\
\leq&-\frac{1}{2}{||\na ^2 n||_2^2}+\frac{1}{4}||\na^2 n||_2^2+C\left(||\na u||_2^2+ ||\na c||_\infty^2+||n-\overline {n}||_2^2\right)||\na n||_2^2+C\overline{n}^4\\
\leq&-\frac{||\na n||_2^4}{4C_{GNS} ||n-\overline{n}||_2^2}+C\left(C_ {u;H^1}^2+\CC^2 \right)||\na n||_2^2+CM^4.
\end{align*}
Combining the solution to the above differential inequality and \eqref{T3_L2_bound_of_n0}, \eqref{nac}, \eqref{na_u_L2_T2}, we see that
\begin{align*}
||\na n(t)||_2^2+||\na u(t)||_2^2\leq&C_{H^1}(||n_0||_{H^1},||u_0||_{H^1},||n_0||_{L^1\cap L^\infty}, E_{\mathbb{T}^2}[n_0,u_0])<\infty,\quad \forall t\in[0,\infty).
\end{align*}
Further iterate this argument yields the $H^s,\, s\geq 3$ bound. This finishes the proof of Lemma \ref{Lemma:T_3_H^1_control_of_n}.
\end{proof}

\subsubsection*{Acknowledgments}
The authors would like to thank Alexander Kiselev for many helpful discussions and careful reading of the first version of the draft. The second author would also like to thank Jian-Guo Liu for referring us to the paper \cite{KozonoMiuraSugiyama} and helpful discussions. 
{The authors would also like to thank the anonymous referees for pointing out that the $8\pi$-mass threshold is actually sharp on $\rr^2$, which greatly improves the result.} Both of the authors got partial support from the NSF-DMS grant 1848790. The second author got partial support from NSF grant RNMS11-07444 (KINet).  

\appendix
\section{Appendix}
\begin{proof}[Proof of Theorem \ref{thm:local_well_posedness}] We prove the local \emph{a-priori estimates} of the $H^s, \, s\geq 3$ norms of the velocity field $u$ and the density $n$ assuming that the solution is smooth. These bounds can be justified through standard approximation procedure. Since the approximation step is classical, we  refer the readers to Chapter 6 and 7 of \cite{RobinsonRodrigoSadowski16} for further details. 

We first derive the $L^2$ estimate of the density $n$. Recall the equation of the density
\begin{equation}\label{eqn:density1}
    \partial_t n+u \cdot \nabla n +\nabla \cdot (\nabla c n)=\Delta n.
\end{equation}
We multiply the equation \eqref{eqn:density1} by $n$ and integrate to obtain:
\begin{equation}
    \frac{1}{2}\frac{d}{dt} \int n^2dx +\int u \cdot \nabla\left(\frac{n^2}{2}\right)dx+ \int n \nabla \cdot (\nabla c n)dx  =- \int |\nabla n |^2dx.\label{Lp_test} 
\end{equation} 
Since $u$ is divergence-free, the second term on the left hand side of \eqref{Lp_test} is zero. For the third term on the left hand side of \eqref{Lp_test}, direct integration by parts yields that 
\begin{align*}
\begin{split}
    \int n  \nabla \cdot (\nabla c n)dx 
    =-\int n^{3}dx+\int \nabla c\cdot \nabla\left(\frac{n^2}{2}\right)dx=-\frac{1}{2}\int n^{3}dx.
\end{split}
\end{align*}
Combining this equation with \eqref{Lp_test}, and applying Gagliardo-Nirenberg-Sobolev inequality $||f||_3\leq C_{GNS}||f||^{\frac{2}{3}}_2 ||\nabla f||^{\frac{1}{3}}_2$ with $f=n $ and Young's inequality yield that
\begin{align}
\begin{split}\label{eqn:diff_inequ_n}
\frac{d}{dt}||n||_2^2 &\leq  - 2\int |\nabla n |^2dx+\int n^{3}dx \leq- 2||\nabla n ||^{2}_2+ C_{GNS}||\nabla n || _2  ||n||_2^2 \\
    &\leq- \frac{3}{2} ||\nabla  n ||_2^2 +C_{GNS}||n||_2^{ {4} }\leq C_{GNS}||n||_2^{4}.
\end{split}
\end{align}
By ordinary differential equation theory, we obtain that  there exists a small constant $\epsilon_2=\ep_2(C_{GNS},||n_{0}||_2)$ such that for time $t$ smaller than $\ep_2$, i.e., $0\leq t<\epsilon_2$, the $L^2$ norm of the solution $n$ is bounded:
\begin{equation}\label{eqn:local_control_n}
||n(t)||_2^2\leq2||n_0||_2^2,\quad \forall t\in[0,\ep_2].
\end{equation}
Once the $L^2$ bound of the density is achieved, we can estimate the $L^4$ norm of the chemical gradient $\na c$ on a short time interval $t\in[0, \ep_2]$. Applying the Hardy-Littlewood-Sobolev inequality, H\"older inequality and Young inequality, we estimate the chemical gradient $ \nabla c $ as follows 
\begin{align}
||\na c(t)||_{L^4}\leq C_{HLS}||n(t)||_{L^{4/3}}\leq C_{HLS}(M+||n(t)||_{L^2})\leq C(M, ||n_0||_2),\quad \forall t\in[0,\ep_2].    \label{chemical_gradient_estimate}
\end{align} 

Next we estimate the $L^2$ norm of the fluid velocity fields $u$. Recall the fluid equation after we apply the Leray projection operator $\mathbb{P}$:
\begin{equation}\label{eqn:flow_eqn}
    \partial_t u+\mathbb{P}((u\cdot \nabla)u)=-\mathbb{P}(-\Delta)u+\mathbb{P}(n\nabla c).
\end{equation}
Since the Leray projection is self-adjoint and the vector field $u$ is divergence free, multiplying \eqref{eqn:flow_eqn} by $u$ and integrating yields the following equality
\begin{equation}
\frac{d}{dt} \frac{1}{2}\int |u|^2dx +\int u\cdot ((u\cdot \nabla)u)dx=\int u  \Delta u dx+\int u \cdot(n\nabla c)dx.\label{L2_test}
\end{equation}
Due to the divergence-free property of the vector field $ u$, we have that the second term on the left hand side of \eqref{L2_test} vanishes. 
Combining the estimates \eqref{eqn:local_control_n}, \eqref{chemical_gradient_estimate}, the H\"older inequality and the Gagliardo-Nirenberg-Sobolev inequality yields  that the second term on the right hand side of \eqref{L2_test} is bounded on the time interval $[0,\ep_2]$,  
\begin{equation*}
\int u(t,x)\cdot (n(t,x)\nabla c(t,x))dx 
\leq ||\nabla c(t)||_4 ||u(t)||_4||n(t)||_2\leq C(M ,||n_0||_{L^2 }) ||u(t)||_2^{1/2}||\na u(t)||_2^{1/2},\quad \forall t\in[0,\ep_2].
\end{equation*}
Combining these estimates with \eqref{L2_test}, we apply the Young's inequality to obtain that 
\begin{equation}\label{eqn:diff_inequ_u}
    \frac{d}{dt}||u(t)||_2^2\leq -\frac{1}{2}||\na u||_2^2+ C(M, ||n_0||_{L^2 }) ||u(t)||_2^{2/3},\quad \forall t\in[0,\ep_2].
\end{equation}
Therefore, we have the following local control over $||u(t)||_2$
 \begin{align} 
 ||u(t)||_2\leq C(M,||n_0||_{L^2 }, ||u_0||_2)<\infty,\quad \forall t\in [0,\epsilon_2].
 \end{align} 
Next we estimate the $H^1$-norm. Before estimating the $\dot H^1$ norms, we recall estimates \eqref{na2c_R2}, 
\begin{align}
||\na^2 c||_2=||\na^2(-\de)n||_2\leq C ||n||_2,\quad
||\na^2 c||_4=||\na^2(-\de)n||_4\leq C||n||_4.\label{na2c_R2_appendix}
\end{align} Now we estimate the time evolution of the $\dot H^1$ seminorm of the velocity $u$ \eqref{eqn:flow} with the divergence-free condition of $u$, the self-adjoint property of $\mathbb{P}$,  the Gagliardo-Nirenberg-Sobolev inequality,  and the chemical gradient estimates \eqref{chemical_gradient_estimate}, \eqref{na2c_R2_appendix} as follows:
\begin{align*}
\frac{1}{2}\frac{d}{dt}\sum_{j=1}^2||\pa_j u||_2^2=&-\sum_{j=1}^2\int |\na \pa_j u|^2 dx-\sum_{j=1}^2\int \pa_j B(u,u) \cdot \pa_j u dx+\sum_{j=1}^2\int \pa_j \mathbb{P}(\na c n)\cdot \pa_j u dx\\
\leq &-\frac{1}{2}||\na^2 u||_2^2+C\left(||\na u||_2^4+||  n||_4^2||\na c||_4^2 \right)\\ 
\leq&C||\na u||_2^4 +C||\na n||_2||n||_2(M^2+ ||n||_2^2).
\end{align*}
Similarly, we estimate the time evolution of the $\dot H^1$ seminorm of $n$ using the divergence free property of $u$,  the Gagliardo-Nirenberg-Sobolev inequality, and  the chemical gradient estimate \eqref{chemical_gradient_estimate}, \eqref{na2c_R2_appendix}  as follows:
\begin{align*}
\frac{1}{2}\frac{d}{dt}||\na n||_2^2\leq&-\frac{1}{2}{||\na ^2 n||_2^2}+||\na n||_4^2||\na u||_2+||\na^2 n||_2||\na n||_4||\na c||_4+||\na^2n||_2|| n||_4||\na^2c||_4\\
\leq&-\frac{1}{2}{||\na ^2 n||_2^2}+C||\na^2 n||_2||\na n||_2||\na u||_2+C||\na^2 n||_2^{3/2}||\na n||_2^{1/2}(M+||n ||_2)+C||\na^2n||_2|| n||_2||\na n||_2 \\
\leq&-\frac{1}{4}||\na^2 n||_2^2+C||\na n||_2^4+C||\na u||_2^4+C (1+M+||n||_2)^4  ||\na n||_2^2.
\end{align*}Combining these estimations on time evolution of $||n||_{\dot H^1}^2$ and $ ||u||_{\dot H^1}^2$
with the $L^2$ bound on the cell density $n$ \eqref{C2infty} and the assumption on the fluid velocity $u$ \eqref{Energy_u_condition} yields that there exists a universal constant $C$ such that 
\begin{align*}
\frac{1}{2}\frac{d}{dt}
(||\na n||_2^2+||\na u||_2^2)\leq&C||\na u||_2^4+C||\na n||_2^4 + C(M,||n||_2) ||\na n||_2^2+C (|n||_2,M).
\end{align*}
Now by standard ODE theory and the $L^2$-bound \eqref{eqn:local_control_n}, we obtain that
\begin{align}\label{H_1_bound_appendix}
||\na n(t)||_2+||\na u(t)||_2\leq&C_{H^1}(
 ||n_0||_{L^1},  ||n_0||_{H^1},||u_0||_{H^1})<\infty,\quad \forall t\in[0,\ep],
 \end{align} 
for some small enough $\ep=\ep(||n_0||_{L^1},
 ||n_0||_{H^1},||u_0||_{H^1})$.

Now we can apply the similar procedure showed in the proof of Theorem \ref{thm:S+n_condition} to gain control over $H^s$ norms of $u$ and $n$ on the interval $[0,\ep]$. Following the arguments in Chapter 7 of \cite{RobinsonRodrigoSadowski16}, higher space-time regularity of the solutions can be obtained.  This concludes the proof of the theorem. 
\end{proof}

\begin{proof}[Proof of Lemma \ref{lem: Free_Energy_T2}]
Let $x \in \Torus^2$ be fixed. Define the cut-off function $\varphi_x(y)\in C^\infty$ such that
\be\ba
supp(\varphi_x)=&B(x,1/4),\\
\varphi_x(y)\equiv& 1,\forall y\in B(x,1/8),\\
supp(\na \varphi_x(y))\subset& \overline{B}(x,1/4)\backslash B(x,1/8).
\ea\ee
By extending $n(y)$ and $c(y)$ periodically to $\rr^2$, we can rewrite the equation $-\de c=n-\overline{n}$ on $\mathbb{T}^2$ such that it is posed on $\rr^2$:
\be
-\de_y (\varphi_x(y)c(y))=(n(y)-\overline{n})\varphi_x(y)-2\na_y \varphi_x(y)\cdot\na_y c(y)-\de_y\varphi_x(y)c(y).
\ee
Using the fundamental solution of the Laplacian on $\rr^2$:
\be\ba
c(x)=&c(x)\varphi_x(x)\\
=&-\frac{1}{2\pi}\int_{\rr^2}\log|x-y|\bigg((n(y)-\overline{n})\varphi_x(y)-2\na_y \varphi_x(y)\cdot\na_y c(y)-\de_y\varphi_x(y)c(y)\bigg)dy\\
=&-\frac{1}{2\pi}\int_{|x-y|\leq \frac{1}{4}}\log|x-y|(n(y)-\overline{n})\varphi_x(y)dy-\frac{1}{\pi}\int_{|x-y|\leq \frac{1}{4}}\na_y\cdot(\log|x-y|\na_y \varphi_x(y)) c(y)dy\\
&+\frac{1}{2\pi}\int_{|x-y|\leq \frac{1}{4}}\log|x-y|\de_y\varphi_x(y)c(y)dy.
\ea\ee
Due to the support of $\varphi_x$, we can identify the above with an analogous integral on $\mathbb{T}^2$ with $\abs{x-y}$ replaced by $d(x,y)$.
Therefore, we have the following estimate on the interaction energy,
\begin{align*}
-&\frac{1}{2}\int\displaylimits_{\mathbb{T}^2}(n(x)-\overline{n})c(x) dx\\
=&\frac{1}{4\pi}\iint\displaylimits_{\substack{\mathbb{T}^2\times\mathbb{T}^2\\d(x,y)\leq \frac{1}{4}}}\log d(x,y)(n(x)-\overline{n})(n(y)-\overline{n})\varphi_x(y)dydx+\frac{1}{2\pi}\iint\displaylimits_{\substack{\mathbb{T}^2\times\mathbb{T}^2\\ \frac{1}{8}\leq d(x,y)\leq \frac{1}{4}}}(n(x)-\overline{n})\na_y\cdot(\log d(x,y)\na_y \varphi_x(y)) c(y)dydx\\
&-\frac{1}{4\pi}\iint\displaylimits_{\substack{\mathbb{T}^2\times\mathbb{T}^2\\\frac{1}{8}\leq d(x,y)\leq \frac{1}{4}}}(n(x)-\overline{n})\log d(x,y)\de_y\varphi_x(y)c(y)dydx\\
=&\frac{1}{4\pi}\iint\displaylimits_{d(x,y)\leq \frac{1}{8}}\log d(x,y)(n(x)-\overline{n})(n(y)-\overline{n})dydx+\frac{1}{4\pi}\iint\displaylimits_{\frac{1}{8}\leq d(x,y)\leq \frac{1}{4}}\log d(x,y)(n(x)-\overline{n})(n(y)-\overline{n})\varphi_x(y)dydx\\
&+\frac{1}{2\pi}\iint\displaylimits_{\frac{1}{8}\leq d(x,y)\leq \frac{1}{4}}(n(x)-\overline{n})\na_y\cdot(\log d(x,y)\na_y \varphi_x(y)) c(y)dydx-\frac{1}{4\pi}\iint\displaylimits_{\frac{1}{8}\leq d(x,y)\leq \frac{1}{4}}(n(x)-\overline{n})\log d(x,y)\de_y\varphi_x(y)c(y)dydx\\
=&\frac{1}{4\pi}\iint\displaylimits_{\mathbb{T}^2\times\mathbb{T}^2}\log d(x,y)n(x)n(y)dydx-\frac{1}{4\pi}\iint\displaylimits_{d(x,y)> \frac{1}{8}}\log d(x,y)n(x)n(y)dydx\\
&-\frac{1}{2\pi}\overline{n}\iint\displaylimits_{d(x,y)\leq \frac{1}{8}}\log d(x,y)n(x)dydx+\frac{1}{4\pi}\overline{n}^2\iint\displaylimits_{d(x,y)\leq \frac{1}{8}}\log d(x,y)dydx
\end{align*}
\begin{align*}
&+\frac{1}{4\pi}\iint\displaylimits_{\frac{1}{8}\leq d(x,y)\leq \frac{1}{4}}\log d(x,y)(n(x)-\overline{n})(n(y)-\overline{n})\varphi_x(y)dydx\\
&+\frac{1}{2\pi}\iint\displaylimits_{\frac{1}{8}\leq d(x,y)\leq \frac{1}{4}}(n(x)-\overline{n})\na_y\cdot(\log d(x,y)\na_y \varphi_x(y)) c(y)dydx-\frac{1}{4\pi}\iint\displaylimits_{\frac{1}{8}\leq d(x,y)\leq \frac{1}{4}}(n(x)-\overline{n})\log d(x,y)\de_y\varphi_x(y)c(y)dydx.
\end{align*}
The 2nd, 3rd, 4th, 5th terms in the last line are bounded below by $-BM^2$ for some constant $B > 0$.
The 6th and 7th terms are bounded below by $-B M ||c||_{L^1}$ for some constant $B > 0$, using the fact that $\na_y\cdot(\log|x-y|\na_y \varphi_x(y))$ and $\log|x-y|\de_y\varphi_x(y)$ are bounded in the region $\frac{1}{8}\leq|x-y|\leq \frac{1}{4}$.
Denoting $K(y)$ to be the fundamental solution of the Laplacian on $\mathbb{T}^2$, by Young's inequality, we have
\be\ba
||c||_{L^1(\mathbb{T}^2)}=||K\ast (n-\overline{n})||_{L^{1}(\mathbb{T}^2)}\leq||K||_{L^1(\mathbb{T}^2)}||n-\overline{n}||_{L^{1}(\mathbb{T}^2)}\leq BM.
\ea\ee
\end{proof}
\def\cprime{$'$}


\begin{thebibliography}{10}

\bibitem{BedrossianHe16}
J.~Bedrossian and S.~He.
\newblock Suppression of blow-up in {Patlak-Keller-Segel} via shear flows.
\newblock {\em SIAM Journal on Mathematical Analysis}, 50(6):6365--6372, 2018.

\bibitem{Biler06}
P.~Biler, G.~Karch, P.~Lauren\c{c}ot, and T.~Nadzieja.
\newblock The $8\pi$-problem for radially symmetric solutions of a chemotaxis
  model in the plane.
\newblock {\em Math. Meth. Appl. Sci}, 29:1563--1583, 2006.

\bibitem{BlanchetCarrilloMasmoudi08}
A.~Blanchet, J.~Carrillo, and N.~Masmoudi.
\newblock Infinite time aggregation for the critical {Patlak-Keller-Segel}
  model in $\mathbb{R}^2$.
\newblock {\em Comm. Pure Appl. Math.}, 61:1449--1481, 2008.

\bibitem{BlanchetEJDE06}
A.~Blanchet, J.~Dolbeault, and B.~Perthame.
\newblock Two-dimensional {Keller-Segel} model: Optimal critical mass and
  qualitative properties of the solutions.
\newblock {\em E. J. Diff. Eqn}, 2006(44):1--32, 2006.

\bibitem{CalvezCarrillo06}
V.~Calvez and J.~Carrillo.
\newblock Volume effects in the {Keller-Segel} model: energy estimates
  preventing blow-up.
\newblock {\em J. Math. Pures Appl.}, 86:155--175, 2006.

\bibitem{CarlenLoss92}
E.~Carlen and M.~Loss.
\newblock Competing symmetries, the logarithmic {HLS} inequality and {Onofri's}
  inequality on {$\mathbb S^n$}.
\newblock {\em Geom. Func. Anal.}, 2(1):90--104, 1992.

\bibitem{ChaeKangLee13}
M.~Chae, K.~Kang, and J.~Lee.
\newblock Existence of smooth solutions to coupled chemotaxis-fluid equations.
\newblock {\em Discrete Contin. Dyn. Syst.}, 33(6):2271--2297, 2013.

\bibitem{ConstantinIgnatova19}
P.~Constantin and M.~Ignatova.
\newblock On the {N}ernst-{P}lanck-{N}avier-{S}tokes system.
\newblock {\em Arch. Ration. Mech. Anal.}, 232(3):1379--1428, 2019.

\bibitem{DuanLorzMarkowich10}
R.-J. Duan, A.~Lorz, and P.~Markowich.
\newblock Global solutions to the coupled chemotaxis-fluid equations.
\newblock {\em Comm. Partial Differential Equations, Vol. 35}, pages
  1635--1673, 2010.

\bibitem{Evans}
L.~Evans.
\newblock {\em Partial Differential Equations}, volume~19 of {\em Grad. Stud.
  Math.}
\newblock American Mathematical Society, 1998.

\bibitem{FrancescoLorzMarkowich10}
M.~D. Francesco, A.~Lorz, and P.~Markowich.
\newblock {Chemotaxis}-fluid coupled model for swimming bacteria with nonlinear
  diffusion: global existence and asymptotic behavior.
\newblock {\em Discrete Contin. Dyn. Syst. Ser. A}, 28:1437--1453, 2010.

\bibitem{He}
S.~He.
\newblock Suppression of blow-up in parabolic-parabolic {Patlak-Keller-Segel}
  via strictly monotone shear flows.
\newblock {\em Nonlinearity}, 31(8):3651--3688, 2018.

\bibitem{HeTadmor172}
S.~He and E.~Tadmor.
\newblock Suppressing chemotactic blow-up through a fast splitting scenario on
  the plane.
\newblock {\em Arch. Ration. Mech. Anal.}, 232(2):951--986, 2019.

\bibitem{Hortsmann}
D.~Horstmann.
\newblock {From 1970 until present: the Keller-Segel model in chemotaxis and
  its consequences}.
\newblock {\em I, Jahresber. Deutsch. Math.-Verein}, 105(3):103--165, 2003.

\bibitem{IyerXuZlatos}
G.~Iyer, X.~Xu, and A.~Zlatos.
\newblock Convection-induced singularity suppression in the {Keller-S}egel and
  other non-linear {PDE}s.
\newblock {\em arXiv:1908.01941}.

\bibitem{JagerLuckhaus92}
W.~J\"ager and S.~Luckhaus.
\newblock On explosions of solutions to a system of partial differential
  equations modelling chemotaxis.
\newblock {\em Trans. Amer. Math. Soc.}, 329(2):819--824, 1992.

\bibitem{KS}
E.~F. Keller and L.~Segel.
\newblock Model for chemotaxis.
\newblock {\em J. Theor. Biol.}, 30:225--234, 1971.

\bibitem{KiselevXu15}
A.~Kiselev and X.~Xu.
\newblock Suppression of chemotactic explosion by mixing.
\newblock {\em Arch. Ration. Mech. Anal.}, 222(2):1077--1112, 2016.

\bibitem{Kowalczyk05}
R.~Kowalczyk.
\newblock Preventing blow-up in a chemotaxis model.
\newblock {\em J. Math. Anal. Appl.}, 305:566--588, 2005.

\bibitem{KozonoMiuraSugiyama}
H.~Kozono, M.~Miura, and Y.~Sugiyama.
\newblock Time global existence and finite time blow-up criterion for solutions
  to the {K}eller-{S}egel system coupled with the {N}avier-{S}tokes fluid.
\newblock {\em J. Differential Equations}, 267(9):5410--5492, 2019.

\bibitem{KuksinShirikyan12}
S.~Kuksin and A.~Shirikyan.
\newblock {\em Mathematics of two-dimensional turbulence}, volume 194 of {\em
  Cambridge Tracts in Mathematics}.
\newblock Cambridge University Press, Cambridge, 2012.

\bibitem{LiuLorz11}
J.-G. Liu and A.~Lorz.
\newblock A coupled chemotaxis-fluid model: global existence.
\newblock {\em Annales de l’Institut Henri Poincar\'{e}. Analyse Non
  Lin\'{e}aire, Vol. 28}, pages 643--652, 2011.

\bibitem{Lorz10}
A.~Lorz.
\newblock Coupled chemotaxis fluid model.
\newblock {\em Math. Models Methods Appl. Sci., Vol. 20}, 2010.

\bibitem{Lorz12}
A.~Lorz.
\newblock A coupled {Keller-Segel-Stokes} model: global existence for small
  initial data and blow-up delay.
\newblock {\em Communications in Mathematical Sciences, Vol. 10}, pages
  555--574, 2012.

\bibitem{Patlak}
C.~S. Patlak.
\newblock Random walk with persistence and external bias.
\newblock {\em Bull. Math. Biophys.}, 15:311--338, 1953.

\bibitem{RobinsonRodrigoSadowski16}
J.~C. Robinson, J.~L. Rodrigo, and W.~Sadowski.
\newblock {\em The three-dimensional {N}avier-{S}tokes equations}, volume 157
  of {\em Cambridge Studies in Advanced Mathematics}.
\newblock Cambridge University Press, Cambridge, 2016.
\newblock Classical theory.

\bibitem{SW}
I.~Shafrir and G.~Wolansky.
\newblock The logarithmic {HLS} inequality for systems on compact manifolds.
\newblock {\em J. Func. Anal.}, 227:200--226, 2006.

\bibitem{SteinWeiss}
E.~Stein and G.~Weiss.
\newblock {\em {Introduction to Fourier Analysis on Euclidean Spaces}}.
\newblock Princeton University Press, 1971.

\bibitem{TaoWinkler}
Y.~Tao and M.~Winkler.
\newblock Locally bounded global solutions in a three-dimensional
  chemotaxis-{S}tokes system with nonlinear diffusion.
\newblock {\em Ann. Inst. H. Poincar\'{e} Anal. Non Lin\'{e}aire},
  30(1):157--178, 2013.

\bibitem{Tuval05}
I.~Tuval, L.~Cisneros, C.~Dombrowski, C.~W. Wolgemuth, J.~O. Kessler, and R.~E.
  Goldstein.
\newblock Bacterial swimming and oxygen transport near contact lines.
\newblock {\em Proceedings of the National Academy of Sciences},
  102(7):2277--2282, 2005.

\bibitem{Winkler12}
M.~Winkler.
\newblock Global large-data solutions in a chemotaxis-({N}avier-){S}tokes
  system modeling cellular swimming in fluid drops.
\newblock {\em Comm. Partial Differential Equations}, 37(2):319--351, 2012.

\bibitem{Winkler14}
M.~Winkler.
\newblock Stabilization in a two-dimensional chemotaxis-{N}avier-{S}tokes
  system.
\newblock {\em Arch. Ration. Mech. Anal.}, 211(2):455--487, 2014.

\bibitem{Winkler16}
M.~Winkler.
\newblock Global weak solutions in a three-dimensional
  chemotaxis--{N}avier-{S}tokes system.
\newblock {\em Ann. Inst. H. Poincar\'{e} Anal. Non Lin\'{e}aire},
  33(5):1329--1352, 2016.

\bibitem{Winkler17}
M.~Winkler.
\newblock How far do chemotaxis-driven forces influence regularity in the
  {N}avier-{S}tokes system?
\newblock {\em Trans. Amer. Math. Soc.}, 369(5):3067--3125, 2017.

\bibitem{Winkler20}
M.~Winkler.
\newblock Small-mass solutions in the two-dimensional {K}eller-{S}egel system
  coupled to the {N}avier-{S}tokes equations.
\newblock {\em SIAM J. Math. Anal.}, 52(2):2041--2080, 2020.

\bibitem{Winkler21}
M.~Winkler.
\newblock Does {Leray's} structure theorem withstand buoyancy-driven
  chemotaxis-fluid interaction?
\newblock {\em J. Eur. Math. Soc. (JEMS)}, 2021.

\end{thebibliography}
\end{document}